\newcommand{\msc}[2][2000]{%
  \let\@oldtitle\@title%
  \gdef\@title{\@oldtitle\footnotetext{#1 \emph{Mathematics subject
        classification.} #2}}%
}
\theoremstyle{plain}
\newtheorem{theorem}{Theorem}[section]
\newtheorem{definition}[theorem]{Definition}
\newtheorem{lemma}[theorem]{Lemma}
\newtheorem{proposition}[theorem]{Proposition}
\theoremstyle{remark}
\newtheorem{remark}[theorem]{Remark}
\newtheorem{example}[theorem]{Example}
\def\C{{\mathbf C}}
\def\R{{\mathbf R}}
\def\N{{\mathbf N}}
\def\Z{{\mathbf Z}}
\def\T{{\mathbf T}}
\def\H{{\mathcal H}}
\def\Sch{{\mathcal S}}
\def\F{\mathcal F}
\def\({\left(}
\def\){\right)}
\def\<{\left\langle}
\def\>{\right\rangle}
\def\le{\leqslant}
\def\ge{\geqslant}
\def\Tend#1#2{\mathop{\longrightarrow}\limits_{#1\rightarrow#2}}
\def\d{{\partial}}
\def\eps{\varepsilon}
\def\l{\lambda}
\def\si{{\sigma}}
\DeclareMathOperator{\RE}{Re}
\DeclareMathOperator{\IM}{Im}
\DeclareMathOperator{\diver}{div}
\numberwithin{equation}{section}
\begin{document}

\title[Scattering for 
  NLS  under partial confinement]{Scattering for 
  nonlinear Schr\"odinger equation 
  under partial harmonic confinement}
\author[P. Antonelli]{Paolo Antonelli}
\address{Centro di Ricerca Matematica Ennio De Giorgi\\
Piazza dei Cavalieri, 3\\
56100 Pisa, Italy}
\email{paolo.antonelli@sns.it}
\author[R. Carles]{R\'emi Carles}
\address{CNRS \& Univ. Montpellier~2\\Math\'ematiques
\\CC~051\\34095 Montpellier\\ France}
\email{Remi.Carles@math.cnrs.fr}
\author[J. Drumond Silva]{Jorge Drumond Silva}
\address{Center for Mathematical Analysis, Geometry and Dynamical
  Systems\\ Departamento de Matem\'atica\\ Instituto Superior
  T\'ecnico\\ Universidade de Lisboa\\ Av. Rovisco Pais\\ 1049-001 Lisboa\\Portugal}
\email{jsilva@math.ist.utl.pt}
\begin{abstract}
We consider the nonlinear Schr\"odinger equation under a partial
quadratic confinement. We show that the global dispersion
corresponding to the direction(s) with no potential is enough to prove
global in time Strichartz estimates, from which we infer the existence of wave
operators thanks to suitable vector-fields. Conversely, given an
initial Cauchy datum, the solution is 
global in time and asymptotically free, provided that confinement affects
one spatial direction only. This stems from anisotropic Morawetz
estimates,  involving a marginal of the position density.  
\end{abstract}
\thanks{2000 \emph{Mathematics Subject Classification.} Primary:
  35Q55. Secondary: 35B40, 35P25, 81U30.\\
This work was supported by the French ANR project SchEq
  (ANR-12-JS01-0005-01). J. Drumond Silva was partially supported by
  the Center for Mathematical Analysis, Geometry and Dynamical
  Systems-LARSys through the Funda\c{c}\~ao para a Ci\^encia e
  Tecnologia (FCT/Portugal) Program POCTI/FEDER} 
\maketitle

\section{Introduction}
\label{sec:intro}

\subsection{Motivation}
\label{sec:motivation}

It is well known that the solution to 
\begin{equation}
  \label{eq:LS}
  i\d_t u +\frac{1}{2}\Delta u = 0,\quad (t,x)\in \R\times \R^d,
\end{equation}
with $u_{\mid t=0}\in \Sch(\R^d)$, has large time dispersive properties which
allow, for instance, to develop scattering theories both in the
presence of a linear perturbation and in a nonlinear setting (see
e.g. \cite{CazCourant,DG,Ginibre,Yafaev}). On the other hand, in the
presence of an (isotropic) harmonic potential,
\begin{equation}
  \label{eq:LSP}
  i\d_t u +\frac{1}{2}\Delta u = \frac{|x|^2}{2}u,\quad (t,x)\in
  \R\times \R^d, 
\end{equation}
the solution is periodic in time. This is a
consequence of the existence of a Hilbert basis for the harmonic
oscillator $H_0 = -\frac{1}{2}\Delta +\frac{|x|^2}{2}$ (see
e.g. \cite{LandauQ}):  
the pure point spectrum is 
\begin{equation*}
    \si_p(H_0)=\left\{ \frac{d}{2}+k=:\lambda_k \ ;\ k\in \N\right\},
  \end{equation*}
and the associated eigenfunctions are given by (tensor products of)
Hermite functions (denoted by $\psi_{jk}$, associated to $\lambda_k$),
which form a basis of $L^2(\R^d)$. So if  
\begin{equation*}
  u(0,x)= \sum_{j,k\in \N}\alpha_{jk}\psi_{jk}(x), 
\end{equation*}
then for all $t\in \R$,
\begin{equation*}
   u(t,x)= \sum_{j,k\in \N}\alpha_{jk}\psi_{jk}(x)e^{-i\(\frac{d}{2}+k\) t} . 
\end{equation*}
Perturbations of \eqref{eq:LSP} rather lead to KAM type results, in
the linear or in the nonlinear setting (see
e.g. \cite{GrTh11,Ku93}). In this paper, we study the large time behavior
of solutions to a nonlinear perturbation of a mixture of the above two
linear dynamics:
\begin{equation}
  \label{eq:NLSP}
  i\d_t u = Hu +\lambda |u|^{2\si}u, \quad (x,y)\in \R^n\times
  \R^{d-n},\quad H= -\frac{1}{2}\Delta_x
  +\frac{|x|^2}{2}-\frac{1}{2}\Delta_y, 
\end{equation}
with $d\ge 2$, $1\le n\le d-1$, $\lambda\in \C$ and $\si>0$. We show that if
the power of the 
nonlinearity is sufficiently large, then a scattering theory is
available: if $f(z)=|z|^{2\si}z$ is short range in space dimension
$d-n$, then the nonlinearity is negligible for large time. In the rest
of the introduction, we make this statement more precise. 
\smallbreak

We emphasize also the fact that the harmonic potential in
\eqref{eq:NLSP} corresponds to the standard modelling for magnetic
traps in the contexts of Bose-Einstein condensation (see
e.g. \cite{JosserandPomeau,PiSt}). So for instance, our result shows
that turning off the confinement in some of the directions may
suffice for the condensate to evolve asymptotically freely, and that
turning it off in all directions is not necessary: it is possible to
keep some properties of the linear confinement, associated to the
linear Hamiltonian $H$. 
 
\subsection{A quick review on scattering for NLS}
\label{sec:review}

Consider
\begin{equation}
  \label{eq:NLS}
   i\d_t u +\frac{1}{2}\Delta u= \lambda |u|^{2\si}u, \quad x\in \R^d,
\end{equation}
and introduce the space 
\begin{equation*}
  \Sigma = \left\{ f\in L^2(\R^d)\ ;\ \| f \|_{\Sigma}:=
    \left\lVert x 
      f\right\rVert_{L^2(\R^d)}+ \left\lVert \nabla 
      f\right\rVert_{L^2(\R^d)}<\infty\right\}. 
\end{equation*}
The following statement summarizes several results which can be found in
e.g. \cite{CazCourant,Ginibre}. 
\begin{theorem}\label{theo:NLS}
  Let $d\ge 1$, $\lambda\in \C$, $\si>0$ with $\si<2/(d-2)$ if $d\ge 3$. \\
$1.$ \emph{Existence of wave operators in $H^1$}. Suppose that $\si\ge
2/d$. Let $u_-\in
H^1(\R^d)$. There exist
$T>0$, depending on $|\lambda|$, $d$, $\sigma$ and $\|u_-\|_{H^1}$,
and a unique solution $u\in C((-\infty,-T];H^1)$ to \eqref{eq:NLS}  
such that
\begin{equation*}
  \left\| u(t) - e^{i\frac{t}{2}\Delta} u_-\right\|_{H^1}=
\left\|e^{-i\frac{t}{2}\Delta} u(t) - u_-\right\|_{H^1}\Tend t {-\infty}0.
\end{equation*}
$2.$ \emph{Existence of wave operators in $\Sigma$}. Let $u_-\in
\Sigma$. There exist
$T>0$, depending on $|\lambda|$, $d$, $\sigma$ and $\|u_-\|_\Sigma$,
and a unique solution $u\in C((-\infty,-T];\Sigma)$ to \eqref{eq:NLS}  
such that
\begin{equation*}
  \left\|e^{-i\frac{t}{2}\Delta} u(t) - u_-\right\|_{\Sigma}\Tend t {-\infty}0,
\end{equation*}
under the following assumption on $\si$:
\begin{equation*}
  \si>1 \text{ if } d=1,\quad \si>\frac{2}{d+2} \text{ if } d\ge 2. 
\end{equation*}
$3.$ \emph{Asymptotic completeness in $H^1$}. Suppose that $\lambda\in
\R$, with $\lambda \ge 0$, and $\si>2/d$. For all $u_0\in H^1(\R^d)$,
there exist a unique $u\in C(\R;H^1)$ solution to \eqref{eq:NLS} with
$u_{\mid t=0}=u_0$, and a unique $u_+\in H^1(\R^d)$ such that
\begin{equation*}
  \left\| u(t) - e^{i\frac{t}{2}\Delta} u_+\right\|_{H^1}=
\left\|e^{-i\frac{t}{2}\Delta} u(t) - u_+\right\|_{H^1}\Tend t {+\infty}0.
\end{equation*}
$4.$ \emph{Asymptotic completeness in $\Sigma$}. 
Suppose that $\lambda \ge 0$, and
$\si\ge \frac{2-d+\sqrt{d^2+12d+4}}{4d}$. For all $u_0\in \Sigma$, 
there exist a unique $u\in C(\R;\Sigma)$ solution to \eqref{eq:NLS} with
$u_{\mid t=0}=u_0$, and a unique $u_+\in \Sigma$ such that
\begin{equation*}
  \left\|e^{-i\frac{t}{2}\Delta} u(t) - u_+\right\|_{\Sigma}\Tend t {+\infty}0.
\end{equation*}
$5.$ \emph{Asymptotic completeness in $\Sigma$ for small data}. 
Let $\lambda \in \R$. Suppose that 
\begin{equation*}
  \si>1 \text{ if } d=1,\quad \si>\frac{2}{d+2} \text{ if } d\ge 2, 
\end{equation*}
and $\lambda \ge 0$ if $\si\ge 2/d$.  
If $\|u_0\|_\Sigma$ is sufficiently small,
there exist a unique $u\in C(\R;\Sigma)$ solution to \eqref{eq:NLS} with
$u_{\mid t=0}=u_0$, and a unique $u_+\in \Sigma$ such that
\begin{equation*}
  \left\|e^{-i\frac{t}{2}\Delta} u(t) - u_+\right\|_{\Sigma}\Tend t {+\infty}0.
\end{equation*}
$6.$ \emph{Existence of long range effects.} Suppose $\si\le 1/d$. If
$u\in C([T,\infty);L^2(\R^d)$ solution to \eqref{eq:NLS} and $u_+\in
L^2(\R^d)$ satisfy
\begin{equation*}
  \left\| u(t) - e^{i\frac{t}{2}\Delta} u_+\right\|_{L^2}=
\left\|e^{-i\frac{t}{2}\Delta} u(t) - u_+\right\|_{L^2}\Tend t {+\infty}0,
\end{equation*}
then necessarily, $u=u_+=0$. 
\end{theorem}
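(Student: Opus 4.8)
The plan is to argue by contradiction: assume $u_+\neq 0$ (and $\lambda\neq 0$, since for $\lambda=0$ the assertion fails). Write $U(t)=e^{\frac{it}{2}\Delta}$. Pairing Duhamel's integral formula for $u$ with $U(t)\phi$, for a fixed $\phi\in\Sch(\R^d)$, and using $\langle U(-t)v,\phi\rangle=\langle v,U(t)\phi\rangle$, one gets
\[
\langle u(t),U(t)\phi\rangle=\langle u(T),U(T)\phi\rangle-i\lambda\int_T^t\big\langle |u(s)|^{2\sigma}u(s),U(s)\phi\big\rangle\,ds .
\]
Since $\|u(t)-U(t)u_+\|_{L^2}\to0$, the left-hand side tends to $\langle u_+,\phi\rangle$ as $t\to+\infty$; hence the integral $\int_T^{+\infty}\langle |u(s)|^{2\sigma}u(s),U(s)\phi\rangle\,ds$ is \emph{finite}. (The pairings are well defined for a.e.\ $s$ because a solution that scatters in $L^2$ lies, near $t=+\infty$, in $L^q\big([t,\infty);L^{2\sigma+2}\big)$ with vanishing tail, for the $L^2$-admissible exponent $q=4(\sigma+1)/(d\sigma)$; this is the standard $L^2$ Cauchy and scattering theory, available since $\sigma\le 1/d<2/d$.)

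Now choose $\phi$ cleverly. The function $g:=|\widehat{u_+}|^{2\sigma}\widehat{u_+}$ is not identically zero, so one may pick $\widehat{\phi}\in C_c^\infty(\R^d)$, up to multiplication by a unimodular constant, with
\[
K:=\int_{\R^d}|\widehat{u_+}(\xi)|^{2\sigma}\widehat{u_+}(\xi)\,\overline{\widehat{\phi}(\xi)}\,d\xi>0 .
\]
By scattering, $u(s)$ is $L^2$-close to $U(s)u_+$ for large $s$, and the stationary-phase asymptotics $U(s)f\sim(is)^{-d/2}e^{i|x|^2/(2s)}\widehat f(x/s)$ (valid in $L^2$ for $f\in L^2$, with an $O(1/s)$ remainder in $L^2\cap L^\infty$ when $f\in\Sch$) give, after the change of variables $\xi=x/s$, a leading contribution $Ks^{-d\sigma}$ to $\langle |u(s)|^{2\sigma}u(s),U(s)\phi\rangle$. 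Every error term separating $\langle |u(s)|^{2\sigma}u(s),U(s)\phi\rangle$ from $Ks^{-d\sigma}$ is a product of a quantity tending to $0$ ($\|u(s)-U(s)u_+\|_{L^2}$, or an asymptotic remainder) with a power of $U(s)\phi$ in some $L^p$, $p>2$, which the dispersive estimate bounds by $\lesssim s^{-d\sigma}$ (or faster); so these errors are $o(s^{-d\sigma})$. Consequently
\[
\big\langle |u(s)|^{2\sigma}u(s),U(s)\phi\big\rangle=K\,s^{-d\sigma}\big(1+o(1)\big)\quad\text{as }s\to+\infty ,
\]
and in particular $\RE\big\langle |u(s)|^{2\sigma}u(s),U(s)\phi\big\rangle\ge\tfrac K2\,s^{-d\sigma}$ for all large $s$.

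Since $\sigma\le 1/d$, i.e.\ $d\sigma\le1$, we have $\int^{+\infty}s^{-d\sigma}\,ds=+\infty$; together with the previous step this forces $\RE\int_T^t\langle |u(s)|^{2\sigma}u(s),U(s)\phi\rangle\,ds\to+\infty$ as $t\to+\infty$, contradicting finiteness. Hence $u_+=0$. Feeding $u_+=0$ into the Duhamel representation from $+\infty$, $u$ satisfies $u(t)=-i\lambda\int_t^{+\infty}U(t-s)|u(s)|^{2\sigma}u(s)\,ds$ on $[T',\infty)$ with small Strichartz norm for $T'$ large; uniqueness in the small-data theory yields $u\equiv0$ on $[T',\infty)$, and backward uniqueness for the $L^2$ Cauchy problem (valid since $\sigma<2/d$) propagates this to all of $[T,\infty)$. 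Therefore $u=u_+=0$.

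The conceptual core — a long-range nonlinearity ($d\sigma\le1$) cannot remain integrable in time along a nontrivial free profile — is short; the real difficulty lies in the error estimates of the second step, namely carrying the dispersive lower bound through the nonlinearity while the scattering remainder $u(s)-U(s)u_+$ is controlled only in $L^2$, not in $L^{2\sigma+2}$. This is exactly what dictates testing against a \emph{Schwartz} profile $\phi$ (so that every ``bad'' $L^p$ norm, $p\neq2$, falls on $U(s)\phi$, which enjoys the full dispersive decay) with $\widehat{\phi}$ arranged so that $K>0$; it also makes the endpoint $\sigma=1/d$ the delicate case, since there one needs the error to be genuinely $o(1/s)$, which the $O(1/s)$ precision of the stationary-phase remainder for $\phi\in\Sch$ just provides. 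A modest amount of Strichartz bookkeeping is needed to justify the pairings and, in the case $d=1$ with $\sigma>1/2$, to give a meaning to the nonlinearity at the $L^2$ level.
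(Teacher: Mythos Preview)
Your proposal addresses only part~6. The paper does not prove Theorem~\ref{theo:NLS} at all: it is stated as a summary of known results, and part~6 is credited to Barab. Your argument is precisely the classical one --- pair Duhamel's formula against $U(t)\phi$ for a Schwartz $\phi$ with $\widehat\phi$ chosen so that $K>0$, use the $MDFM$ asymptotics of $U(s)$ to extract the leading term $Ks^{-d\sigma}$, and obtain a contradiction from $\int^{\infty}s^{-d\sigma}\,ds=\infty$ when $d\sigma\le 1$. The Appendix of the paper carries out exactly this formal computation (adapted to the partial-confinement Hamiltonian $H$, testing against a tensor product $\psi_1\otimes\psi_2$), so your route matches both the cited source and the paper's own heuristics.

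Two remarks. First, once $u_+=0$, for real $\lambda$ the conclusion $u\equiv0$ follows in one line from mass conservation: $\|u(t)\|_{L^2}\to\|u_+\|_{L^2}=0$ while $\|u(t)\|_{L^2}$ is constant on $[T,\infty)$. This is how the paper's Appendix closes the argument. Your Duhamel-from-infinity route is correct and is what one needs if $\lambda$ is genuinely complex (mass is then not conserved). Second, your caveats about the error control are on point. For $\sigma\le 1/2$ (hence all $d\ge 2$) the H\"older split placing $u-U(s)u_+$ in $L^2$, $|u|^{2\sigma}$ in $L^{1/\sigma}$, and $U(s)\phi$ in $L^{2/(1-2\sigma)}$ gives exactly the $o(s^{-d\sigma})$ you claim; the case $d=1$, $\sigma\in(1/2,1]$ genuinely requires the Strichartz input you mention, both to make sense of the nonlinearity at the $L^2$ level and to define $K$, since $|\widehat{u_+}|^{2\sigma}\widehat{u_+}$ need not be locally integrable for $u_+\in L^2$.
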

\begin{remark}
  Uniqueness is actually granted in smaller spaces than in the above
  statement, involving a mixed time-space norm which we have omitted
  to simplify the presentation. 
\end{remark}
\begin{remark}
  As pointed out in the above statement, the free group
  $e^{i\frac{t}{2}\Delta}$ is unitary on $H^1(\R^d)$, while it is not
  on $\Sigma$. 
\end{remark}
The above (positive) results all rely on (global in time) Strichartz
estimates. It is essentially the only tool to prove the first point,
while the second point uses the operator $x+it\nabla$, which provides
an explicit decay rate in time, 
to improve the assumption on $\si$. The third point relies on Morawetz
estimates (see also \cite{GiVe10,PlVe09}). The fourth point relaxes the
assumption of the third point on $\sigma$, since
\begin{equation*}
  \frac{2}{d}> \frac{2-d+\sqrt{d^2+12d+4}}{4d}>\frac{1}{d}. 
\end{equation*}
It relies on the pseudo-conformal evolution law, which provides a
rather simple relation concerning the time evolution of the $L^2$ norm
of $(x+it\nabla)u$. The fifth point is essentially the same as the
second one, plus a bootstrap argument. 
The final point means that if $\si\le 1/d$, linear
and nonlinear dynamics can no longer be (easily) compared, due to long
range effects. 

To be complete, we recall the above mentioned pseudo-conformal
evolution law: if $u$ solves \eqref{eq:NLS}, then 
\begin{align*}
  \frac{d}{dt}\( \frac{1}{2}\|(x+it\nabla_x)u\|_{L^2}^2 +
  \frac{\l t^2}{\si+1}\|u\|_{L^{2\si+2}}^{2\si+2}\)= \frac{\l
    t}{\si+1}(2-d\si) \|u\|_{L^{2\si+2}}^{2\si+2}. 
\end{align*}
This law was discovered in \cite{GV79Scatt}. A generalization to the
case of an external potential can be found in \cite{CazCourant}. Note
however that in the presence of an external potential, it seems that
the corresponding evolution law can be exploited only in the case
where the potential is \emph{exactly quadratic and isotropic}
(\cite{CaCCM}). In particular, in the case of a partial confinement as
in \eqref{eq:NLSP}, the law presented in \cite{CazCourant} seems
helpless as far as the description of large time behavior is
concerned. 
\subsection{Heuristics}
\label{sec:heu
ristics}

Leaving out all the technical aspects, scattering for \eqref{eq:NLS}
stems from the following remark. As $t\to +\infty$, we have
\begin{equation*}
  e^{i\frac{t}{2}\Delta }f(x)= \frac{1}{(it)^{d/2}} \widehat
    f\(\frac{x}{t}\)e^{i\frac{|x|^2}{2t}} +o(1)\text{ in }L^2(\R^d),
\end{equation*}
where the Fourier transform is normalized as 
\begin{equation*}
  {\mathcal F} f(\xi)=\widehat
  f(\xi)=\frac{1}{(2\pi)^{d/2}}\int_{\R^d}f(x)e^{-ix\cdot \xi} dx.
\end{equation*}
Assuming that the solution to \eqref{eq:NLS} behaves as the free
evolution of a scattering state $u_+$ as $t\to +\infty$, the nonlinear
potential in \eqref{eq:NLS} satisfies
\begin{equation*}
  |u(t,x)|^{2\si} \approx \frac{1}{t^{d\si}} \left|\widehat
    u_+\(\frac{x}{t}\)\right|^{2\si}.
\end{equation*}
The function on the right hand side belongs to $L^1_tL^\infty_x$
provided that ($u_+$ is sufficiently localized and) $\si>1/d$. From
the linear scattering point of view \cite{DG}, this is consistent
with the fact that $u$ behaves asymptotically like a solution to the
free equation \eqref{eq:LS}. In the case $\si\le 1/d$, the (nonlinear)
potential remains relevant at leading order for all time, no matter
how large, as proved in \cite{Barab}; long range effects were
described for the first time in \cite{Ozawa91} in the case of
\eqref{eq:NLS}. Note however that except in the cases $d=1$ and $d=2$,
Theorem~\ref{theo:NLS} introduces restrictions on the lower bound for
$\si$ which are not intuitive, and are likely to be ``only''
technical. 
\smallbreak

In the case of \eqref{eq:NLSP}, the $x$-part is not expected to yield
large time dispersion, while the $y$-part should provide large time
dispersion. From this perspective, it seems natural to expect a
scattering theory for \eqref{eq:NLSP} with the same numerology as in
Theorem~\ref{theo:NLS}, with $d$ replaced by $d-n$ as far as lower
bounds on $\si$ are concerned. 
\smallbreak

Such a problem is to be compared with the approach in  
\cite{TzVi12}. The authors study \eqref{eq:NLSP} where,
instead of considering the harmonic oscillator on $\R^n$, they
examine the Laplacian on a compact manifold without boundary. Also,
in \cite{HaPa13}, Z.~Hani and B.~Pausader consider a similar problem
in the particular case 
where $\lambda=1$, $\si=2$ (quintic, defocusing nonlinearity), $x\in
\T^2$ ($n=2$) and $d=3$. This problem is in addition both $L^2$ and
energy-critical, an aspect which introduces new difficulties compared
to \eqref{eq:NLSP} as it is treated in the present paper. In both
\cite{HaPa13} and \cite{TzVi12},
the nonlinearity in supposed to be short range in the $y$ variable so
that scattering in $H^1$ can be expected,
$\si\ge 2/(d-n)$, and small data scattering is established: for small
initial data (in spaces that we do not describe here), the solution to
the nonlinear equation is global in time, and behaves for large time
like a solution to the linear equation. 

More recently, in \cite{PTV-p}, the authors have considered the issue
of asymptotic completeness for the nonlinear Schr\"odinger equation
posed on $\R^N\times \T$, a case which corresponds to $n=1$ for
\eqref{eq:NLSP}. They prove that a scattering theory is available with
the same numerology as on $\R^N$, provided that the nonlinearity in
energy-subcritical in total space dimension $N+1$. Such a result is
qualitatively very  similar to our Theorem~\ref{theo:ac}. 
\smallbreak

Note that like with $H$ in
\eqref{eq:NLSP}, the dynamics in the $x$ variable prevents the
free dynamics from being as trivial as in the case of \eqref{eq:LS}. 
We will return to a comparison with these papers on a more
technical level in Sections~\ref{sec:product} and \ref{sec:strichartz}. 

\subsection{Main results}
\label{sec:main}
Note that the definition of $\Sigma$ has to be adapted to the present
notations:
\begin{equation*}
  \Sigma = \left\{ f\in L^2(\R^d)\ ;\ \| f \|_{\Sigma}:=
    \left\lVert x 
      f\right\rVert_{L^2(\R^d)}+ \left\lVert y 
      f\right\rVert_{L^2(\R^d)}+ \left\lVert \nabla_{x,y} 
      f\right\rVert_{L^2(\R^d)}<\infty\right\}. 
\end{equation*}
In view of
 \cite[Theorem~1.8]{Ca11}, we have: 
\begin{proposition}\label{prop:Cauchy}
   Let $d\ge 2$, $1\le n\le d-1$, $\lambda \in \R$, and $\si>0$ with
  $\si<2/(d-2)$ if $d\ge 3$. If $\si\ge 2/d$, assume in addition that
  $\lambda\ge 0$. For all $u_0\in \Sigma$, \eqref{eq:NLSP}
  has a unique solution 
  \begin{equation*}
    u\in C(\R;\Sigma)\cap L^{(4\si+4)/(d\si)}_{\rm
      loc}\(\R;W^{1,2\si+2}(\R^d)\) 
  \end{equation*}
such that $u_{\mid
    t=0}=u_0$. The following conservations hold:
  \begin{equation*}
    \frac{d}{dt}\|u(t)\|_{L^2(\R^d)}^2=\frac{d}{dt}\(\<u(t),Hu(t)\>
+\frac{\lambda}{\si+1}\|u(t)\|_{L^{2\si+2}(\R^d)}^{2\si+2} \)=0.
  \end{equation*}
\end{proposition}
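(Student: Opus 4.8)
The plan is to obtain Proposition~\ref{prop:Cauchy} as a particular case of \cite[Theorem~1.8]{Ca11}, which treats nonlinear Schr\"odinger equations with a smooth potential that is at most quadratic in all variables; since $\tfrac{|x|^2}{2}$ is such a potential on $\R^d=\R^n_x\times\R^{d-n}_y$, that theorem applies directly. For the record, here is the route one would follow. The first step is to observe that, on bounded time intervals, $e^{-itH}$ enjoys the same Strichartz estimates as the free group $e^{i\frac{t}{2}\Delta}$ on $\R^d$: writing $H=H_x-\tfrac12\Delta_y$ with $H_x=-\tfrac12\Delta_x+\tfrac{|x|^2}{2}$ on $\R^n$, one has $e^{-itH}=e^{-itH_x}\otimes e^{i\frac{t}{2}\Delta_y}$, and the Mehler formula exhibits $e^{-itH_x}$ as an oscillatory integral operator whose kernel has modulus $(2\pi|\sin t|)^{-n/2}$; hence, for $|t|$ small (more generally, for $t$ at a fixed positive distance from $\pi\Z$), $\|e^{-itH}\|_{L^1\to L^\infty}\lesssim|t|^{-d/2}$, and the $TT^*$ method yields local-in-time Strichartz estimates with the standard $\R^d$ admissibility.

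Next I would set up the Cauchy theory. Since $\si<2/(d-2)$, the nonlinearity is energy-subcritical, so a fixed-point argument in $C([-T,T];H^1)\cap L^{(4\si+4)/(d\si)}([-T,T];W^{1,2\si+2})$ --- based on these Strichartz estimates together with H\"older's inequality in space and time and the chain rule for $\nabla(|u|^{2\si}u)$ --- produces a unique local solution with $T=T(\|u_0\|_{H^1})$. To propagate the weights $x$ and $y$, I would commute the equation with operators adapted to $H$: $\nabla_y$ and $J_y(t):=y+it\nabla_y$ commute with $i\d_t-H$ exactly as in the free case, while in the confined variables one uses $A(t):=e^{-itH_x}xe^{itH_x}=\cos(t)\,x+i\sin(t)\,\nabla_x$ and $B(t):=e^{-itH_x}\nabla_x e^{itH_x}=\cos(t)\,\nabla_x+i\sin(t)\,x$, which also commute with $i\d_t-H$; running the same Strichartz/H\"older scheme for $J_yu$, $Au$ and $Bu$ (where a gauge-transform argument reduces the nonlinear source to lower-order terms, $f(z)=|z|^{2\si}z$ being $C^1$) shows that the $\Sigma$ norm is propagated on $[-T,T]$. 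Uniqueness in $C(\R;\Sigma)$ then follows from unconditional uniqueness in the Strichartz class.

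For the global statement I would invoke the conservation laws. Pairing the equation with $\bar u$ and taking imaginary parts gives $\tfrac{d}{dt}\|u(t)\|_{L^2}^2=0$; differentiating $E(u):=\<u,Hu\>+\tfrac{\l}{\si+1}\|u\|_{L^{2\si+2}}^{2\si+2}$ along the flow gives $\tfrac{d}{dt}E(u(t))=0$ --- legitimate on the $\Sigma$-valued solution, and otherwise justified by regularizing the data and passing to the limit with the uniqueness just established. If $\l\ge0$, then $\<u(t),Hu(t)\>\le E(u_0)$, which together with mass conservation bounds $\|\nabla u(t)\|_{L^2}$ and $\|xu(t)\|_{L^2}$ uniformly in $t$; if instead $\si<2/d$ (so that $\l$ is unrestricted), the same bounds follow from mass and energy conservation via the Gagliardo--Nirenberg and Young inequalities. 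It remains to control the anisotropic weight $\|yu(t)\|_{L^2}$, which the conserved quantities do \emph{not} bound: since the potential acts on $x$ only, one computes $\tfrac{d}{dt}\|yu(t)\|_{L^2}^2=2\,\IM\<yu(t),\nabla_y u(t)\>$ --- precisely as for the free Schr\"odinger equation --- hence $\tfrac{d}{dt}\|yu(t)\|_{L^2}\le\|\nabla_y u(t)\|_{L^2}$, and the bounds just obtained make $\|yu(t)\|_{L^2}$ finite on every bounded interval. Altogether the $\Sigma$ norm of $u(t)$ remains finite on bounded time intervals, so the local solution is global in $t$. Apart from the local-in-time Strichartz estimates of the first step --- the one place where the structure of $H$ genuinely enters --- the proof is a routine adaptation of the $\R^d$ theory; the only slightly delicate point is that the anisotropy of $\Sigma$ forces the $x$-weights (controlled by energy conservation, whence the hypothesis $\l\ge0$ or $\si<2/d$) and the $y$-weights (controlled by the above differential inequality) to be handled by different mechanisms.
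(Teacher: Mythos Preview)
Your proposal is correct and matches the paper's own treatment: the paper does not give an independent proof of this proposition but simply refers to \cite[Theorem~1.8]{Ca11}, exactly as you do in your first sentence. The detailed sketch you add (local Strichartz via Mehler, fixed point in the energy-subcritical Strichartz space, propagation of the $\Sigma$ weights through the commuting vector fields $A_2,A_1,\nabla_y,J_y$, conservation of mass and energy for globalization, and the separate differential inequality for $\|y u\|_{L^2}$) is a faithful outline of that argument and is consistent with the tools the paper itself develops in Section~\ref{sec:Z}.
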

Our first result regarding scattering theory is the existence of wave
operators, along with asymptotic completeness for small data:
\begin{theorem}\label{theo:wave}
  Let $d\ge 2$, $1\le n\le d-1$, $\lambda\in \C$, and $\si>0$ with
  $\si<2/(d-2)$ if $d\ge 3$. Suppose in addition that
  \begin{equation}
    \label{eq:sigmawave}
    \si>\frac{2d}{d+2}\frac{1}{d-n}.
  \end{equation}
$\bullet$ \emph{Existence of wave operators}. Let $u_-\in
\Sigma$. There exist
$T>0$, depending on $|\lambda|$, $d$, $n$, $\sigma$ and $\|u_-\|_\Sigma$,
and a unique solution $u\in C((-\infty,-T];\Sigma)$ to \eqref{eq:NLSP}  
such that
\begin{equation*}
  \left\|e^{it H} u(t) - u_-\right\|_{\Sigma}\Tend t {-\infty}0.
\end{equation*}
$\bullet$ \emph{Asymptotic completeness for small data}. Let $u_0\in
\Sigma$ and $u$ 
be the solution provided by Proposition~\ref{prop:Cauchy}. If
$\|u_0\|_\Sigma$ is sufficiently small, there exists a unique
$u_+\in \Sigma$ such that
\begin{equation*}
   \left\|e^{it H} u(t) - u_+\right\|_{\Sigma}\Tend t {+\infty}0. 
 \end{equation*}
\end{theorem}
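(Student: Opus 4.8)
The plan is to combine global-in-time Strichartz estimates for $e^{-itH}$ that exploit \emph{only} the dispersion in the unconfined variables $y$ with a family of first order operators commuting with $i\d_t-H$, which will both propagate the $\Sigma$-regularity and manufacture the time decay needed to treat the nonlinearity as short range. Since $H=H_x-\tfrac12\Delta_y$ with $H_x=-\tfrac12\Delta_x+\tfrac12|x|^2$ acts on separate variables, $e^{-itH}=e^{-itH_x}\otimes e^{\frac{it}{2}\Delta_y}$, and $e^{-itH_x}$ is unitary on $L^2_x$; hence the free dispersive bound in dimension $d-n$ gives $\|e^{-itH}\varphi\|_{L^\infty_yL^2_x}\lesssim|t|^{-(d-n)/2}\|\varphi\|_{L^1_yL^2_x}$. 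Interpolating with the trivial $L^2$-bound and running the $TT^*$ argument together with the Christ--Kiselev lemma yields global Strichartz estimates in the mixed spaces $L^q_tL^r_yL^2_x$ for every pair $(q,r)$ admissible in dimension $d-n$ (endpoint included when $d-n\ge3$), with the inhomogeneous version where the source is measured in a dual space $L^{\tilde q'}_tL^{\tilde r'}_yL^2_x$; one may upgrade $L^2_x$ to $H^1_x$ throughout.

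Next, looking for operators $a(t)x+b(t)\nabla_x$ commuting with $i\d_t-H$ forces $a''=-a$, which produces $A_1(t)=\cos t\,x+i\sin t\,\nabla_x$ and $A_2(t)=\sin t\,x-i\cos t\,\nabla_x$; at every time $x$ and $\nabla_x$ are combinations of $A_1,A_2$ with coefficients bounded by one, so controlling $A_1u,A_2u$ in $L^2$ is equivalent to controlling $xu,\nabla_xu$ in $L^2$. The analogous computation in $y$ gives $\nabla_y$ and the Galilei operator $G(t)=y+it\nabla_y$, with $y=G(t)-it\nabla_y$. All of $A_1,A_2,\nabla_y,G$ commute with $i\d_t-H$, so they send solutions of \eqref{eq:NLSP} to solutions with a linearized source; a pointwise chain rule gives, uniformly in $t$, $|A_j(|u|^{2\sigma}u)|\lesssim|u|^{2\sigma}(|A_1u|+|A_2u|)$ and $|\nabla_y(|u|^{2\sigma}u)|+|G(t)(|u|^{2\sigma}u)|\lesssim|u|^{2\sigma}(|\nabla_yu|+|G(t)u|)$. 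Finally, conjugation by $e^{itH}$ yields $e^{itH}A_1(t)e^{-itH}=x$, $e^{itH}A_2(t)e^{-itH}=-i\nabla_x$, $e^{itH}G(t)e^{-itH}=y$, $e^{itH}\nabla_ye^{-itH}=\nabla_y$; since $e^{itH}$ is $L^2$-unitary (but \emph{not} $\Sigma$-unitary), the convergence $\|e^{itH}u(t)-u_\pm\|_\Sigma\to0$ is equivalent to the $L^2$-convergence of $u,A_1u,A_2u,\nabla_yu,G(t)u$ to the $e^{-itH}$-evolutions of $u_\pm,xu_\pm,-i\nabla_xu_\pm,\nabla_yu_\pm,yu_\pm$ respectively — all statements about $\Sigma$ will be phrased this way.

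For the existence of wave operators, on $I_T=(-\infty,-T]$ one solves by contraction
\begin{equation*}
  u(t)=e^{-itH}u_--i\lambda\int_{-\infty}^{t}e^{-i(t-s)H}\bigl(|u|^{2\sigma}u\bigr)(s)\,\dd s
\end{equation*}
in a small ball of a complete space whose norm controls $u,A_1u,A_2u,\nabla_yu,G(t)u$ in a finite intersection of Strichartz spaces $L^q_tL^r_yL^2_x$ over $I_T$ (so $\|u(t)\|_\Sigma$ is bounded there). The homogeneous terms are estimated by the previous two steps: $\|A_je^{-itH}u_-\|_{L^q_tL^r_yL^2_x}\lesssim\|A_j(0)u_-\|_{L^2}\lesssim\|u_-\|_\Sigma$, and likewise for $\nabla_y,G$. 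For the Duhamel term, the inhomogeneous Strichartz estimate and the chain rule reduce everything to bounding $\bigl\||u|^{2\sigma}(|A_1u|+|A_2u|+|\nabla_yu|+|G(t)u|)\bigr\|_{L^{\tilde q'}_tL^{\tilde r'}_yL^2_x}$. One distributes the $2\sigma$ factors of $u$ by H\"older in $x$ and $y$: in $x$ only $H^1_x$-control is available, so the Sobolev embedding $H^1_x(\R^n)\hookrightarrow L^p_x$ constrains the $x$-exponents; in $y$ one trades integrability for decay through the Gagliardo--Nirenberg inequality $\|v\|_{L^p_y}\lesssim|t|^{-(d-n)(\frac12-\frac1p)}\|v\|_{L^2_y}^{1-\theta}\|G(t)v\|_{L^2_y}^{\theta}$, $\theta=(d-n)(\tfrac12-\tfrac1p)$, the range $t\in[0,1]$ being instead handled with the $\nabla_y$-norm and the boundedness of the interval. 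The upshot is a bound by $|t|^{-\beta}$ times a power of the working norm, with $\beta>1$ precisely when \eqref{eq:sigmawave} holds, so integrating over $I_T$ contributes a factor tending to $0$ as $T\to\infty$; the map is then a contraction and the fixed point $u\in C(I_T;\Sigma)$ satisfies $u-e^{-itH}u_-\to0$ in the working norm, equivalently $\|e^{itH}u(t)-u_-\|_\Sigma\to0$. The solution is extended to all negative times by Proposition~\ref{prop:Cauchy}. For asymptotic completeness with small data, one runs the same estimates on $[0,\infty)$ with Duhamel based at $t=0$: smallness of $\|u_0\|_\Sigma$ closes the global bootstrap $\|u\|_{\mathrm{work}}\lesssim\|u_0\|_\Sigma+\|u\|_{\mathrm{work}}^{2\sigma+1}$, and finiteness of the global norms makes $u_+:=u_0-i\lambda\int_0^{+\infty}e^{isH}(|u|^{2\sigma}u)\,\dd s$ well defined in $\Sigma$ with $\|e^{itH}u(t)-u_+\|_\Sigma\to0$.

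The main obstacle is the nonlinear estimate in the third step: since both the Strichartz gain and the only available time decay come from the $d-n$ unconfined directions, while the confined variable $x$ supplies no decay and just one derivative of smoothness, one must split the $L^p$-loss between $x$ and $y$ so as to keep the $H^1_x$ Sobolev embedding valid \emph{and} retain enough powers of $|t|^{-1}$ to be time-integrable — and reconciling these two demands is exactly what produces the threshold \eqref{eq:sigmawave}, which is genuinely stronger than the naive substitution $d\mapsto d-n$ in Theorem~\ref{theo:NLS}. A minor but pervasive point is that, because $e^{itH}$ fails to be an isometry on $\Sigma$, every $\Sigma$-estimate and every scattering limit must be routed through the commuting operators $A_1,A_2,\nabla_y,G(t)$ before one may invoke unitarity on $L^2$.
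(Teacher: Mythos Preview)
Your vector-field set-up is exactly the paper's: the operators $A_1,A_2,\nabla_y,G(t)$, their commutation with $i\d_t-H$, the chain rule on $|u|^{2\sigma}u$, and the reformulation of $\Sigma$-convergence via $e^{itH}A_j(t)e^{-itH}$ all match Lemma~\ref{lem:Z}. The architecture (fixed point on a Duhamel integral, then bootstrap for small data) is also the same.

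The substantive divergence is in \emph{which} Strichartz estimates you feed into the fixed point. You use the anisotropic ones in $L^q_tL^r_yL^2_x$ (the $TT^*$ argument you sketch is Proposition~\ref{prop:generalization}), and then try to estimate $|u|^{2\sigma}Bu$ by splitting $x$ and $y$: Sobolev $H^1_x\hookrightarrow L^p_x$ in the confined variable, and the $G(t)$-weighted Gagliardo--Nirenberg purely in $y$. This is where a gap appears. When $B\in\{A_1,A_2\}$ you only control $Bu$ in $L^2_x$, so H\"older in $x$ forces all of $|u|^{2\sigma}$ into $L^\infty_x$; this already fails for $n\ge2$ since $H^1(\R^n)\not\hookrightarrow L^\infty$. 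Even for $n=1$, after writing $\|u\|_{L^b_yL^\infty_x}\lesssim\|u\|_{L^{b_1}_yL^2_x}^{1/2}\|\nabla_xu\|_{L^{b_2}_yL^2_x}^{1/2}$, the $y$-only GN you invoke gives time decay for the first factor (it needs only $G(t)u\in L^2$), but for the second it would require $\nabla_y\bigl(\|\nabla_xu(\cdot,y)\|_{L^2_x}\bigr)\in L^2_y$ with a gain of $|t|^{-1}$, i.e.\ essentially $G(t)\nabla_xu\in L^2_{x,y}$ --- a second-order cross quantity that is \emph{not} available at the $\Sigma$ level. If instead you place $\nabla_xu$ in an admissible Strichartz norm (no extra decay), a direct computation shows the resulting threshold on $\sigma$ is strictly worse than \eqref{eq:sigmawave}; in particular the factor $\tfrac{2d}{d+2}$, which is the fingerprint of the $d$-dimensional Sobolev endpoint $k=\tfrac{2d}{d-2}$, never appears from a separated $x/y$ analysis. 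So the claim ``$\beta>1$ precisely when \eqref{eq:sigmawave} holds'' is not justified by your scheme.

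The paper avoids this by working in the \emph{isotropic-in-space} Strichartz scale $\ell^p_\gamma L^q(I_\gamma;L^r(\R^d))$ of Theorem~\ref{theo:strichartz} (whose proof exploits the time-periodicity of Mehler's kernel, not just unitarity of $e^{-itH_x}$). The nonlinear estimate is then a single H\"older inequality in $L^r(\R^d)$, and the required control of $\|u(t)\|_{L^k(\R^d)}$ comes from the weighted Gagliardo--Nirenberg inequality of Lemma~\ref{lem:Z},
\[
\|u(t)\|_{L^k(\R^d)}\lesssim \langle t\rangle^{-(d-n)(1/2-1/k)}\|u\|_{L^2}^{1-\delta}\Bigl(\sum_{j=1}^4\|A_j(t)u\|_{L^2}\Bigr)^{\delta},\qquad \delta=d\Bigl(\tfrac12-\tfrac1k\Bigr),
\]
which uses the four first-order operators $A_j$ \emph{separately} and never their products. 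Choosing $k=\tfrac{2d}{d-2}$ (for $d\ge3$) is exactly what yields the threshold \eqref{eq:sigmawave}. In short: keep your vector fields, but run the fixed point in $\ell^p_\gamma L^q(I_\gamma;L^r(\R^d))$ and replace the separated $x/y$ estimate by the $d$-dimensional weighted GN; then the numerology closes as stated.
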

\begin{remark}[Uniqueness]
  Uniqueness is stated rather loosely in the first point, for an extra
  property will be needed, which is too involved to be made precise
  here. See Section~\ref{sec:wave} for full details. 
\end{remark}
\begin{remark}[Lower bound on $\si$]
  The assumption \eqref{eq:sigmawave} is the expected one, except
  possibly for
  the factor $2d/(d+2)$. If $d=2$ (hence $n=1$), \eqref{eq:sigmawave}
  corresponds to the heuristics in the previous subsection. On the
  other hand if $d\ge 3$, we have $2d/(d+2)>1$, and the expected lower
  bound for $\si$ is not reached. Note that this factor is the same as
  in the second point in Theorem~\ref{theo:NLS}. It actually appears
  for the same technical reason as in the context of
  Theorem~\ref{theo:NLS}, as will be clear in Section~\ref{sec:wave}. 
\end{remark}
\begin{remark}[Values allowed for $n$]
  The assumptions of Theorem~\ref{theo:wave} always allow the value
  $n=1$ (one direction of confinement). The value $n=2$ is allowed for
  $d\ge 3$, since the range
  \begin{equation*}
    \frac{2d}{d+2}\frac{1}{d-2}<\si<\frac{2}{d-2}
  \end{equation*}
is non-empty. The value $n=3$ is allowed only if $d\ge 7$, and no
value $n\ge 4$ can be considered, since for $d\ge 5$,
\begin{equation*}
  \frac{2d}{d+2}\frac{1}{d-4}>\frac{2}{d-2}.
\end{equation*}
So in the physically relevant cases $d=2$ or $3$, any $n$ such that $1\le
n\le d-1$ is allowed. 
\end{remark}

Regarding asymptotic completeness for large data, the first remark is
that due to the 
anisotropy of the operator $H$, no analogue of the pseudo-conformal
evolution law, which is used in the proof of the fourth point in
Theorem~\ref{theo:NLS}, must be expected. 
Therefore, it is more
sensible to rely on Morawetz type
estimates, which we prove in a more general context than the framework
of \eqref{eq:NLSP} with a quadratic potential.

\begin{proposition}[Anisotropic Morawetz estimates]\label{prop:morawetz}
 Consider $H=-\frac12\Delta_x+V(x)-\frac12\Delta_y$, where $V$ is a
 real-valued potential depending only on $x\in\R^n$. Let  
$\lambda>0$ and assume that the solution $u$ to \eqref{eq:NLSP} with
$u_{\mid t=0}=u_0\in L^2(\R^d)$ exists globally, and satisfies
\begin{equation*}
\sup_{t\in\R}\|\nabla_yu(t)\|_{L^2(\R^d)}<\infty.
\end{equation*}
Then we have
\begin{equation}\label{eq:mor_mar}
\left\lVert |\nabla_y|^{\frac{3-(d-n)}{2}}R\right\rVert_{L^2_{t,
    y}(\R\times\R^{d-n})}^2\leq C\|u_0\|_{L^2(\R^d)}^3\sup_{t\in \R}
\|\nabla_yu(t)\|_{L^2(\R^d)},
\end{equation}
where 
\begin{equation*}
R(t, y):=\int_{\R^n}|u(t, x, y)|^2\,dx,
\end{equation*}
is the marginal of the mass density. 
\end{proposition}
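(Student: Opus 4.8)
The plan is to run an interaction (tensor) Morawetz argument \emph{purely in the $y$ variables}, with the marginal $R$ playing the role usually played by the full mass density $|u|^2$. The first step is to integrate over $x\in\R^n$ the local mass and $y$-momentum conservation laws associated with \eqref{eq:NLSP}. Writing $p_j=\IM(\bar u\,\d_{y_j}u)$ and $\mathbf m(t,y):=\int_{\R^n}\IM(\bar u\,\nabla_yu)(t,x,y)\,dx$, the $x$-divergence terms and the potential source $-|u|^2\,\d_{y_j}V$ both integrate to zero --- the latter precisely because $V=V(x)$ does not depend on $y$ --- which leaves the \emph{closed} system
\begin{gather*}
  \d_t R+\diver_y\mathbf m=0,\qquad \d_t\mathbf m_j+\d_{y_k}T_{jk}=0,\\
  T_{jk}(t,y)=\int_{\R^n}\RE(\d_{y_j}\bar u\,\d_{y_k}u)\,dx-\tfrac14\,\d_{y_j}\d_{y_k}R+\frac{\si\,\l}{\si+1}\,\delta_{jk}\int_{\R^n}|u|^{2\si+2}\,dx .
\end{gather*}
Two features of this system are decisive. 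Since $\l>0$, the nonlinear pressure $\tfrac{\si\l}{\si+1}\int_{\R^n}|u|^{2\si+2}\,dx$ is nonnegative. And, by the Cauchy--Schwarz inequality in the variable $x$ --- which plays the role of the pointwise inequality $\RE(\nabla\bar u\otimes\nabla u)\succeq|u|^{-2}\,\IM(\bar u\nabla u)\otimes\IM(\bar u\nabla u)$ used in the classical argument --- the kinetic part of $(T_{jk})$, namely $\int_{\R^n}\RE(\d_{y_j}\bar u\,\d_{y_k}u)\,dx$, dominates $\mathbf m\otimes\mathbf m/R$ as a quadratic form. In other words $(R,\mathbf m)$ behaves like the hydrodynamic pair of a \emph{defocusing} NLS in dimension $m:=d-n$, \emph{with no external potential}.

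Next I would introduce, with the convex weight $a(y)=|y|$ on $\R^m$, the Morawetz functional
\begin{equation*}
  M(t):=\int_{\R^m}\int_{\R^m}R(t,y)\,\nabla a(y-y')\cdot\mathbf m(t,y')\,dy\,dy' .
\end{equation*}
Because $|\nabla a|\le1$, conservation of mass ($\|R(t)\|_{L^1_y}=\|u(t)\|_{L^2}^2=\|u_0\|_{L^2}^2$) and Cauchy--Schwarz ($\|\mathbf m(t)\|_{L^1_y}\le\|u(t)\|_{L^2}\|\nabla_yu(t)\|_{L^2}=\|u_0\|_{L^2}\|\nabla_yu(t)\|_{L^2}$) give the a priori bound $|M(t)|\le\|u_0\|_{L^2}^3\sup_{t\in\R}\|\nabla_yu(t)\|_{L^2}$, finite by hypothesis. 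Differentiating $M$, substituting the two balance laws and integrating by parts in $y$ and $y'$ is the by-now-standard interaction-Morawetz computation: $D^2a\succeq0$ together with $T\succeq\mathbf m\otimes\mathbf m/R$ makes the momentum--momentum contribution sign-favourable, $\l>0$ makes the nonlinear one nonnegative, and the biharmonic term survives, so that
\begin{equation*}
  \dot M(t)\ \ge\ -\tfrac14\int_{\R^m}\int_{\R^m}R(t,y)\,(\Delta^2a)(y-y')\,R(t,y')\,dy\,dy'\ =\ c_m\,\big\||\nabla_y|^{\frac{3-m}{2}}R(t)\big\|_{L^2_y}^2 ,
\end{equation*}
with $c_m>0$ for every $m\ge1$ (an elementary Fourier computation, using that $\widehat{|\cdot|}$ is a \emph{negative} multiple of $|\xi|^{-m-1}$). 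Integrating over $t\in\R$ and invoking the bound on $M$,
\begin{equation*}
  c_m\int_\R\big\||\nabla_y|^{\frac{3-m}{2}}R(t)\big\|_{L^2_y}^2\,dt\ \le\ \int_\R\dot M(t)\,dt\ \le\ 2\sup_{t\in\R}|M(t)|\ \le\ 2\,\|u_0\|_{L^2}^3\sup_{t\in\R}\|\nabla_yu(t)\|_{L^2},
\end{equation*}
which is \eqref{eq:mor_mar} (with $C=2/c_m$).

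The main obstacle --- essentially the only place where something beyond bookkeeping occurs --- is the derivation and sign analysis of the marginal system: one must verify that $\d_{y_j}V\equiv0$ genuinely removes the potential from the $\mathbf m$-equation, that the $x$-divergences integrate to zero, that defocusing ($\l>0$) is used exactly once, and, above all, that after integrating out $x$ the kinetic stress still dominates $\mathbf m\otimes\mathbf m/R$ --- this Cauchy--Schwarz-in-$x$ step is precisely where the partial-confinement structure is exploited. A secondary, purely technical point is to justify the differentiation of $M$ and all integrations by parts (that $M\in C^1$ with the stated $\dot M$), which is done by the usual approximation argument: the identities are rigorous for smooth, sufficiently decaying solutions, and the resulting inequality \eqref{eq:mor_mar} then persists for any $u$ satisfying the hypotheses.
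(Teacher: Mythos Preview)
Your approach is correct and reaches the same estimate, but it is organized differently from the paper's proof. The paper runs the interaction Morawetz directly on $\R^d\times\R^d$, choosing the weight $a(z-z')=|y-y'|$ to depend only on $y$; the potential term $\langle\rho,\nabla a\ast(\rho\nabla V)\rangle$ then vanishes automatically, and nonnegativity of the kinetic contribution comes from the pointwise complex identity that rewrites it as $\tfrac12\iint\nabla^2a$ contracted against $\bigl(\bar u(z')\nabla_y u(z)-\bar u(z)\nabla_y u(z')\bigr)$ times its conjugate. You instead integrate out $x$ \emph{first}, obtaining a closed hydrodynamic system for $(R,\mathbf m)$ on $\R^{d-n}$, and then run the standard interaction Morawetz in that reduced space; positivity of the kinetic term now rests on your Cauchy--Schwarz-in-$x$ inequality $T^{\rm kin}\succeq\mathbf m\otimes\mathbf m/R$ together with the convexity of $a$ (via the usual velocity decomposition $\mathbf m=R\mathbf v$, $T^{\rm kin}=R\,\mathbf v\otimes\mathbf v+S$ with $S\succeq0$). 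Your route makes transparent that the problem reduces to a defocusing, potential-free Euler--Schr\"odinger system in dimension $d-n$, and would apply to any microscopic model producing the same marginal balance laws and stress inequality; the paper's route bypasses the reduction by exploiting the complex structure of $u$ directly. One cosmetic slip: with your sign convention for $M$ the computation actually gives $\dot M\le -c_m\bigl\|\lvert\nabla_y\rvert^{(3-m)/2}R\bigr\|_{L^2_y}^2$ rather than $\ge$, but this is harmless since only $\sup_t|M(t)|$ enters the final bound.
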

Using these estimates, we can prove asymptotic completeness
results. Note however that since we consider energy-subcritical
problems on $\R^d$, we assume
\begin{equation*}
  \si<\frac{2}{(d-2)_+}.
\end{equation*}
On the other hand, proving asymptotic completeness like in $\R^{d-n}$
via Morawetz estimates 
($d-n$ is the ``scattering dimension'') is sensible, in view of the
third point of Theorem~\ref{theo:NLS}, provided that
\begin{equation*}
  \si>\frac{2}{d-n}.
\end{equation*}
The above two conditions are compatible if and only if $n=1$. This
restriction is expected to be only technical, since scattering should
occur under the weaker assumption
\begin{equation*}
  \si>\frac{1}{d-n},
\end{equation*}
but we lack of tools to decrease the value of $\si$ down to this
threshold. Note that in the small data case, asymptotic completeness
is established for $n=2$ provided that $d\ge 3$, for instance, from
Theorem~\ref{theo:wave}. 
On the other hand, it is our choice to state the result
only for $d\le 4$, since the case $d\ge 5$ would bring new
technicalities in the presentation, but no new real difficulty (see
also \cite{PTV-p}). 
\begin{theorem}[Asymptotic completeness]\label{theo:ac}
  Let $n=1$, $2\le d\le 4$,  and $\lambda >0$. Assume moreover:
  \begin{itemize}
  \item If $d=2$: $\si>2$.
  \item If $d=3$: $1<\si<2$.
  \item If $d=4$: $2/3<\si<1$. 
  \end{itemize}
For all $u_0\in \Sigma$, there exists a
  unique $u_+\in \Sigma$ such that the solution $u$ provided by
  Proposition~\ref{prop:Cauchy} satisfies
\begin{equation*}
  \left\|e^{it H} u(t) - u_+\right\|_{\Sigma}\Tend t {+\infty}0.
\end{equation*}
\end{theorem}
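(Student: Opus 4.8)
The plan is to run the standard scattering scheme in the ``scattering dimension'' $d-n = d-1$, but with all a priori decay replaced by the anisotropic Morawetz bound of Proposition~\ref{prop:morawetz}, and with the confinement controlled by the conservation laws of Proposition~\ref{prop:Cauchy}. First I would record the global a priori bounds: from the conservation of mass and of $\langle u,Hu\rangle + \frac{\lambda}{\sigma+1}\|u\|_{L^{2\sigma+2}}^{2\sigma+2}$ (here $\lambda>0$, so all terms are nonnegative), the solution satisfies $\sup_t\big(\|u(t)\|_{L^2}+\|xu(t)\|_{L^2}+\|\nabla_x u(t)\|_{L^2}+\|\nabla_y u(t)\|_{L^2}\big)<\infty$; in particular the hypothesis $\sup_t\|\nabla_y u(t)\|_{L^2}<\infty$ of Proposition~\ref{prop:morawetz} is met, so \eqref{eq:mor_mar} holds. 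With $n=1$, the exponent $\tfrac{3-(d-n)}{2}=\tfrac{4-d}{2}$ is $1,\tfrac12,0$ for $d=2,3,4$, so \eqref{eq:mor_mar} gives a genuine space-time bound on a fractional derivative of the marginal $R(t,y)=\int_{\R}|u(t,x,y)|^2\,dx$.

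Next I would convert this into decay of a nonlinear quantity. The key point is that $R$ is exactly the $x$-averaged mass density, and by Hölder in $x$ together with the uniform $H^1_x$ control of $u$ (hence a uniform $L^\infty_x$, or at least $L^q_x$, bound via one-dimensional Sobolev embedding since $n=1$), one can dominate $\|u(t)\|_{L^{2\sigma+2}_{x,y}}$ or the relevant Strichartz-type norm on $\R^{d-1}_y$ by norms of $R$ controlled through \eqref{eq:mor_mar}. Concretely: interpolate the Morawetz bound on $|\nabla_y|^{(4-d)/2}R$ in $L^2_{t,y}$ with the uniform-in-time bound $\sup_t\|R(t)\|_{L^1_y\cap L^\infty_y}$ (the $L^\infty_y$ coming again from $H^1_x$ and $H^1_y$ control, Gagliardo–Nirenberg in $y$), to obtain $R\in L^{q_0}_t L^{r_0}_y$ for a suitable admissible pair; then translate this into $\big\||u|^{2\sigma}u\big\|$ in a dual Strichartz space on a long time interval being small, exploiting that the prescribed ranges of $\sigma$ ($\sigma>2$ if $d=2$, $1<\sigma<2$ if $d=3$, $2/3<\sigma<1$ if $d=4$) are precisely those for which the exponents match (the lower bound $\sigma>2/(d-1)$ is the $L^2$-supercriticality threshold in dimension $d-1$, the upper bound $\sigma<2/(d-2)_+$ is energy-subcriticality in dimension $d$).

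Then I would feed this into a bootstrap on Strichartz norms. Using the global-in-time Strichartz estimates for $e^{-itH}$ developed earlier in the paper (Section~\ref{sec:strichartz}), Duhamel's formula, and the smallness of the dual-Strichartz norm of the nonlinearity on $[T,\infty)$ for $T$ large (which follows because the Morawetz-controlled quantity is integrable in time, so its tail is small), one upgrades to $u\in L^{q}_t W^{1,r}_x$-type bounds globally in $t$ on $[T,\infty)$ for all admissible $(q,r)$. From there, the existence of $u_+\in H^1$ with $\|e^{itH}u(t)-u_+\|_{H^1}\to 0$ is routine: $e^{itH}u(t)$ is Cauchy in $H^1$ as $t\to+\infty$ by the Duhamel integral plus Strichartz, since the nonlinear Duhamel tail tends to zero. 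Finally, to get convergence in $\Sigma$ rather than just $H^1$, I would commute the relevant vector fields through the equation (as in the construction of wave operators in Section~\ref{sec:wave}): the operators $x+$(something), $\nabla_x$ — constrained by the harmonic oscillator in the $x$-direction — and the Galilean-type operator $y+it\nabla_y$ in the free directions essentially commute with $i\partial_t - H$ up to lower-order terms, and acting with them on the Duhamel formula, the same Strichartz-plus-smallness argument controls $\|(y+it\nabla_y)(e^{itH}u(t)-u_+)\|_{L^2}$ and the $x$-weight, yielding convergence in $\Sigma$.

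The main obstacle is the second step: squeezing a scattering-grade decay statement out of \eqref{eq:mor_mar}, which only controls a fractional derivative of the \emph{marginal} $R$, not of $u$ itself. One must carefully choose the interpolation/Hölder chain so that the loss of $x$-information (absorbed by uniform $H^1_x$ bounds, which is affordable precisely because $n=1$ and one-dimensional Sobolev embedding is generous) still leaves enough $y$-integrability to close the argument in the stated $\sigma$-window; in particular at the endpoints $d=2$ and $d=4$ the exponent $(4-d)/2$ is $1$ or $0$, and one has to verify that the resulting admissible pair is non-endpoint (or handle the endpoint separately). The compatibility of the two constraints $\sigma>2/(d-1)$ and $\sigma<2/(d-2)_+$ only when $n=1$ is exactly what makes this step succeed, and that is where the restriction $2\le d\le 4$ comes from if one wants to avoid the extra technicalities of larger $d$.
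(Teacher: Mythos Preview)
Your outline has the right skeleton (Morawetz on the marginal $R$, conservation laws, Strichartz bootstrap, then the vector fields $A_j$ for $\Sigma$ convergence), but it skips precisely the step where the argument bites. The bootstrap does \emph{not} close in the isotropic global Strichartz norms $\ell^p_\gamma L^q(I_\gamma;L^r_{x,y})$ of Theorem~\ref{theo:strichartz}: if you H\"older $\||u|^{2\sigma}u\|_{\ell^{p_1'}L^{q_1'}L^{r_1'}}$ into a factor $\|u\|_{\ell^\theta L^\omega L^s}$ to be fed by Morawetz/energy and a factor in an admissible triplet, the constraint $d-n<d$ forces $\theta<\omega$ whenever $r>2$, and no combination of energy conservation and the Morawetz bound places $u$ in such a space. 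The paper explains this obstruction in Section~\ref{sec:outline} and runs the entire argument instead in the \emph{anisotropic} norms $L^p_tL^r_yL^2_x$ of Proposition~\ref{prop:generalization}. The Morawetz estimate then reads directly as $u\in L^4_tL^4_yL^2_x$ ($d=4$), $u\in L^4_tL^8_yL^2_x$ ($d=3$, via $\dot H^{1/2}(\R^2)\hookrightarrow L^4$), $u\in L^6_tL^\infty_yL^2_x$ ($d=2$), and the $x$-variable is absorbed through the one-dimensional bound $\|f\|_{L^\infty_x}\lesssim\|f\|_{L^2_x}^{1/2}\|\partial_x f\|_{L^2_x}^{1/2}$. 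Your phrase ``$u\in L^q_tW^{1,r}_x$-type bounds'' suggests you are heading toward the isotropic route, which is exactly what fails.

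Two further points. First, your interpolation input $\sup_t\|R(t)\|_{L^\infty_y}$ is not available for $d=3,4$: there $y\in\R^{d-1}$ with $d-1\ge 2$, and from the energy one only gets $R\in L^\infty_t W^{1,1}_y$, which does not embed in $L^\infty_y$. Second, the case $d=2$ requires a genuinely different idea that your sketch does not anticipate: the $L^p_tL^r_yL^2_x$ bootstrap covers only $2<\sigma<4$, so to reach all $\sigma>2$ the paper replaces $L^2_x$ by the Banach algebra $\Sigma_x^{1/2+\delta}$ (the domain of a fractional power of the harmonic oscillator), interpolates the energy bound to $u\in L^\infty_tH^{1/2-\alpha}_y\Sigma_x^{1/2+\alpha}$, and closes the nonlinear estimate using the algebra property. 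Finally, note that the $A_j$ commute \emph{exactly} with $i\partial_t-H$ (Lemma~\ref{lem:Z}), not merely up to lower order; this exact commutation is what reduces the $\Sigma$ step to a repetition of the $L^2_x$ argument via Lemma~\ref{lem:Gronwall}.
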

  
\subsection{Structure of the paper}
\label{sec:plan}

In Section~\ref{sec:product}, we recall some of the aspects in
\cite{HaPa13,TzVi12}, with emphasis on aspects related to Strichartz
estimates. In Section~\ref{sec:strichartz}, we propose a slight
generalization of a result from \cite{TzVi12}, and establish global in
time Strichartz estimates associated to $e^{-itH}$ which are isotropic
in space, in the same fashion as in \cite{HaPa13}, and as opposed to
\cite{PTV-p,TzVi12}. These estimates allow us to prove
Theorem~\ref{theo:wave} in Section~\ref{sec:wave}, thanks to suitable
vector-fields. Proposition~\ref{prop:morawetz} is established in
Section~\ref{sec:morawetz}, and Theorem~\ref{theo:ac}, in
Section~\ref{sec:ac}. Finally in an appendix, we present formal
arguments suggesting the existence of long range effects if $\si$ is
not sufficiently large.

\section{Nonlinear Schr\"odinger equation on product spaces}
\label{sec:product}
 
In this section, we recall some results which are highly related to
the framework of the present paper, at least qualitatively.
\smallbreak

A key step both in \cite{TzVi12} and in \cite{HaPa13} consists in
proving global in time Strichartz estimates, even though the free dynamics is
not fully dispersive due to the boundedness of the compact manifold
($\T^2$ in the case of \cite{HaPa13}). For $M^n$ an $n$-dimensional
compact manifold without boundary, consider
\begin{equation}
  \label{eq:LSM}
  i\d_t u+\frac{1}{2}\Delta_{M^n\times \R^{d-n}}u =F. 
\end{equation}
We introduce once and for all the notion of admissible pairs inspired
by Strichartz estimates on $\R^k$. The definition includes a notion of
``dispersive dimension'', and will be of constant use in this paper. 
\begin{definition}\label{def:adm}
  Let $k$ be a positive integer. A pair $(q,r)$ is $k$-admissible if $2\le r
  <\frac{2k}{k-2}$ ($2\le r\le\infty$ if $k=1$, $2\le r<
  \infty$ if $k=2$)  
  and 
$$\frac{2}{q}= k\left( \frac{1}{2}-\frac{1}{r}\right).$$
\end{definition}
\begin{proposition}[From \cite{TzVi12}]\label{prop:striTV}
  Let $d\ge 2$, $1\le n\le d-1$ and $M^n$ be an $n$-dimensional
  compact manifold without boundary. For $(x,y)\in M^n\times
  \R^{d-n}$, the following estimate holds: there exists $C_{r_1,r_2}$
  such that
  \begin{align*}
    \|e^{it\Delta_{x,y}}f\|_{L^{p_1}_t L^{r_1}_y L^2_x} +& \left\|\int_0^t
      e^{i(t-s)\Delta_{x,y}} F(s,x,y)ds\right\|_{L^{p_1}_t L^{r_1}_y
      L^2_x}\\
& \le C_{r_1,r_2}\(\|f\|_{L^2_{x,y}}+ \|F\|_{L^{p_2'}_t L^{r_2'}_y
      L^2_x}\),  
  \end{align*}
where the pairs $(p_1,r_1)$ and $(p_2,r_2)$ are $(d-n)$-admissible. 
\end{proposition}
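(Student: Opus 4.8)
The plan is to run the classical $TT^\ast$ / fractional-integration argument of Ginibre--Velo, but with the $x$-variable carried along as an auxiliary Hilbert-space index: the crucial point is that no dispersion is needed from $M^n$, only the fact that $e^{it\Delta_x}$ is unitary on $L^2(M^n)$, so that the genuine time decay comes entirely from the $\R^{d-n}$ factor. First I would record the factorization $e^{it\Delta_{x,y}}=e^{it\Delta_x}e^{it\Delta_y}$, valid since $\Delta_x$ and $\Delta_y$ act on independent variables, together with the two elementary facts that $e^{it\Delta_x}$ is unitary on $L^2_x=L^2(M^n)$ (Stone's theorem, $\Delta_{M^n}$ being self-adjoint) and that $e^{it\Delta_y}$ obeys the usual dispersive bound $\|e^{it\Delta_y}g\|_{L^\infty_y}\lesssim|t|^{-(d-n)/2}\|g\|_{L^1_y}$ on $\R^{d-n}$. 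Writing $e^{it\Delta_y}$ as convolution in $y$ against a kernel of size $\lesssim|t|^{-(d-n)/2}$, pulling the $L^2_x$ norm inside the $y$-integral by Minkowski's inequality, and then using $\|e^{it\Delta_x}f(\cdot,y)\|_{L^2_x}=\|f(\cdot,y)\|_{L^2_x}$, yields the partial dispersive estimate
\begin{equation*}
\|e^{it\Delta_{x,y}}f\|_{L^\infty_y L^2_x}\lesssim|t|^{-(d-n)/2}\|f\|_{L^1_y L^2_x},\qquad t\neq 0,
\end{equation*}
while conservation of mass gives the trivial bound $\|e^{it\Delta_{x,y}}f\|_{L^2_y L^2_x}=\|f\|_{L^2_y L^2_x}$.

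Interpolating these two bounds in the $y$-Lebesgue exponent — complex (Riesz--Thorin) interpolation of $L^p_y$-valued spaces with the fixed Hilbert fibre $L^2_x$ — produces, for every $2\le r\le\infty$,
\begin{equation*}
\|e^{it\Delta_{x,y}}f\|_{L^r_y L^2_x}\lesssim|t|^{-(d-n)(\frac12-\frac1r)}\|f\|_{L^{r'}_y L^2_x}.
\end{equation*}
Next I would feed this into the standard abstract Strichartz machinery with Hilbert space $\mathcal H=L^2_{x,y}$ and decay exponent $\sigma=(d-n)/2$. For a non-endpoint $(d-n)$-admissible pair $(p_1,r_1)$ — recall Definition~\ref{def:adm} excludes the endpoint — the homogeneous estimate $\|e^{it\Delta_{x,y}}f\|_{L^{p_1}_t L^{r_1}_y L^2_x}\lesssim\|f\|_{\mathcal H}$ follows by the usual route: dualize in $(t,y)$, using $(L^{p_1'}_tL^{r_1'}_yL^2_x)^\ast=L^{p_1}_tL^{r_1}_yL^2_x$ (legitimate since $L^2_x$ is a Hilbert space, hence reflexive and with the Radon--Nikodym property); reduce to bounding the bilinear form $\iint\langle e^{i(t-s)\Delta_{x,y}}G(s),G(t)\rangle\,ds\,dt$; estimate the inner product via the interpolated dispersive bound by $|t-s|^{-(d-n)(\frac12-\frac1{r_1})}\|G(s)\|_{L^{r_1'}_yL^2_x}\|G(t)\|_{L^{r_1'}_yL^2_x}$; and close with the Hardy--Littlewood--Sobolev inequality in $t$, whose homogeneity $(d-n)(\frac12-\frac1{r_1})=2/p_1\in(0,1)$ is precisely the admissibility relation away from the endpoint.

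The retarded estimate with two possibly distinct admissible pairs then comes by composing the forward homogeneous estimate with the dual of the homogeneous estimate to control $\int_{\R}e^{i(t-s)\Delta_{x,y}}F(s)\,ds$, followed by the Christ--Kiselev lemma to restrict the integral to $\{s<t\}$; this is legitimate because every admissible exponent here satisfies $p>2$, so $p_1>p_2'$. Since Definition~\ref{def:adm} only allows $r$ strictly below the Sobolev exponent, the endpoint Strichartz estimate is never invoked, and the plain $TT^\ast$ argument above suffices (Keel--Tao would also apply but is not needed).

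I expect the only delicate point — more bookkeeping than genuine obstacle — to be the consistent handling of the vector-valued structure: one must treat everything as $L^2_x$-valued throughout, verify the duality $(L^{r'}_yL^2_x)^\ast=L^r_yL^2_x$ and the vector-valued interpolation used above, and observe that the $M^n$-propagator enters only through its unitarity on $L^2_x$. This last observation is exactly what makes the estimate insensitive to the geometry of $M^n$, and is also what underlies the slight generalization carried out in Section~\ref{sec:strichartz}, where $\Delta_{M^n}$ is replaced by an arbitrary self-adjoint operator (in particular the harmonic oscillator appearing in $H$): the proof is verbatim the same, since the only structural input is that the associated unitary group preserves the $L^2_x$ norm.
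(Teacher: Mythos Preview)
Your proof is correct, but it takes a different route from the one the paper attributes to \cite{TzVi12}. That argument exploits the spectral decomposition of $\Delta_{M^n}$: expand $f$ and $F$ in the $L^2(M^n)$ eigenbasis, observe that each coefficient then solves a free Schr\"odinger equation on $\R^{d-n}$ (the eigenvalue contributing only a harmless phase $e^{-it\lambda_k}$), apply the standard \emph{scalar} Strichartz estimates on $\R^{d-n}$ to each coefficient, and reassemble in $\ell^2$ via Minkowski's inequality (the order of norms being the right one since $r\ge 2$). Your approach instead establishes an $L^1_yL^2_x\to L^\infty_yL^2_x$ dispersive bound directly from the unitarity of $e^{it\Delta_x}$ and runs the abstract $TT^\ast$/Keel--Tao machinery with $B_0=L^2_{x,y}$, $B_1=L^1_yL^2_x$. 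This is precisely the argument the paper gives for Proposition~\ref{prop:generalization}, which is presented there as a \emph{generalization} of Proposition~\ref{prop:striTV} to operators lacking an eigenbasis --- so you have effectively proved the stronger statement straight away. The eigenbasis route buys a more elementary proof (no vector-valued interpolation or duality, just scalar Strichartz plus Minkowski) at the price of requiring discrete spectrum; your route buys generality at essentially no extra cost, which is exactly the point of Section~\ref{sec:generalization}.
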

The proof in \cite{TzVi12} relies on the fact that $\Delta_{M^n}$
possesses an eigenbasis in $L^2(M^n)$: by decomposing any solution to
\eqref{eq:LSM} on this eigenbasis, each coefficient solves a
Schr\"odinger equation on $\R^{d-n}$, and satisfies global Strichartz
estimates. Proposition~\ref{prop:striTV} follows by summing these
inequalities in $L^2(M^n)$ and invoking Minkowski inequality. In
Section~\ref{sec:generalization}, we present a generalization of this
result, which does not require any spectral analysis.

In the case where $d=3$, $n=2$ and $M^2=\T^2$, another family of estimates
has been established in \cite{HaPa13}. For $\gamma \in \Z$, set
$I_\gamma =2\pi[\gamma,\gamma+1)$.  
\begin{proposition}[From \cite{HaPa13}]\label{prop:striHP}
  Let $N\ge 1$ be dyadic, then 
  \begin{equation*}
    \left\|e^{it\Delta_{\R\times\T^2}}P_{\le
        N}u_0\right\|_{\ell^q_\gamma
      L^p_{t,x,y}(I_\gamma\times\R\times\T^2)}\lesssim
    N^{\(\frac{3}{2}-\frac{5}{p}\)} \|u_0\|_{L^2(\R\times\T^2)},
  \end{equation*}
whenever 
\begin{equation*}
  p>4\quad \text{and}\quad \frac{2}{q}+\frac{1}{p}=\frac{1}{2},
\end{equation*}
and where $P_{\le N}$ stands for a frequency cut-off. 
\end{proposition}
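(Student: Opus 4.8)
The plan is to use the product structure $e^{it\Delta_{\R\times\T^2}}=e^{it\partial_x^2}\otimes e^{it\Delta_{\T^2}}$ together with the observation that, in the notation of Definition~\ref{def:adm}, the relation $\tfrac2q+\tfrac1p=\tfrac12$ says exactly that $(q,p)$ is a $1$-admissible pair for the $x$-direction: the genuine, global-in-time dispersion lives only in $x$ and is what makes the $\ell^q_\gamma$-summation finite, while on each unit window $I_\gamma$ the two periodic directions contribute the derivative loss through Bourgain's local-in-time $L^p$ Strichartz inequality on $\T^2$, which for frequencies $\le N$ reads $\|e^{it\Delta_{\T^2}}P_{\le N}\phi\|_{L^p_tL^p_y(I\times\T^2)}\lesssim N^{1-4/p+}\|\phi\|_{L^2(\T^2)}$ for $p>4$ and $|I|\lesssim 1$. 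One then notes the bookkeeping identity $N^{3/2-5/p}=N^{1/2-1/p}\cdot N^{1-4/p}$, the first factor being exactly the one-dimensional Bernstein cost of passing from $L^2_x$ to $L^p_x$ on functions of $x$-frequency $\lesssim N$.

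Concretely, I would first establish the per-window bound. Expanding $P_{\le N}u_0=\sum_{|k|\le N}c_k(x)e^{ik\cdot y}$ (each $c_k$ of $x$-frequency $\lesssim N$), applying one-dimensional Bernstein in $x$ to trade $L^p_x$ for $N^{1/2-1/p}L^2_x$, using Plancherel in $x$ to reorganize the $x$-frequencies into an $L^2_\xi$-bundle, and then applying the $\T^2$ Strichartz estimate fibre-wise in $\xi$ — all the interchanges being legitimate by Minkowski's inequality, since the exponents involved are $\ge 2$ — yields
\begin{equation*}
\big\|e^{it\Delta_{\R\times\T^2}}P_{\le N}u_0\big\|_{L^p_{t\in I_\gamma}L^p_{x,y}}\lesssim N^{3/2-5/p+}\|u_0\|_{L^2(\R\times\T^2)},
\end{equation*}
uniformly in $\gamma$. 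This is, however, not enough by itself: summing a $\gamma$-uniform bound over $\gamma\in\Z$ diverges, and this uniform estimate is only one of the two inputs of the argument.

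The remaining, and main, difficulty is precisely to upgrade this uniform-in-$\gamma$ bound to the $\ell^q_\gamma$ statement — that is, to recover $\sum_\gamma\|\cdot\|_{L^p_{t\in I_\gamma}L^p_{x,y}}^q<\infty$ — which genuinely requires the global dispersion of the one-dimensional $x$-Schrödinger group and is invisible to any estimate comparing norms on a single window. I would resolve it without decoupling the two factors: keeping the $x$-variable throughout as an $L^2_\xi$-bundle, one couples Bourgain's local $\T^2$ analysis with the global-in-time Strichartz/decay estimate for $e^{it\partial_x^2}$, whose $L^q_t(\R)$ weight is integrable precisely because $2/q=1/2-1/p$ (equivalently, because the dispersive rate $|t|^{-(1/2-1/p)}$ of the $x$-group is raised to a summable power). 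In practice this is carried out in the Bourgain $X^{s,b}$ (or $U^p$–$V^p$) framework adapted to $\R\times\R\times\T^2$: one reduces the linear estimate to a bilinear/orthogonality estimate for products of free solutions and runs the circle method on the $\T^2$-frequencies with the extra continuous $\xi$-integral carried along, the latter only improving the count via one-dimensional stationary phase. Finally, the remaining cases $4<p<6$ and $p>6$ follow from the same scheme together with complex interpolation along the line $\tfrac2q+\tfrac1p=\tfrac12$, and one checks that the $\varepsilon$-loss in Bourgain's $L^4$ estimate on $\T^2$ is harmless given the strict inequality $p>4$.
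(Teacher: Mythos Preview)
The paper does not prove this proposition: it is quoted from \cite{HaPa13}, with the sole comment that the argument there makes ``extensive use of Strichartz estimates on $\T^2$'' and is ``rather involved technically.'' There is thus no proof in the paper to compare yours against; I can only evaluate your sketch on its own terms and against those remarks.

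Your identification of the ingredients matches the paper's description. The decomposition $N^{3/2-5/p}=N^{1/2-1/p}\cdot N^{1-4/p}$ correctly separates the one-dimensional Bernstein cost from the Bourgain $\T^2$-Strichartz loss; the per-window estimate you outline (Bernstein in $x$, Plancherel to an $L^2_\xi$-bundle, Bourgain fibrewise in $\xi$) is valid and gives the uniform-in-$\gamma$ bound with the correct power of $N$; and you have correctly located the real content in the $\ell^q_\gamma$ summation, tying it to the $1$-admissibility relation $2/q+1/p=1/2$.

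The gap is that your resolution of that step is not an argument. The fibrewise route you describe necessarily discards the phase $e^{-it\xi^2}$ when it passes through $L^2_\xi$ (you take absolute values), so no amount of post-processing within that scheme can recover the $x$-dispersion needed for summability in $\gamma$; one has to keep the $x$-oscillation coupled to the $\T^2$-analysis from the outset. Your single sentence invoking $X^{s,b}$/$U^p$--$V^p$ spaces, a bilinear reduction, the circle method, and one-dimensional stationary phase names several possible frameworks without committing to any of them or indicating how the coupling is actually carried out --- nor does it say at which exponents you would prove the estimate before interpolating. This is precisely the ``rather involved'' part the paper alludes to, and it is where the substantive work in \cite{HaPa13} lies. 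What you have written is a correct inventory of inputs and a correct diagnosis of the difficulty, but the crucial step remains a placeholder rather than a proof.
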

An important difference with the approach in \cite{TzVi12} is that in
the case $M^n=\T^n$, the eigenbasis of the Laplacian possesses a group
structure, which makes it possible to proceed with more explicit
computations. Extensive use of Strichartz estimates on $\T^2$
\cite{Bo93} (see also \cite{GiBBK})
is also made to prove Proposition~\ref{prop:striHP}. We emphasize
however that the proof of Proposition~\ref{prop:striHP} is rather
involved technically. We prove an analogous result in the framework of
\eqref{eq:NLSP} in Section~\ref{sec:harmo-global}. Even though the group
structure of the eigenfunctions of $H$ is lost (the eigenfunctions are
given by Hermite functions), computations
are less involved, and one does not have to face a loss of regularity
for local Strichartz estimates associated to the harmonic
oscillator.

\section{Global in time Strichartz estimates} 
\label{sec:strichartz}

\subsection{A slight generalization}\label{sec:generalization}

In this subsection, we generalize Proposition~\ref{prop:striTV} in a
case where the existence of an eigenbasis for the operator in the $x$
variable is not assumed. 
\begin{proposition}\label{prop:generalization}
  Let $d\ge 2$, $1\le n\le d-1$, $X^n$ be
an $n$-dimensional space, and $y\in \R^{d-n}$. Let $P$ be an operator
acting on the $x$ variable, but not on the $y$ variable. In particular, 
\begin{equation}\label{eq:commutes}
  [P,\Delta_y]=0. 
\end{equation}
Assume that the flow generated by $P$ is uniformly bounded on
$L^2(X^n)$, at least in the future,
\begin{equation*}
  \exists C>0,\quad \|e^{itP}v_0\|_{L^2(X^n)}\le C
  \|v_0\|_{L^2(X^n)},\quad \forall t\ge 0.
\end{equation*}
Then for all $u_0\in L^2(\R^{d})$, all $(d-n)$-admissible pairs
$(p_1,r_1)$ and $(p_2,r_2)$, there exists $C_{r_1,r_2}$ such that the
solution to 
\begin{equation*}
   i\d_t u + Pu+\Delta_y u  =F, \quad u_{\mid t=0}=u_0,
\end{equation*}
satisfies:
  \begin{align*}
    \|u\|_{L^{p_1}_t(\R_+; L^{r_1}_y L^2_x)}  \le
    C_{r_1,r_2}\(\|u_0\|_{L^2_{x,y}}+ \|F\|_{L^{p_2'}_t (\R_+;L^{r_2'}_y 
      L^2_x)}\). 
  \end{align*}
\end{proposition}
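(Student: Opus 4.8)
\textbf{Proof plan for Proposition~\ref{prop:generalization}.}

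The plan is to reduce the estimate to the classical Strichartz estimates on $\R^{d-n}$ by diagonalizing, or rather ``averaging away'', the $x$-dynamics --- not through a spectral decomposition of $P$, but by exploiting directly the uniform boundedness of $e^{itP}$ together with the commutation \eqref{eq:commutes}. First I would observe that, since $[P,\Delta_y]=0$, the propagator $e^{it(P+\Delta_y)}$ factorizes as $e^{itP}e^{it\Delta_y}$; here $e^{it\Delta_y}$ acts as the free Schr\"odinger group on $\R^{d-n}$ (tensored with the identity on $L^2(X^n)$), and $e^{itP}$ acts only on the $x$ variable. The key point is that $e^{itP}$, being uniformly bounded on $L^2(X^n)$ and acting trivially on $y$, is uniformly bounded on the mixed-norm space $L^{r}_y L^2_x$ for any $r$: indeed one applies the $L^2_x$-bound pointwise in $y$ and then takes the $L^r_y$ norm, using that the $x$-action does not see $y$. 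The same is true, by Minkowski's inequality in the time variable, for the norms $L^{p}_t L^{r}_y L^2_x$.

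Next, for the homogeneous term, I would write $u(t)=e^{itP}\big(e^{it\Delta_y}u_0\big)$ and bound
\begin{equation*}
  \|u\|_{L^{p_1}_t L^{r_1}_y L^2_x}
  = \big\|e^{itP}\big(e^{it\Delta_y}u_0\big)\big\|_{L^{p_1}_t L^{r_1}_y L^2_x}
  \le C\,\big\|e^{it\Delta_y}u_0\big\|_{L^{p_1}_t L^{r_1}_y L^2_x}.
\end{equation*}
Now for fixed $x$, the function $y\mapsto (e^{it\Delta_y}u_0)(x,y)$ is the free Schr\"odinger evolution in dimension $d-n$ of $u_0(x,\cdot)$, so the standard $\R^{d-n}$ Strichartz estimate (for the $(d-n)$-admissible pair $(p_1,r_1)$) gives a bound by $\|u_0(x,\cdot)\|_{L^2_y}$ for each $x$; taking $L^2_x$ and using Minkowski's inequality to exchange $L^2_x$ with the space-time norm on the left yields $\|u\|_{L^{p_1}_t L^{r_1}_y L^2_x}\lesssim \|u_0\|_{L^2_{x,y}}$. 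For the inhomogeneous term I would use Duhamel's formula $u(t)=-i\int_0^t e^{i(t-s)(P+\Delta_y)}F(s)\,ds$, commute $e^{itP}$ outside as before (it is bounded on every $L^{r}_yL^2_x$), and reduce to the inhomogeneous Strichartz estimate on $\R^{d-n}$ applied slicewise in $x$; here one needs the Christ--Kiselev lemma (or the standard $TT^*$ argument, noting $p_1\ge 2 > p_2'$ unless both pairs are the endpoint, in which case the dual estimate handles it) to pass from the ``global in time'' operator to the retarded one, exactly as in the classical proof, and then Minkowski's inequality again to pull $L^2_x$ through.

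The main obstacle, and the only genuinely delicate point, is the systematic use of Minkowski's inequality to interchange the $L^2_x$ norm with the space-time norms $L^{p}_tL^{r}_y$: this requires $p,r\ge 2$, which is guaranteed by the definition of $(d-n)$-admissibility (and at the $(d-n)=2$ or $1$ endpoints one must be slightly careful about which exponents are allowed, matching Definition~\ref{def:adm}). Everything else is a transcription of the $\R^{d-n}$ theory, with the operator $P$ contributing nothing but a harmless uniformly bounded factor --- which is precisely the improvement over Proposition~\ref{prop:striTV}, since no eigenbasis of $P$ is needed, only the uniform bound on $e^{itP}$. I would also remark that the argument only uses $e^{itP}$ for $t\ge 0$, consistently with the one-sided hypothesis and the restriction to $\R_+$ in the conclusion.
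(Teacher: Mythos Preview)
Your proof is correct and takes a somewhat different route from the paper's. The paper works directly with the full propagator: from the factorization $e^{it(P+\Delta_y)}=e^{itP}e^{it\Delta_y}$ and the $L^2_x$-bound on $e^{itP}$, it reads off the vector-valued dispersive estimate
\[
\|e^{it(P+\Delta_y)}\|_{L^1_yL^2_x\to L^\infty_yL^2_x}\lesssim t^{-(d-n)/2},\qquad t>0,
\]
together with the $L^2_{x,y}$ bound, and then invokes the abstract Keel--Tao theorem with $B_0=L^2_{x,y}$ and $B_1=L^1_yL^2_x$; the interpolation spaces $(B_0,B_1)_{\theta,2}$ produce the $L^{r}_yL^2_x$ scale. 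Your argument instead peels off the bounded factor $e^{itP}$ first (using that it acts trivially in $y$ and hence is bounded on every $L^r_yL^2_x$), reduces to the classical scalar Strichartz estimate on $\R^{d-n}$ applied slicewise in $x$, and uses Minkowski to reorder the norms. This is closer in spirit to the Tzvetkov--Visciglia proof of Proposition~\ref{prop:striTV}, with the eigenbasis decomposition replaced by the direct use of the $L^2_x$-boundedness of $e^{itP}$ --- precisely the point of the generalization. The paper's route is more compact (one dispersive bound, one black-box theorem); yours makes the reduction to the known $\R^{d-n}$ theory completely explicit and avoids interpolation.

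One small caveat on the inhomogeneous step: ``commute $e^{itP}$ outside'' amounts to writing $e^{i(t-s)P}=e^{itP}e^{-isP}$ and then running the slicewise-in-$x$ argument on $G(s)=e^{-isP}F(s)$. This uses the backward flow $e^{-isP}$, which is not literally granted by the one-sided hypothesis ``at least in the future''. In all the paper's examples $P$ is self-adjoint and the issue is moot; under the bare hypothesis the clean fix is exactly the $TT^*$ + Christ--Kiselev alternative you mention (the adjoint $(e^{isP})^*$ is automatically bounded by duality), which is in effect how the Keel--Tao invocation handles it for the paper as well.
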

For instance, $X^n$ may be any manifold without boundary, and
$P$ a Schr\"odinger operator, including  a real-valued
external potential or magnetic field, with decay or sign assumptions
to ensure that the flow is well-defined on $L^2(X^n)$. In this cases,
it may happen that no spectral theory  like in \cite{TzVi12} is
available. In general, the $X^n$ part can be viewed as a black box, and
Proposition~\ref{prop:generalization} is a way to take advantage of
the Schr\"odinger dispersion on $\R^{d-n}$. 
\begin{example}
  In
the case of 
Schr\"odinger operators on $\R^n$, $P=-\Delta_x+V(x)$, it suffices to
consider  $V= V_1+V_2$, with (see \cite[p.~199]{ReedSimon2}) $V_1$,
$V_2$ real-valued and measurable, and
\begin{itemize}
\item $V_1(x)\ge -a|x|^2-b$ for some constants $a$ and $b$.
\item $V_2\in L^p(\R^n)$ with $p\ge 2$ if $n\le 3$, $p>2$ if $n=4$,
  and $p\ge n/2$ if $n\ge 5$.
\end{itemize}
This includes in particular the case studied in this paper, $V(x)=|x|^2/2$.
\end{example}
\begin{example}
  Still on $\R^n$, $P$ may be any polynomial in $D_x=-i\d_x$, not
  necessarily elliptic, or more generally a real-valued Fourier
  multiplier. 
\end{example}

\begin{proof}
The commutation assumption
  \eqref{eq:commutes} implies
  \begin{equation*}
    e^{it(P+\Delta_y)} = e^{itP}e^{it\Delta_y} =
    e^{it\Delta_y}e^{itP}. 
  \end{equation*}
Using this remark, the assumption that $e^{itP}$ is bounded on
$L^2(X^n)$ for positive time, and the standard properties of the
Schr\"odinger group on $\R^{d-n}$, we have:
\begin{equation*}
  \|e^{it(P+\Delta_y)}\|_{L^2_{x,y}\to L^2_{x,y}}\lesssim 1,\quad \|
  e^{it(P+\Delta_y)} \|_{ L^1_yL_x^2 \to  L^\infty_y L^2_x}\lesssim
  \frac{1}{t^{(d-n)/2}},\quad t>0. 
\end{equation*}
Invoking \cite[Theorem~10.1]{KeTa98}, with 
\begin{equation*}
  B_0=H = L^2_{x,y},\quad B_1 = L^1_y L_x^2,
\end{equation*}
we have
\begin{align*}
  &\|e^{it(P+\Delta_y)}u_0\|_{L^p_t(\R_+; B_\theta^*)}\lesssim
  \|u_0\|_{H},\\
& \left\|\int_0^te^{i(t-s)(P+\Delta_y)}
  F(s)ds\right\|_{L^{p_1}_t(\R_+;B_{\theta_1}^*)}\lesssim
\|F\|_{L^{p_2'}_t(\R_+;B_{\theta_2})}, 
\end{align*}
where for $0\le \theta\le 1$, $B_\theta$ denotes the real interpolation space
$(B_0,B_1)_{\theta,2}$, and $2/p=(d-n)\theta/2$,
$p\ge 2$, $(p,\theta,d-n)\not = (2,1,1)$, 
hence the proposition.  
\end{proof}

\subsection{The case of partial harmonic
  confinement}\label{sec:harmo-global}

We split the Hamiltonian $H$ into two parts, the confining one and the
fully dispersive one, by denoting
\begin{equation*}
  H_1 = -\frac{1}{2}\Delta_x
  +\frac{|x|^2}{2};\quad H_2=-\frac{1}{2}\Delta_y.
\end{equation*}
As in the previous subsection, we note that both operators commute,
$ [H_1,H_2]=0,$
therefore their corresponding propagators also commute
\begin{equation*}
  e^{-itH}=e^{-it(H_1+H_2)}=e^{-itH_1}e^{-itH_2}= e^{-itH_2}e^{-itH_1}.
\end{equation*}
The classical formula 
\begin{equation*}
  e^{-itH_2}g(y) = \frac{1}{(2i\pi
    t)^{(d-n)/2}}\int_{\R^{d-n}}e^{i\frac{|y-y'|^2}{2t}}f(y')dy'
\end{equation*}
yields the standard global dispersive estimate
\begin{equation}\label{eq:dispfree}
  \|e^{-itH_2}\|_{L^1(\R^{d-n})\to L^\infty(\R^{d-n})}\le \frac{1}{\(2\pi
    |t|\)^{(d-n)/2}},\quad \forall t\not= 0. 
\end{equation}
Similarly, the fundamental solution associated to $H_1$ is given by
Mehler's formula \cite{Me66}: 
\begin{equation*}
  e^{-itH_1}f(x) = \frac{1}{(2i\pi \sin t)^{n/2}}\int_{\R^n}
  e^{\frac{i}{2\sin t}\((|x|^2+|x'|^2)\cos t-2x\cdot x' \)}f(x')dx',
\end{equation*}
where the square root of $\sin t$ means implicitly that the
singularities of the fundamental solution are taken into account:
every time $\sin t=0$, a phase shift appears (Maslov index; see
e.g. \cite{KRY,Zelditch83}). The only 
information that we shall actually use is that local in time
dispersive estimates are available, periodically in time:
\begin{equation}\label{eq:dispharmo}
  \|e^{-itH_1}\|_{L^1(\R^n)\to L^\infty(\R^n)}\le \frac{1}{\(2\pi
    |\sin t|\)^{n/2}},\quad \forall t\not\in \pi\Z. 
\end{equation}
A property that we will use crucially to establish global Strichartz
estimates for $e^{-itH}$ is the fact that the above right hand side is
periodic (see Remark~\ref{rem:irrat} below).  Accordingly, for
$\gamma\in \Z$, we set $I_\gamma = \pi[\gamma-1,\gamma+1)$. 
\begin{theorem}[Global Strichartz estimates]\label{theo:strichartz}
Let $d\ge 2$ and $1\le n\le d-1$. If $2\le r < \frac{2d}{d-2}$ and the
pairs $(q,r)$ and $(p,r)$ are, respectively, 
$d$-admissible and $(d-n)$-admissible, then the
following two inequalities hold, 
\begin{equation}\label{eq:strichartzhom}
    \left\|e^{-itH}u_0\right\|_{\ell^p_\gamma
      L^q(I_\gamma;L^r(\R^d))}\lesssim
     \|u_0\|_{L^2(\R^d)},
\end{equation}
and
\begin{equation}\label{eq:strichartzhomdual}
     \left\|\int_{t\in\R} e^{i t H}F(t)dt\right\|_{L^{2}(\R^d)}\lesssim \|F\|_{\ell^{p'}_\gamma
      L^{q'}(I_\gamma;L^{r'}(\R^d))}.
\end{equation}
Also, if $(p_1,q_1,r_1)$ and $(p_2,q_2,r_2)$ are two such triplets, then
\begin{equation}\label{eq:strichartzinhom}
\left\|\int_0^t e^{-i(t-s)H}F(s)ds\right\|_{\ell^{p_1}_\gamma
      L^{q_1}(I_\gamma;L^{r_1}(\R^d))}\lesssim \|F\|_{\ell^{p_2'}_\gamma
      L^{q_2'}(I_\gamma;L^{r_2'}(\R^d))}.
\end{equation}
\end{theorem}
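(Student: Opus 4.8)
The plan is to combine the periodic-in-time local dispersive estimate \eqref{eq:dispharmo} for $e^{-itH_1}$ with the global dispersive estimate \eqref{eq:dispfree} for $e^{-itH_2}$, exploiting the fact that the two propagators commute, so that $e^{-itH}=e^{-itH_1}e^{-itH_2}$. First I would record the two "local" dispersive estimates on a single interval $I_\gamma$: since the right-hand side of \eqref{eq:dispharmo} is $\pi$-periodic and integrable near $\pi\Z$, the operator $e^{-itH}$ satisfies on $I_\gamma$ the mixed estimate $\|e^{-itH}\|_{L^1(\R^d)\to L^\infty(\R^d)}\lesssim |t-\pi\gamma|^{-n/2}\,|t'|^{-(d-n)/2}$ in the appropriate split of the time variable — but the cleaner route is to apply the abstract machinery twice. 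Concretely, on each $I_\gamma$ one has, by the usual $TT^*$ / Keel–Tao argument applied to $e^{-itH}$ with the $L^2\to L^2$ bound and the dispersive bound $\|e^{-itH}\|_{L^1_{x,y}\to L^\infty_{x,y}}\lesssim |\sin t|^{-n/2}|t|^{-(d-n)/2}$, a local-in-time Strichartz estimate. The key point, already used in \cite{HaPa13} and isolated in Remark~\ref{rem:irrat}, is that the constant in this local estimate is \emph{uniform in $\gamma$}, because the bound on $I_\gamma$ depends only on $\gamma$ through a translate of a fixed integrable weight.

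Next I would upgrade from a uniform family of local-in-time estimates to the global $\ell^p_\gamma L^q_t$ estimate \eqref{eq:strichartzhom}. This is where the two different admissibility conditions come in: the $d$-admissible pair $(q,r)$ governs the behaviour \emph{inside} each interval $I_\gamma$ (length $2\pi$, so the harmonic part contributes its full $n$ dimensions of local dispersion, making the total local dispersive dimension $d$), while the $(d-n)$-admissible pair $(p,r)$ governs the \emph{summation over $\gamma$} (globally in time only the $\R^{d-n}$ directions disperse). The mechanism is: apply Proposition~\ref{prop:generalization} — or rather its underlying Keel–Tao input with $B_0=L^2_{x,y}$, $B_1=L^1_yL^2_x$ — to get the global-in-time estimate $\|e^{-itH}u_0\|_{L^p_t L^r_y L^2_x}\lesssim\|u_0\|_{L^2}$ with $(p,r)$ being $(d-n)$-admissible; then interpolate/combine this with the uniform local estimate $\|e^{-itH}u_0\|_{L^q(I_\gamma;L^r(\R^d))}\lesssim\|u_0\|_{L^2}$ by a Minkowski-inequality argument in $\ell^p_\gamma$, splitting the $L^r(\R^d)=L^r_yL^r_x$ norm and using $r\ge 2$ so that $L^r_yL^r_x\hookrightarrow$ behaves well against $L^r_yL^2_x$ after Hölder on the $x$-variable is traded for the local time integration. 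I expect this interpolation/bootstrapping between "local in each $I_\gamma$ with $d$-dispersion" and "global in $\ell^p_\gamma$ with $(d-n)$-dispersion" to be the main technical obstacle, since one must be careful that the $L^r_x$ norm at dispersive dimension $d$ and the $L^2_x$ norm at dispersive dimension $d-n$ are reconciled — the correct statement keeps a genuine $L^r(\R^d)$ (isotropic) norm, which is the improvement over \cite{TzVi12,PTV-p}, and getting that requires using the \emph{isotropic} dispersive estimate on $I_\gamma$ rather than the fibered one.

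Finally, \eqref{eq:strichartzhomdual} is just the dual statement of \eqref{eq:strichartzhom} (the operator $F\mapsto\int e^{itH}F\,dt$ is the adjoint of $u_0\mapsto e^{-itH}u_0$ between the relevant spaces, and $(\ell^p_\gamma L^q_t L^r)^*=\ell^{p'}_\gamma L^{q'}_t L^{r'}$), and the inhomogeneous estimate \eqref{eq:strichartzinhom} follows from \eqref{eq:strichartzhom} and \eqref{eq:strichartzhomdual} by the Christ–Kiselev lemma to handle the retarded truncation $\int_0^t$, together with the non-retarded bound $\big\|\int_\R e^{-i(t-s)H}F(s)ds\big\|_{\ell^{p_1}_\gamma L^{q_1}L^{r_1}}\lesssim\|F\|_{\ell^{p_2'}_\gamma L^{q_2'}L^{r_2'}}$ obtained by composing \eqref{eq:strichartzhom} with \eqref{eq:strichartzhomdual}; one checks the Christ–Kiselev exponent condition $p_2'<p_1$ holds (or handles the diagonal case $p_1=p_2$ separately, e.g. via the $TT^*$ argument directly at the level of $\ell^p_\gamma$-valued spaces). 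I would also remark, following Remark~\ref{rem:irrat}, that periodicity (equivalently, rationality of the confinement frequencies) is used here only to make the weight on $I_\gamma$ a \emph{fixed} translate; the whole scheme is otherwise insensitive to the precise structure of Mehler's formula beyond \eqref{eq:dispharmo}.
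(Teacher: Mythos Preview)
Your identification of the ingredients is correct (commutation of $e^{-itH_1}$ and $e^{-itH_2}$, periodicity of the $H_1$-dispersive weight, duality for \eqref{eq:strichartzhomdual}, Christ--Kiselev for \eqref{eq:strichartzinhom}), but the central step of your plan contains a genuine gap. You propose to obtain the isotropic $\ell^p_\gamma L^q(I_\gamma;L^r(\R^d))$ bound by combining the anisotropic global estimate $\|e^{-itH}u_0\|_{L^p_t L^r_y L^2_x}\lesssim\|u_0\|_{L^2}$ from Proposition~\ref{prop:generalization} with the uniform local isotropic estimate on $I_\gamma$, via some ``interpolation/bootstrapping'' or ``Minkowski-inequality argument''. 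You yourself flag this as the main obstacle, and indeed no such interpolation is available: the spaces $L^r_y L^2_x$ and $L^r_{x,y}$ do not interpolate to produce $L^r_{x,y}$ again, and there is no H\"older-in-$x$ trick that upgrades $L^2_x$ to $L^r_x$ using only local time integration without losing the global $\ell^p_\gamma$ summability. The anisotropic estimate is simply not an input to the isotropic one.

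The paper's argument avoids this entirely by running a single $TT^*$ argument directly at the bilinear level, with a diagonal/off-diagonal decomposition in the $\gamma$-index. After a partition of unity $\sum_\gamma\chi(t-\pi\gamma)=1$, the bilinear form splits into pairs $(\alpha,\beta)$. For $|\alpha-\beta|\le 1$ one uses the local $d$-admissible Strichartz estimate (your uniform local estimate), giving control in $\ell^2_\gamma L^{q'}(I_\gamma;L^{r'})$. For $|\alpha-\beta|>1$ the key is the \emph{decoupled} dispersive bound
\[
\|e^{-i(t-s+\pi\gamma)H}\|_{L^{r'}\to L^r}\lesssim |t-s|^{-n(1/2-1/r)}\,|\gamma|^{-(d-n)(1/2-1/r)},\qquad t,s\in(-\pi,\pi),\ |\gamma|>1,
\]
which you in fact wrote down at the outset and then set aside. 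One applies Hardy--Littlewood--Sobolev in the continuous variables $t,s\in(-\pi,\pi)$ (this is where an $n$-admissible pair $(q_2,r)$ appears) and then discrete HLS in $\gamma$ (this is where the $(d-n)$-admissible pair $(p,r)$ appears), yielding control in $\ell^{p'}_\gamma L^{q_2'}(I_\gamma;L^{r'})$. The two pieces are reconciled by the elementary inclusions $L^{q_1'}(I_\gamma)\subset L^{q_2'}(I_\gamma)$ (since $q_1<q_2$ when $(q_1,r)$ is $d$-admissible and $(q_2,r)$ is $n$-admissible) and $\ell^{p'}\subset\ell^2$. No interpolation between isotropic and anisotropic norms is needed; the isotropic $L^r(\R^d)$ norm is kept throughout because the off-diagonal dispersive estimate is itself isotropic.
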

The above statement can be understood as follows. Locally in time, we
have the same Strichartz estimates as for $e^{it\Delta}$, a fact that
is standard in the presence of a smooth potential growing at most
quadratically in space (see e.g. \cite{CazCourant}). In the present
case, these estimates are made global in time thanks to the dispersion
in the $y$ directions, corresponding to the free Hamiltonian $H_2$:
there are $d-n$ such directions, so the assumption for $(p,r)$ to be
$(d-n)$-admissible appears very natural. 
\begin{proof}
The scheme of this proof follows the same general method as for
standard Strichartz inequalities, by using duality and the $TT^*$ argument of Tomas-Stein.
The added novelty here is strongly inspired by the
arguments in \cite{CuVi11,HaPa13}.

The homogeneous inequalities \eqref{eq:strichartzhom} and \eqref{eq:strichartzhomdual} are dual to
each other and therefore equivalent.

We now prove \eqref{eq:strichartzhomdual}. Let $F, G \in C^\infty(\R \times \R^d)$. Then, 
\eqref{eq:strichartzhomdual} is equivalent to the bilinear inequality
\begin{multline}\label{eq:bilinearStrichartz}
      \left|\<\int_{t\in\R} e^{i t H}F(t)dt,\int_{s\in\R} e^{i s H}G(s)ds\>_{L^2}
      \right| \\ \lesssim \|F\|_{\ell^{p'}_\gamma
      L^{q'}(I_\gamma;L^{r'}(\R^d))}\|G\|_{\ell^{p'}_\gamma
      L^{q'}(I_\gamma;L^{r'}(\R^d))},
\end{multline}
which, again by duality, is in turn equivalent to
\begin{equation}\label{eq:globalinhomstrichartz}
      \left\| \int_{s\in\R} e^{-i (t-s) H}G(s)ds \right\|_{\ell^{p}_\gamma
      L^{q}(I_\gamma;L^{r}(\R^d))} \lesssim \|G\|_{\ell^{p'}_\gamma
      L^{q'}(I_\gamma;L^{r'}(\R^d))}.
\end{equation}
To prove \eqref{eq:bilinearStrichartz} we use a partition of unity in time
$$ \sum_{\gamma \in \Z} \chi(t-\pi \gamma)=1, \quad \forall {t \in \R}
\qquad \mbox{with} \quad \mbox{supp}\chi \subset [-\pi, \pi].$$
We thus have
\begin{multline*}
\<\int_{t\in\R} e^{i t H}F(t)dt,\int_{s\in\R} e^{i s H}G(s)ds\>_{L^2}
 \\ = \sum_{\alpha, \beta \in \Z} \iint_{t,s \in \R}
 \< \chi(t-\pi \alpha) e^{i t H}F(t), \chi(t-\pi \beta) e^{i s H}G(s)\>_{L^2} dt\,ds.
\end{multline*}
This sum is now bounded by splitting it into the diagonal terms, $|\alpha-\beta|\le 1$, and
the non-diagonal ones, $|\alpha-\beta|> 1$.

The sum of the diagonal terms is bounded as follows
\begin{eqnarray}
\lefteqn{\sum_{|\alpha-\beta|\le 1} \left| \iint_{t,s \in \R}
\< \chi(t-\pi \alpha) e^{i t H}F(t), \chi(t-\pi \beta) e^{i s H}G(s)\>_{L^2} dt\,ds \right|}
\nonumber \\
& & \lesssim \sum_{|\alpha-\beta|\le 1} \left\| \int_{-\pi}^{\pi}
\chi(t) e^{i (t+\pi \alpha) H}F(t+\pi \alpha) dt \right\|_{L^2}
\left\| \int_{-\pi}^{\pi}
\chi(s) e^{i (s+\pi \beta) H}G(s+\pi \beta) ds \right\|_{L^2} \nonumber \\
& & = \sum_{|\alpha-\beta|\le 1} \left\| \int_{-\pi}^{\pi}
\chi(t) e^{i t H}F(t+\pi \alpha) dt \right\|_{L^2}
\left\| \int_{-\pi}^{\pi}
\chi(s) e^{i s H}G(s+\pi \beta) ds \right\|_{L^2} \nonumber,
\end{eqnarray}
where, in this last step, we have used the fact that the propagator $e^{-i t H}$ is unitary
in $L^2$. At this point, we use Strichartz estimates in their dual form (analogous to \eqref{eq:strichartzhomdual}), which are available locally in time, for the partially confined
operator $H$, with $d$-admissible pairs $(q,r)$,
\begin{multline}\label{eq:diagonal}
\lesssim \sum_{|\alpha-\beta|\le 1} \left\|F(\cdot+\pi \alpha) \right\|_{L^{q'}([-\pi,\pi];L^{r'}(\R^d))}
	\left\| G(\cdot+\pi \beta)  \right\|_{L^{q'}([-\pi,\pi];L^{r'}(\R^d))}  \\
	\lesssim \left\|F \right\|_{\ell^{2}_\gamma
		L^{q'}(I_\gamma;L^{r'}(\R^d))}
	\left\| G  \right\|_{\ell^{2}_\gamma
		L^{q'}(I_\gamma;L^{r'}(\R^d))}.
\end{multline}
For the non-diagonal part of the sum --- the crucial and deeper one
--- we proceed with a different approach, involving a localized $TT^*$ 
argument.
\begin{eqnarray}
\lefteqn{\sum_{|\alpha-\beta|> 1} \left| \iint_{t,s \in \R}
	\< \chi(t-\pi \alpha) e^{i t H}F(t), \chi(t-\pi \beta) e^{i s H}G(s)\>_{L^2} dt\,ds \right|}
\nonumber \\
& & = \sum_{|\alpha-\beta|> 1} \left| \iint_{t,s \in \R} \chi(t)\chi(s)
	\< F(t+\pi\alpha), e^{-i(t- s + \pi(\alpha-\beta)) H}G(s+\pi\beta)\>_{L^2} dt\,ds \right| \nonumber \\
	& & \lesssim \sum_{\substack{\beta \in \Z\\ |\gamma |>1}} 
	\left| \iint_{(t,s) \in [-\pi,\pi]^2} \chi(t)\chi(s)
	\< F(t+\pi(\beta+\gamma)), e^{-i(t- s + \pi\gamma) H}G(s+\pi\beta)\>_{L^2} dt\,ds \right| \nonumber \\
	& & \lesssim \sum_{\substack{\beta \in \Z\\ |\gamma |>1}} 
	\iint_{(t,s) \in [-\pi,\pi]^2} 
	\left\| F(t+\pi(\beta+\gamma))\right\|_{L^{r'}}
	\left\| e^{-i(t- s + \pi\gamma) H}G(s+\pi\beta) \right\|_{L^r}. \nonumber
\end{eqnarray}
We have now reached the stage where we use the
time decay estimate of the dispersive propagator. For that, we 
exploit the commutativity property of the partial propagators
$e^{-itH_1}$ and $e^{-itH_2}$, 
as well as the periodicity in time of $e^{-itH_1}$.

Thus, following the remarks at the beginning of this section, we know that
$$e^{-i(t- s + \pi\gamma) H}=e^{-i(t- s + \pi\gamma) H_1} e^{-i(t- s + \pi\gamma) H_2}.$$
The propagator for $H_2$ corresponds to the free Schr\"odinger case in the $y$ variables,
thus has a global time decay with rate $t^{-(d-n)/2}$, as seen in \eqref{eq:dispfree}. Whereas the propagator for $H_1$ corresponds to the time periodic harmonic oscillator in the $x$ variables, with local in time decay with rate
$t^{-n/2}$ from \eqref{eq:dispharmo}. So, doing the full $L^1(\R^d) \to L^\infty(\R^d)$ estimate 
for the propagator of $H$ by first splitting
the variables $x$ and $y$, then applying the partial propagators one at a time for each of the corresponding variables and using the periodicity of $H_1$, we obtain the combined time decay rate
$$\|e^{-i(t- s + \pi\gamma) H}f\|_{L^\infty_{x,y}(\R^d)}\lesssim
\frac{1}{|t-s|^{n/2}}\frac{1}{|\gamma|^{(d-n)/2}}\|f\|_{L^1_{x,y}(\R^d)},$$
which is local in the $t,s$ variables (from the partially confining operator $H_1$ acting on the $x$ variables) and global in $\gamma$ (from the free operator $H_2$ acting on the $y$ variables).
Finally, interpolating the above $L^1 \to L^\infty$ estimate with the $L^2$ conservation, we 
obtain
the intermediate $L^{r'} \to L^{r}$ decay to be used within the $TT^*$ proof of the Strichartz estimates,
\begin{equation*}
\|e^{-i(t- s + \pi\gamma) H}f\|_{L^r_{x,y}(\R^d)}\lesssim
\frac{1}{|t-s|^{n(1/2-1/r)}}
\frac{1}{|\gamma|^{(d-n)(1/2-1/r)}}\|f\|_{L^{r'}_{x,y}(\R^d)},
\end{equation*}
for $2\le r \le \infty$. 

So, plugging this estimate back into the non-diagonal sum inequality,
we thus obtain
\begin{align*}
&\lesssim \sum_{\substack{\beta \in \Z\\ |\gamma |>1}} 
	\iint_{(t,s) \in [-\pi,\pi]^2} 
	\left\| F(t+\pi(\beta+\gamma))\right\|_{L^{r'}}
	\frac{1}{|t-s|^{n(\frac{1}{2}-\frac{1}{r})}}
	\frac{1}{|\gamma|^{(d-n)(\frac{1}{2}-\frac{1}{r})}}\|G(s+\pi\beta)\|_{L^{r'}} \\
&  \lesssim \sum_{\substack{\beta \in \Z\\ |\gamma |>1}} 
\frac{1}{|\gamma|^{(d-n)(\frac{1}{2}-\frac{1}{r})}}
\iint_{(t,s) \in [-\pi,\pi]^2} 
\frac{1}{|t-s|^{n(\frac{1}{2}-\frac{1}{r})}}
\left\| F(t+\pi(\beta+\gamma))\right\|_{L^{r'}}
\|G(s+\pi\beta)\|_{L^{r'}}\\
& \lesssim \sum_{\substack{\beta \in \Z\\ |\gamma |>1}} 
\frac{1}{|\gamma|^{(d-n)(\frac{1}{2}-\frac{1}{r})}}
\|F(\cdot+\pi(\beta+\gamma))\|_{L^{q'}([-\pi,\pi];L^{r'}(\R^d))}
\|G(\cdot+\pi\beta)\|_{L^{q'}([-\pi,\pi];L^{r'}(\R^d))} ,
\end{align*}
where we have used the Hardy-Littlewood-Sobolev inequality in the
$t,s$ variables, in the last 
step, for $(q,r)$ now an $n$-admissible pair. To conclude this
estimate, we again 
apply the Hardy-Littlewood-Sobolev inequality, this time in its discrete
version for the $\gamma$ variable, to obtain 
\begin{equation}\label{eq:offdiagonal}
\lesssim \left\|F \right\|_{\ell^{p'}_\gamma
	L^{q'}(I_\gamma;L^{r'}(\R^d))}
\left\| G  \right\|_{\ell^{p'}_\gamma
	L^{q'}(I_\gamma;L^{r'}(\R^d))},
\end{equation}
whenever
$$\frac 2 p \le (d-n)\left(\frac 1 2 - \frac 1 r \right),$$
in particular for $(p,r)$ a $(d-n)$-admissible pair.

To finish the proof of \eqref{eq:bilinearStrichartz} we gather the
diagonal and non-diagonal estimates, \eqref{eq:diagonal} and
\eqref{eq:offdiagonal}, to obtain 
\begin{multline*}
\left|\<\int_{t\in\R} e^{i t H}F(t)dt,\int_{s\in\R} e^{i s H}G(s)ds\>_{L^2}
\right| \\ \lesssim
\left\|F \right\|_{\ell^{2}_\gamma
	L^{q_1'}(I_\gamma;L^{r'}(\R^d))}
\left\| G  \right\|_{\ell^{2}_\gamma
	L^{q_1'}(I_\gamma;L^{r'}(\R^d))}\\
+
\left\|F \right\|_{\ell^{p'}_\gamma
	L^{q_2'}(I_\gamma;L^{r'}(\R^d))}
\left\| G  \right\|_{\ell^{p'}_\gamma
	L^{q_2'}(I_\gamma;L^{r'}(\R^d))},
\end{multline*}
where the pair $(q_1,r)$ is $d$-admissible, while $(q_2,r)$ is $n$-admissible. Therefore
$q'_2 < q'_1$ and using the inclusion $L^{q'_1}(I_\gamma) \subset L^{q'_2}(I_\gamma)$, while
$\ell^{p'} \subset \ell^{2}$, we obtain \eqref{eq:bilinearStrichartz}. Thus, we have established
\eqref{eq:strichartzhom} and \eqref{eq:strichartzhomdual}, as well as \eqref{eq:globalinhomstrichartz}.

There still remains establishing \eqref{eq:strichartzinhom} to conclude the proof of the theorem. But this 
finite interval version follows from the similar global integral estimate \eqref{eq:globalinhomstrichartz} by using the now standard Christ-Kiselev lemma, together with interpolation between 
admissible triplets, just as in the analogous final stage of a classical Strichartz estimate proof.
\end{proof}

\begin{remark}[More general potentials]\label{rem:irrat}
The two properties of $e^{-it H_1}$ that we have used in the proof of
Theorem~\ref{theo:strichartz} are the existence of local dispersive
estimates, and the fact that such estimates are available periodically
in time. The first point remains for smooth potentials which grow at
most quadratically, from \cite{Fujiwara}. The fact that these
dispersive estimates are 
  available periodically in time remains for instance when the
  harmonic potential is suitably perturbed with a smooth one (Schwartz class),
  so the new operator has the same spectrum as the harmonic
  oscillator; this has been established in the case $n=1$ in
  \cite{McKeTr81}.  
Also, we may consider more general quadratic potentials, when $n\ge 2$, 
\begin{equation*}
  \sum_{j=1}^n \omega_j^2\frac{x_j^2}{2},\quad \omega_j>0.
\end{equation*}
If the $\omega_j$'s
  are rationally dependent, then the above proof can easily be
  repeated, since local in time dispersive estimates remain, as well
  as time periodicity of the propagator (each component leads to
  dispersive estimates which are $\pi/\omega_j$-periodic in time). 
On the other hand, if the $\omega_j$'s are rationally independent,
either by using Hermite functions (eigen decomposition) or by invoking
Mehler's formula, we see that the time periodicity is lost. 
  The argument that we have followed to obtain global Strichartz
  estimates no longer 
  works; the dynamics might be rather involved, typically due to the
  presence of small divisors. On the other hand, since
  Proposition~\ref{prop:generalization} only assumes that the flow
  associated to the operator $P$ ($=\frac{1}{2}\Delta_x +V(x)$ here),
  is bounded on $L^2$, its conclusion remains valid for \emph{all}
  quadratic potentials. 
\end{remark}

\section{Existence of wave operators and 
  small data scattering}
\label{sec:wave}

In this section, we prove Theorem~\ref{theo:wave}. The existence of
wave operators is obtained by a fixed
point argument in a suitable space, constructed in view of global
Strichartz estimates provided by
Theorem~\ref{theo:strichartz}. Asymptotic completeness for small data
follows essentially from the same estimates, plus a standard bootstrap
argument. 

\subsection{Vector fields}
\label{sec:Z}

As pointed out in the introduction, proving the existence of wave
operator in $\Sigma$ in the absence of external potential, that is for
\eqref{eq:NLS},  relies on the use of the
vector field $x+it\nabla$. In the case of \eqref{eq:NLSP}, this means
that we consider
\begin{equation*}
  J = y+it\nabla_y = it\; e^{i|y|^2/(2t)}\nabla_y \(e^{-i|y|^2/(2t)}\cdot\).
\end{equation*}
In view of the second identity, $J$ acts on the gauge invariant nonlinearity
$|u|^{2\si}u$ like $\nabla_y$, and Gagliardo-Nirenberg inequalities
involving $\nabla_y$ export to similar inequalities in terms of $J$,
with an extra time dependent factor:
\begin{equation}\label{eq:GN0}
  \|f\|_{L^r(\R^d)}\le
  \frac{C_r}{|t|^{(d-n)(1/2-1/r)}}\|f\|_{L^2(\R^d)}^{1-d(1/2-1/r)}
\|(\nabla_x,J)f\|_{L^2(\R^d)}^{d(1/2-1/r)}, 
\end{equation}
for all $2\le r\le \frac{2d}{d-2}$ ($2\le r<\infty$ if $d=2$). 
Finally, $J$ commutes with the linear part of \eqref{eq:NLSP},
$ \left[ i\d_t -H,J\right]=0$.

We now turn to the $x$-dependent part of $H$. In view of the above
weighted Gagliardo-Nirenberg inequality, we will get the same time
decay as on $\R^{d-n}$ provided that we can estimate $\nabla_x
u$ (in $L^2$). However, unlike $J$, $\nabla_x$ does not commute with
the linear 
part of \eqref{eq:NLSP}:
\begin{equation*}
  \left[ i\d_t -H,\nabla_x\right]= -x.
\end{equation*}
We remark that 
\begin{equation*}
   \left[ i\d_t -H,x\right]= \nabla_x,
\end{equation*}
so it is possible to obtain a closed system of estimates for
$(u,Ju,\nabla_x u,xu)$. This remark can be used to construct a
solution to \eqref{eq:NLSP} locally in time, thanks to the Gronwall
lemma; see \cite{Oh} (see also \cite{CaCCM}). However, the above
commutators generate an exponentially growing factor after the use of
the Gronwall lemma, an aspect which ruins the algebraic time decay provided
by \eqref{eq:GN0} when large time is considered, like in scattering
theory. Therefore, we need to replace the operators 
$\nabla_x$ and $x$ by adapted operators which enjoy properties similar
to those of $J$. In the case of the harmonic potential, such operators
are available: see e.g. \cite{CaCCM}. We introduce
\begin{align*}
  A_1(t) & = x\sin t -i \cos t\nabla_x,&&\quad A_2(t) =
  x\cos t +i \sin t\nabla_x \\
A_3(t) & = -i\nabla_y,&&\quad A_4(t) =
  y+it\nabla_y.
\end{align*}
To ease the forthcoming formulas, we also set
\begin{equation*}
  A_0 = {\rm Id}. 
\end{equation*}
The operators $A_3$ and $A_4$ are the standard ones in the absence of
external potential. The operators $A_1$ and $A_2$ are adapted to the
harmonic potential, and satisfy
\begin{equation*}
  \begin{pmatrix}
    A_1(t)\\
-A_2(t)
  \end{pmatrix}
= 
\begin{pmatrix}
  \sin t & \cos t\\
-\cos t & \sin t
\end{pmatrix}
\begin{pmatrix}
  x \\
-i\nabla_x
\end{pmatrix}.
\end{equation*}
Indeed, the operators $A_1$ and $A_2$ account for the
fact that the harmonic oscillator rotates the phase space at a speed
which is uniform in $(x,\xi)$. In particular, we have the pointwise
identity for, say, $f\in \Sch(\R^d)$,
\begin{equation}
  \label{eq:energy-rotation}
  |A_1(t)f(x,y)|^2+|A_2(t)f(x,y)|^2 = |x f(x,y)|^2 + |\nabla_x
  f(x,y)|^2,\quad \forall t\in \R. 
\end{equation}
\begin{lemma}\label{lem:Z}
 For $1\le j\le 4$, the operators $A_j$ satisfy the following properties.
 \begin{itemize}
 \item They correspond to the conjugation of gradient and momentum by
   the free flow,
   \begin{align*}
   &  A_1 (t) = e^{-itH}(-i\nabla_x)e^{itH},\quad A_2(t) =
     e^{-itH}x\, e^{itH} ,\\
&  A_3 (t) = e^{-itH}(-i\nabla_y) e^{itH},\quad A_4(t) =
     e^{-itH}y\, e^{itH}.
   \end{align*}
Therefore, they commute with the linear part of \eqref{eq:NLSP}:
   $[i\d_t-H,A_j(t)]=0$.
\item They act on gauge invariant nonlinearities like derivatives. In
  particular, we have the pointwise estimate
  \begin{equation*}
    \left| A_j(t)\(|u|^{2\si}u\) \right|\lesssim
    |u|^{2\si}|A_j(t)u|.
  \end{equation*}
\item Weighted Gagliardo-Nirenberg inequalities: for all  $2\le r\le
  \frac{2d}{d-2}$ ($2\le r<\infty$ if $d=2$), 
  there exists $K_r$ such that for all $f\in
  \Sigma$,
  $t\not =0$,
  \begin{equation*}
      \|f\|_{L^r(\R^d)}\le
  \frac{K_r}{\<t\>^{(d-n)(1/2-1/r)}}\|f\|_{L^2(\R^d)}^{1-\delta}
\(\sum_{j=1}^4\|A_j(t)f\|_{L^2(\R^d)}\)^{\delta}
  \end{equation*}
with
  $\delta = d(1/2-1/r)$.
 \end{itemize}
\end{lemma}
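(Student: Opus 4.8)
The plan is to establish each of the three bullet points in Lemma~\ref{lem:Z} essentially by unwinding the definitions of the $A_j(t)$ and exploiting the algebra of the harmonic oscillator. For the first bullet, I would argue as follows. Since $H = H_1 + H_2$ with $[H_1,H_2]=0$, and since $A_3,A_4$ involve only the $y$-variables while $A_1,A_2$ involve only the $x$-variables, the conjugation identities decouple: $e^{itH}(-i\nabla_y)e^{-itH} = e^{itH_2}(-i\nabla_y)e^{-itH_2}$ and similarly for $y$, which are the classical Heisenberg-evolution identities for the free Schr\"odinger flow, giving $A_3(t)=-i\nabla_y$ (since $-i\nabla_y$ commutes with $H_2$) and $A_4(t)=y+it\nabla_y$. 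For $A_1,A_2$, I would compute $\frac{d}{dt}\bigl(e^{-itH_1}(-i\d_x)e^{itH_1}\bigr) = e^{-itH_1}\,i[H_1,-i\d_x]\,e^{itH_1}$ and $\frac{d}{dt}\bigl(e^{-itH_1}x\,e^{itH_1}\bigr)=e^{-itH_1}\,i[H_1,x]\,e^{itH_1}$; using $[H_1,\d_x]=x$ and $[H_1,x]=-\d_x$ (up to signs and factors), this yields the linear ODE system whose solution is exactly the rotation matrix displayed just before the lemma, with initial data $-i\d_x$ and $x$ at $t=0$. Hence $A_1(t)=e^{-itH}(-i\nabla_x)e^{itH}$ and $A_2(t)=e^{-itH}x\,e^{itH}$. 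The commutation $[i\d_t-H,A_j(t)]=0$ is then immediate: differentiating $A_j(t)=e^{-itH}B_je^{itH}$ for a $t$-independent $B_j$ gives $i\d_t A_j = [H,A_j]$, i.e. $(i\d_t-H)A_j = A_j(i\d_t-H)$ as operators.

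For the second bullet, since each $A_j(t)$ is of the form $e^{-i\theta_j(t,x,y)}\,(\text{const}\cdot\nabla)\,e^{i\theta_j(t,x,y)}$ plus possibly a multiplication operator with real symbol — concretely $A_1$ and $A_2$ are real-linear combinations of $x$ (a real multiplication operator) and $-i\nabla_x$, and can be written in the gauge-transform form $A_1(t) = -i\cos t\, e^{i\phi}\nabla_x(e^{-i\phi}\cdot)$ for a suitable real phase $\phi$ when $\cos t\neq 0$, with an analogous expression for $A_2$, while $A_4 = it e^{i|y|^2/(2t)}\nabla_y(e^{-i|y|^2/(2t)}\cdot)$ as already noted and $A_3=-i\nabla_y$ — the pointwise bound follows from the chain rule: for a smooth gauge-invariant $f(z)=|z|^{2\sigma}z$ one has $|\nabla(f(u))| = |f'(u)\cdot\nabla u| \lesssim |u|^{2\sigma}|\nabla u|$ pointwise (treating $\nabla$ as $\nabla_x$ or $\nabla_y$), and conjugating by a unimodular factor commutes with taking $f$ of a gauge-invariant nonlinearity, so the phase drops out; for the multiplication part $xu$, $|x\,(|u|^{2\sigma}u)| = |u|^{2\sigma}|xu|$ exactly. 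Assembling the at most two terms for each $A_j$ with the triangle inequality gives $|A_j(t)(|u|^{2\sigma}u)|\lesssim |u|^{2\sigma}|A_j(t)u|$.

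For the third bullet, the weighted Gagliardo--Nirenberg inequality, I would start from the standard interpolation inequality $\|g\|_{L^r(\R^d)}\le C_r\|g\|_{L^2}^{1-\delta}\|\nabla_{x,y} g\|_{L^2}^{\delta}$ with $\delta=d(1/2-1/r)$, valid for $2\le r\le 2d/(d-2)$, and apply it to $g=e^{itH}f$. Since $e^{itH}$ is unitary on $L^2$, $\|g\|_{L^2}=\|f\|_{L^2}$, while $\|\nabla_{x,y}g\|_{L^2}^2 = \|\nabla_x e^{itH}f\|_{L^2}^2+\|\nabla_y e^{itH}f\|_{L^2}^2 = \|A_1(t)f\|_{L^2}^2+\|A_3(t)f\|_{L^2}^2$ by the conjugation identities of the first bullet (the $i$'s are harmless) — this bounds $\|\nabla_{x,y}g\|_{L^2}$ by $\sum_{j=1}^4\|A_j(t)f\|_{L^2}$. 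This gives the inequality with $\|e^{itH}f\|_{L^r}$ on the left, which is \emph{not} the same as $\|f\|_{L^r}$, so this direct route only controls the "wrong" norm; instead I would apply the inequality to $g=e^{-itH}f$ is also not directly it. The correct route is the one used in the potential-free case: apply the inequality to $f$ itself after rewriting $\nabla_y f$ and (for $t\neq 0$) $x f$, $\nabla_x f$ in terms of the $A_j$. Precisely, from the rotation identity \eqref{eq:energy-rotation}, $\|xf\|_{L^2}^2+\|\nabla_x f\|_{L^2}^2 = \sum_{j=1}^2\|A_j(t)f\|_{L^2}^2$ for every $t$, so in particular $\|\nabla_x f\|_{L^2}\le \|A_1(t)f\|_{L^2}+\|A_2(t)f\|_{L^2}$; and the factorization $J=A_4(t)=it\,e^{i|y|^2/(2t)}\nabla_y(e^{-i|y|^2/(2t)}\cdot)$ gives, for $t\neq 0$, $\|\nabla_y f\|_{L^2}\le \frac{1}{|t|}\|A_4(t)f\|_{L^2}$ — so the time-decay in the $y$-directions comes from here. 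Feeding $\|\nabla_{x,y}f\|_{L^2}\lesssim \|\nabla_x f\|_{L^2} + \|\nabla_y f\|_{L^2} \lesssim \sum_{j=1}^2\|A_j(t)f\|_{L^2} + \frac{1}{|t|}\|A_4(t)f\|_{L^2}$ into the interpolation inequality, and then noting that only $d-n$ of the $d$ gradient directions carry the $1/|t|$ factor while the other $n$ carry none — so that after optimizing, the genuine time decay rate is $|t|^{-(d-n)(1/2-1/r)}$ rather than $|t|^{-d(1/2-1/r)}$ — yields the claimed bound for $|t|\gtrsim 1$; for $|t|\lesssim 1$ one uses instead the trivial (unweighted) Gagliardo--Nirenberg inequality together with $\|\nabla_x f\|_{L^2}+\|\nabla_y f\|_{L^2}\lesssim \sum_j\|A_j(t)f\|_{L^2}$ (at $t$ of order one the matrices in $A_1,A_2$ are bounded with bounded inverse, and $A_3=-i\nabla_y$ directly controls $\nabla_y f$), and combining the two regimes gives the stated inequality with the Japanese-bracket weight $\langle t\rangle^{-(d-n)(1/2-1/r)}$.

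The main obstacle is the third bullet: one has to be careful that it is the \emph{marginal} $(d-n)$ of dispersive directions, not the full $d$, that governs the decay rate, which forces the book-keeping described above — splitting $\nabla_{x,y}$ into its $x$- and $y$-parts, applying the $t$-weight only to $\nabla_y$ via the $A_4$-factorization, and treating small and large $|t|$ separately. The first two bullets are purely algebraic and should be routine once the conjugation identities are set up.
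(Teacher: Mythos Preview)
Your arguments for the first two bullets are correct and coincide with the paper's: the conjugation identities follow from solving the Heisenberg ODE for the harmonic oscillator, and the action on gauge-invariant nonlinearities follows from the factorizations
\begin{align*}
A_1(t)f &= -i\cos t\, e^{-\frac{i}{2}|x|^2\tan t}\nabla_x\bigl(e^{\frac{i}{2}|x|^2\tan t}f\bigr),\\
A_2(t)f &= i\sin t\, e^{\frac{i}{2}|x|^2\cot t}\nabla_x\bigl(e^{-\frac{i}{2}|x|^2\cot t}f\bigr),\\
A_4(t)f &= it\, e^{i|y|^2/(2t)}\nabla_y\bigl(e^{-i|y|^2/(2t)}f\bigr),
\end{align*}
which is exactly what the paper records.

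For the third bullet, however, there is a genuine slip. You write that the factorization of $A_4$ gives $\|\nabla_y f\|_{L^2}\le \frac{1}{|t|}\|A_4(t)f\|_{L^2}$, and then feed this into Gagliardo--Nirenberg applied to $f$. That inequality is false: the factorization says $\bigl|\nabla_y\bigl(e^{-i|y|^2/(2t)}f\bigr)\bigr| = \frac{1}{|t|}|A_4(t)f|$ pointwise, which controls the gradient of the gauge-transformed function $g:=e^{-i|y|^2/(2t)}f$, not of $f$ itself. Indeed $\nabla_y f$ and $A_4(t)f$ differ by $yf$, and there is no reason for $\|\nabla_y f\|_{L^2}$ to decay at all as $|t|\to\infty$. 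The correct route --- and the one the paper has in mind when it says the inequality ``follows directly from the factorization'' --- is to apply Gagliardo--Nirenberg to $g$ rather than to $f$: since $|g|=|f|$ pointwise one has $\|g\|_{L^r}=\|f\|_{L^r}$ and $\|g\|_{L^2}=\|f\|_{L^2}$, while $|\nabla_x g|=|\nabla_x f|$ and $|\nabla_y g|=\frac{1}{|t|}|A_4(t)f|$. You already used this mechanism correctly in the second bullet; you just need to reuse it here instead of bounding $\nabla_y f$ directly.

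Once GN is applied to $g$, your ``after optimizing'' step also needs to be made precise. One clean way: before applying GN on $\R^d$, rescale $g$ in the $y$-variable only, setting $h(x,y)=g(x,ty)$. Then $\|\nabla_x h\|_{L^2}$, $\|\nabla_y h\|_{L^2}$, and $\|h\|_{L^2}$ all pick up the same factor $|t|^{-(d-n)/2}$ (the latter two using $|\nabla_y g|=\frac{1}{|t|}|A_4 f|$), while $\|h\|_{L^r}=|t|^{-(d-n)/r}\|f\|_{L^r}$. Standard GN on $\R^d$ applied to $h$ then yields exactly
\[
\|f\|_{L^r}\le \frac{C}{|t|^{(d-n)(1/2-1/r)}}\|f\|_{L^2}^{1-\delta}\bigl(\|\nabla_x f\|_{L^2}^2+\|A_4(t)f\|_{L^2}^2\bigr)^{\delta/2},
\]
after which $\|\nabla_x f\|_{L^2}\le \|A_1(t)f\|_{L^2}+\|A_2(t)f\|_{L^2}$ from \eqref{eq:energy-rotation} finishes the large-$|t|$ case; for $|t|\lesssim 1$ use $A_3=-i\nabla_y$ instead, as you say.
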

The last two points follow directly from the ``nonlinear'' counterpart
of the ``linear'' factorization of the first point:
\begin{align*}
  & A_1(t)f = -i\cos t\, e^{-\frac{i}{2}|x|^2 \tan t }
  \nabla _x\(e^{\frac{i}{2}|x|^2 \tan t}
  f \),\\
& A_2(t)f = i\sin t\, e^{i|x|^2 / (2\tan t) }
  \nabla _x\(e^{-i|x|^2 /(2 \tan t) }
  f \),\\
& A_4(t)f = it\,  e^{i|y|^2/(2t)}\nabla_y\( e^{-i|y|^2/(2t)}f\). 
\end{align*}
We emphasize that analogous operators are available for anisotropic
quadratic potentials. On the other hand, the existence of operators
enjoying the above three properties seems to be bound to potentials
which are polynomials of degree at most two (\cite{CaMi04}). 

In view of the first point in Lemma~\ref{lem:Z}, we have the
equivalence, in the sense of equivalence of norms:
\begin{align*}
  \|e^{itH} u(t)-u_-\|_\Sigma & \sim  
\sum_{j=0}^4\|A_j(t)u(t)-A_j(t)e^{-itH}  u_-\|_{L^2}\\
& \sim 
\sum_{j=0}^4\|A_j(t)u(t)-e^{-itH} A_j(0) u_-\|_{L^2}.
\end{align*}
\subsection{The fixed point argument}
\label{sec:fixed}
 Let $u_-\in \Sigma$. For $\gamma_0\le -1$, we denote 
 \begin{equation*}
   J_{\gamma_0} = (-\infty, \pi(\gamma_0+1)) = \bigcup_{\gamma\le
     \gamma_0}I_\gamma, 
 \end{equation*}
and we set
\begin{align*}
  X_{\gamma_0}& =\Big\{ u \in L^\infty(J_{\gamma_0};H^1(\R^d))\ ;\
  \text{ for all }j\in \{0,\dots,4\},\\
& \|A_j u\|_{L^\infty(J_{\gamma_0};L^2)} \le 2 \|A_j(0)u_-\|_{L^2},\quad
 \|A_j u\|_{\ell^p_{\gamma\le \gamma_0}L^q(I_\gamma;L^r)} \le 2C_r
\|A_j(0)u_-\|_{L^2}\Big\}, 
\end{align*}
where $C_r$  stems from Theorem~\ref{theo:strichartz}. The indices $p,q,r$ are
precised below: in view of Theorem~\ref{theo:strichartz}, they are
such that $(q,r)$ is $d$-admissible, and $(p,r)$ is $(d-n)$-admissible. 
Theorem~\ref{theo:wave} follows from a fixed point argument on
Duhamel's formula 
\begin{equation*}
  u(t) = e^{-itH}u_- -i\lambda\int_{-\infty}^t
  e^{-i(t-s)H}\(|u|^{2\si}u\)(s)ds=:\Phi(u)(t), 
\end{equation*}
applied in $X_{\gamma_0}$ for $\gamma_0\ll -1$ and $(p,q,r)$ well
chosen. Thanks to Theorem~\ref{theo:strichartz} and Lemma~\ref{lem:Z},
the argument becomes 
rather close to the proof the second point in Theorem~\ref{theo:NLS},
for which the reader is referred to, e.g., \cite{CazCourant,Ginibre}. 
\begin{proof}[Proof of Theorem~\ref{theo:wave}]
  The key step, for the stability of $X_{\gamma_0}$ under the action
  of the map $\Phi$ as well as for the contraction of $\Phi$ on
  $X_{\gamma_0}$ with respect to the weaker norm $\ell^p_{\gamma\le
    \gamma_0}L^q(I_\gamma;L^r)$ (Kato's method, see \cite{CazCourant}:
  $X_{\gamma_0}$ 
  equipped with that norm is complete, so it is not necessary to prove
  contraction for norms involving derivatives of $\Phi$), consists in
  controlling $B(|u|^{2\si}u)$ in, say, $\ell^{p'}_{\gamma\le
    \gamma_0}L^{q'}(I_\gamma;L^{r'})$, for all $B\in \{{\rm Id},
  A_1,\dots,A_4\}$. This is so thanks to Strichartz estimates, and
 to the first point in Lemma~\ref{lem:Z}. In view of the
  second point in Lemma~\ref{lem:Z} and H\"older inequality,
  \begin{equation*}
    \left\| B(|u|^{2\si}u)\right\|_{\ell^{p'}_{\gamma\le
    \gamma_0}L^{q'}(I_\gamma;L^{r'})}\lesssim \|u\|_{\ell^\theta_{\gamma\le
    \gamma_0}L^\omega(I_\gamma;L^k)}^{2\si}\|Bu\|_{\ell^p_{\gamma\le
    \gamma_0}L^q(I_\gamma;L^r)}, 
  \end{equation*}
\emph{with a constant independent of $\gamma_0$}, 
provided that
\begin{equation}\label{eq:holder}
  \frac{1}{p'}=\frac{1}{p}+\frac{2\si}{\theta},\quad
\frac{1}{q'}=\frac{1}{q}+\frac{2\si}{\omega},\quad
\frac{1}{r'}=\frac{1}{r}+\frac{2\si}{k}.
\end{equation}
If we can find $(p,q,r)$ with the above algebraic requirements, such
that 
\begin{equation*}
  u \in X_{\gamma_0}\Longrightarrow u\in \ell^\theta_{\gamma\le
    \gamma_0}L^\omega(I_\gamma;L^k),
\end{equation*}
with $(\theta,\omega,k)$ given like above, for some \emph{finite}
$\theta$, then the result follows, up to taking $|\gamma_0|$
sufficiently large. 
\smallbreak

Let $u\in X_{\gamma_0}$. In view of the last point in
Lemma~\ref{lem:Z},
\begin{equation*}
  \|u(t)\|_{L^k(\R^d)} \le \frac{C}{|t|^{(d-n)(1/2-1/k)}}, \quad t< \pi(\gamma_0+1),
\end{equation*}
where $C$ depends only on $d,n,k$ and $\|u_-\|_{\Sigma}$. Since
$|I_\gamma|=2\pi$, we infer for any $\omega$, $\gamma\le \gamma_0$,
\begin{equation*}
  \|u(t)\|_{L^\omega(I_\gamma;L^k(\R^d))} \le (2\pi)^{1/\omega}
  \|u(t)\|_{L^\infty(I_\gamma;L^k(\R^d))} \lesssim
  \frac{1}{|\gamma|^{(d-n)(1/2-1/k)}}.   
\end{equation*}
Therefore, 
\begin{equation*}
  X_{\gamma_0} \subset \ell^\theta_{\gamma\le
    \gamma_0}L^\omega(I_\gamma;L^k)\text{ as soon as
  }(d-n)\(\frac{1}{2}-\frac{1}{k}\) \theta>1. 
\end{equation*}
We distinguish two cases: $d=2$ and $d\ge 3$. 
\smallbreak

\noindent{\bf Case $d=2$.} Then necessarily, $n=1$.
We take $r = 2 +\eps$, with $0<\eps\ll 1$. In view of
\eqref{eq:holder}, we compute:
\begin{equation*}
  k=\frac{2\si(2+\eps)}{\eps},\quad p 
  \frac{4(2+\eps)}{\eps},\quad q=\frac{2(2+\eps)}{\eps},\quad \theta= 2\si
  \frac{4+2\eps}{4+\eps},\quad \omega= (2+\eps)\si,
\end{equation*}
hence
\begin{equation*}
 (d-n)\(\frac{1}{2}-\frac{1}{k}\) \theta = \frac{4\si
   +2(\si-1)\eps}{4+\eps}.  
\end{equation*}
This is larger than $1$ for $\eps>0$ sufficiently small as soon as
$\si>1$. 
\smallbreak

\noindent{\bf Case $d\ge 3$.} We consider the largest possible value
for $k$, which corresponds to the endpoint of Sobolev embedding,
\begin{equation*}
  k= \frac{2d}{d-2}. 
\end{equation*}
In view of \eqref{eq:holder}, we compute 
\begin{equation*}
  r = \frac{2d}{d-(d-2)\si}. 
\end{equation*}
Since the nonlinearity is energy-subcritical, $\si<2/(d-2)$, we have
$2\le r<2d/(d-2)$, so this value is acceptable. We also have
\begin{equation*}
  p = \frac{4d}{(d-n)\si(d-2)},\quad q = \frac{4}{\si(d-2)}, \quad
  \theta = \frac{4d \si}{2d-(d-n)(d-2)\si},
\end{equation*}
where the last formula defines a positive $\theta$, since
$\si<2/(d-2)$ and $n>0$. We infer
\begin{equation*}
 (d-n)\(\frac{1}{2}-\frac{1}{k}\) \theta =
 \frac{d-n}{d}\frac{4d \si}{2d-(d-n)(d-2)\si}.
\end{equation*}
This quantity is larger than $1$ if and only if
\begin{equation*}
  \si>\frac{2d}{d+2}\frac{1}{d-n},
\end{equation*}
which is the condition stated in Theorem~\ref{theo:wave}. 
\end{proof}
\subsection{Asymptotic completeness for small data}
\label{sec:ac-small}

We now turn to the proof of the second part of
Theorem~\ref{theo:wave}. 
 Resume exactly the same computations as the previous subsection. For
 $B\in \{{\rm Id}, 
  A_1,\dots,A_4\}$,
  \begin{equation*}
    \|Bu\|_{\ell^p_{\gamma\ge 1}L^q(I_\gamma;L^r)}\lesssim
    \|B(0)u_0\|_{L^2} +
    \|u\|_{\ell^\theta_{\gamma\ge 1}L^\omega(I_\gamma;L^k)}^{2\si}
\|Bu\|_{\ell^p_{\gamma\ge 1}L^q(I_\gamma;L^r)} .
  \end{equation*}
For $t\ge 0$, we estimate,
in view of the last point in
Lemma~\ref{lem:Z},
\begin{equation*}
  \|u(t)\|_{L^k(\R^d)} \le
  \frac{C}{\<t\>^{(d-n)(1/2-1/k)}}\|u_0\|^{1-\delta}\(\sum_{j=1}^4
  \|A_j(t)u\|_{L^2}\)^{\delta},
\end{equation*}
with $\delta=d(1/2-1/k)$. Now let 
\begin{equation*}
  M(t) = \sum_{j=0}^4 \(\|A_ju\|_{\ell^p_{0\le\gamma\lesssim t}L^q(I_\gamma;L^r)} +
\|A_ju\|_{L^\infty([0,t];L^2)}\). 
\end{equation*}
It satisfies
\begin{equation*}
  M(t)\lesssim \|u_0\|_{\Sigma} + M(t)^{1+2\si\delta},
\end{equation*}
with $\delta$ as above. Recall that standard bootstrap argument:  
\begin{lemma}[Bootstrap argument]\label{lem:bootstrap}
Let $M=M(t)$ be a nonnegative continuous function on $[0,T]$ such
that, for every $t\in [0,T]$, 
\begin{equation*}
  M(t)\le \eps_1 + \eps_2 M(t)^\kappa,
\end{equation*}
where $\eps_1,\eps_2>0$ and $\kappa >1$ are constants such that
\begin{equation*}
  \eps_1 <\left(1-\frac{1}{\kappa} \right)\frac{1}{(\kappa \eps_2)^{1/(\kappa
-1)}}\ ,\ \ \ M(0)\le  \frac{1}{(\kappa \eps_2)^{1/(\kappa-1)}}.
\end{equation*}
Then, for every $t\in [0,T]$, we have
\begin{equation*}
  M(t)\le \frac{\kappa}{\kappa -1}\ \eps_1.
\end{equation*}
\end{lemma}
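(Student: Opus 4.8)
The plan is to prove Lemma~\ref{lem:bootstrap} by a direct continuity argument. First I would set $\kappa>1$, $\eps_1,\eps_2>0$ as in the hypotheses, and introduce the auxiliary value $m_* := \frac{1}{(\kappa\eps_2)^{1/(\kappa-1)}}$, so that the assumptions read $\eps_1 < (1-\tfrac1\kappa) m_*$ and $M(0)\le m_*$. The function $g(x) := \eps_1 + \eps_2 x^\kappa - x$ controls the game: we have $g(M(t))\ge 0$ for all $t$ by hypothesis. I would analyze $g$ on $[0,\infty)$: it is smooth, $g(0)=\eps_1>0$, $g'(x) = \kappa\eps_2 x^{\kappa-1}-1$, which vanishes exactly at $x = m_*$, is negative for $x<m_*$ and positive for $x>m_*$. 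Hence $g$ is strictly decreasing on $[0,m_*]$ and strictly increasing on $[m_*,\infty)$, so its minimum is $g(m_*) = \eps_1 + \eps_2 m_*^\kappa - m_* = \eps_1 - (1-\tfrac1\kappa) m_* < 0$ by the first hypothesis. Therefore $g$ has exactly two roots $0<x_-<m_*<x_+$, and $\{x\ge 0 : g(x)\ge 0\} = [0,x_-]\cup[x_+,\infty)$.

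Next I would pin down $x_-$. Since $g$ is decreasing on $[0,m_*]$, the unique root $x_-$ in that interval satisfies $g(\tfrac{\kappa}{\kappa-1}\eps_1)\le 0$ would give $x_-\le \tfrac{\kappa}{\kappa-1}\eps_1$; but more simply, from $g(x_-)=0$ we get $x_- = \eps_1 + \eps_2 x_-^\kappa \ge \eps_1$, and I claim $x_- \le \frac{\kappa}{\kappa-1}\eps_1$. Indeed, dropping the nonnegative term $\eps_2 x_-^\kappa$ is the wrong direction; instead one checks directly that $g(\tfrac{\kappa}{\kappa-1}\eps_1) \ge 0$ is \emph{not} automatic, so I would argue as follows: on $[0,m_*]$ we have $g(x)\ge \eps_1 - x$ (since $\eps_2 x^\kappa\ge 0$), which is not sharp enough, so instead use that $x_- < m_*$ together with the bound $\eps_2 x_-^{\kappa-1} < \eps_2 m_*^{\kappa-1} = \tfrac1\kappa$, whence $x_- = \eps_1 + \eps_2 x_-^{\kappa-1}\cdot x_- < \eps_1 + \tfrac1\kappa x_-$, i.e. $(1-\tfrac1\kappa)x_- < \eps_1$, giving exactly $x_- < \tfrac{\kappa}{\kappa-1}\eps_1$.

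Finally I would run the continuity/connectedness argument. The set $\mathcal T := \{t\in[0,T] : M(t)\le x_-\}$ is nonempty since $M(0)\le m_*$ and $g(M(0))\ge 0$ force $M(0)\in[0,x_-]\cup[x_+,\infty)$; but $M(0)\le m_* < x_+$, so $M(0)\le x_-$ and $0\in\mathcal T$. By continuity of $M$, $\mathcal T$ is closed in $[0,T]$. It is also open in $[0,T]$: if $t_0\in\mathcal T$ with $M(t_0)\le x_-$, then either $M(t_0)<x_-$, and continuity keeps $M<x_-$ nearby, or $M(t_0)=x_-$, in which case I instead observe that $M(t_0)$ cannot lie in the gap $(x_-,x_+)$ for $t$ near $t_0$ — this is where one needs a short argument: by continuity $M(t)$ stays below $x_+ - \eta$ for $t$ near $t_0$, and since $g(M(t))\ge 0$ forces $M(t)\notin(x_-,x_+)$, we conclude $M(t)\le x_-$ there. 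Hence $\mathcal T$ is open, closed, and nonempty in the connected set $[0,T]$, so $\mathcal T=[0,T]$, i.e. $M(t)\le x_- \le \tfrac{\kappa}{\kappa-1}\eps_1$ for all $t\in[0,T]$, as claimed. The main subtlety — really the only one — is handling the endpoint case $M(t_0)=x_-$ in the openness step; the clean way is to note that $x_-$ being a root with $g$ strictly decreasing there means $M$ cannot cross into $(x_-,x_+)$ without violating $g(M)\ge 0$, since $g<0$ strictly on $(x_-,x_+)$.
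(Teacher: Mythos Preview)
Your argument is correct. The paper does not actually prove this lemma; it is simply recalled as the ``standard bootstrap argument'' and used without proof, so there is nothing to compare your approach against. Your analysis of $g(x)=\eps_1+\eps_2 x^\kappa-x$, the identification of the two roots $x_-<m_*<x_+$, the bound $x_-<\tfrac{\kappa}{\kappa-1}\eps_1$ via $\eps_2 x_-^{\kappa-1}<\eps_2 m_*^{\kappa-1}=1/\kappa$, and the connectedness argument on $\mathcal T$ are all sound. Two minor remarks on presentation: first, the exploratory detours (``dropping the nonnegative term is the wrong direction'', etc.) should be trimmed in a final write-up; second, the openness step is cleanest exactly as you say at the very end --- for $t$ near $t_0$, continuity gives $M(t)<x_+$, and since $g(M(t))\ge 0$ excludes $(x_-,x_+)$, you conclude $M(t)\le x_-$ --- so there is no need to split into the cases $M(t_0)<x_-$ and $M(t_0)=x_-$.
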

We
infer that $M$ is bounded for all $t\ge 
0$ provided that $\|u_0\|_{\Sigma}$ is sufficiently small. Once we
know that $A_ju\in L^\infty([0,\infty);L^2)$ for all $j\in
\{0,\dots,4\}$, we readily infer, by using Strichartz estimates again, 
  that $\(e^{itH}A_j(t)u\)_{t\ge 0}$ is a Cauchy sequence as $t\to
  \infty$, hence the  result.

\section{Anisotropic Morawetz estimates}
\label{sec:morawetz}

In this section, we prove anisotropic Morawetz estimates, as stated in
Proposition~\ref{prop:morawetz}. 
\begin{proof}
We proceed as in \cite{GiVe10}, by working on quadratic quantities
which appear naturally in the hydrodynamical reformulation of
\eqref{eq:NLSP}. We deduce a monotonicity formula for an appropriate
virial quantity and then we infer the a priori bounds for the desired
quantities. To shorten the notations, we set $z=(x, y)$. In
\cite{GiVe10} the calculations rely on the conservation laws for the
first two momenta related to the wave function: if $u$ is a solution
to \eqref{eq:NLSP}, then we have 
\begin{equation}\label{eq:cons_laws}
\left\{\begin{aligned}
&\d_t\rho+\diver J=0\\
&\d_tJ+\diver\(\RE(\nabla\bar u\otimes\nabla u)\)+\lambda\frac{\sigma}{\sigma+1}\nabla\rho^{\sigma+1}
+\rho\nabla V=\frac14\nabla\Delta\rho,
\end{aligned}\right.
\end{equation}
where $\rho(t, z):=|u(t, z)|^2$ and $ J(t, z):=\IM(\bar u\nabla u)(t, z)$.
Let us define the virial potential
\begin{equation*}
I(t):=\frac12\iint_{\R^d\times\R^d}\rho(t, z)a(|z-z'|)\rho(t, z')\,dzdz'=\frac12\langle\rho, a\ast\rho\rangle,
\end{equation*}
where $a$ is a sufficiently smooth weight function which will be
chosen later. Here $\langle\cdot,\cdot\rangle$ denotes the scalar
product in $L^2(\R^d)$. By using \eqref{eq:cons_laws}, we see that the time
derivative of $I(t)$ reads 
\begin{equation}\label{eq:mor_act}
\frac{d}{dt}I(t)=-\langle\rho,\nabla a\ast J\rangle=\iint\rho(t, z')\nabla a(|z-z'|)\cdot J(t, z)\,dz'dz=:M(t),
\end{equation}
where $M(t)$ is the Morawetz action. By using again the balance laws
\eqref{eq:cons_laws} we have 
\begin{equation*}
\begin{aligned}
\frac{d}{dt}M(t)=&-\langle J,\nabla^2a\ast J\rangle+\langle\rho,\nabla^2a\ast\RE(\nabla\bar u\otimes\nabla u)\rangle
+\frac{\lambda\sigma}{\sigma+1}\langle\rho,\Delta a\ast\rho^{\sigma+1}\rangle\\
&-\langle\rho,\nabla a\ast(\rho\nabla V)\rangle-\frac14\langle\rho,\Delta a\ast\Delta\rho\rangle\\
=&-\langle\IM(\bar u\nabla u), \nabla^2a\ast\IM(\bar u\nabla u)\rangle
+\langle\rho,\nabla^2a\ast(\nabla\bar u\otimes\nabla u)\rangle\\
&+\frac{\lambda\sigma}{\sigma+1}\langle\rho,\Delta a\ast\rho^{\sigma+1}\rangle
-\langle\rho,\nabla a\ast(\rho\nabla V)\rangle
-\frac14\langle\rho,\Delta a\ast\Delta\rho\rangle,
\end{aligned}
\end{equation*}
where in the second term we dropped the real part because of the
symmetry of $\nabla^2a$. For the first term we have 
\begin{equation*}
\begin{aligned}
\langle\IM(\bar u\nabla u),\nabla^2a\ast\IM(\bar u\nabla u)\rangle
=&\langle\bar u\nabla u, \nabla^2a\ast(\bar u\nabla u)\rangle
-\langle\RE(\bar u\nabla u),\nabla^2a\ast\RE(\bar u\nabla u)\rangle\\
=&\langle\bar u\nabla u, \nabla^2a\ast(\bar u\nabla u)\rangle
-\frac14\langle\nabla\rho,\nabla^2a\ast\nabla\rho\rangle,
\end{aligned}
\end{equation*}
hence we may write
\begin{equation}\label{eq:dtm}
\begin{aligned}
\frac{d}{dt}M(t)=&-\langle\bar u\nabla u,\nabla^2a\ast(\bar u\nabla u)\rangle
+\langle\rho, \nabla^2a\ast(\nabla\bar u\otimes\nabla u)\rangle\\
&+\frac14\langle\nabla\rho,\nabla^2a\ast\nabla\rho\rangle
-\frac14\langle\rho,\Delta a\ast\Delta\rho\rangle\\
&+\frac{\lambda \sigma}{\sigma+1}\langle\rho, \Delta
a\ast\rho^{\sigma+1}\rangle\\
&
-\iint \rho(t,z')\rho(t,z)\nabla a(|z-z'|)\cdot \nabla V(z)dzdz'.
\end{aligned}
\end{equation}
Let us consider the second line in \eqref{eq:dtm}. It is equal to
\begin{equation*}
\frac14\langle\nabla\rho,\nabla^2a\ast\nabla\rho\rangle
-\frac14\langle\rho,\Delta a\ast\Delta\rho\rangle
=\frac12\langle\nabla\rho, \nabla^2a\ast\nabla\rho\rangle
=\frac12\langle\nabla\rho, \Delta a\ast\nabla\rho\rangle.
\end{equation*}
Now, we notice the first two terms on the right hand side of
\eqref{eq:dtm} may be rewritten as 
\begin{multline*}
-\langle\bar u\nabla u, \nabla^2a\ast(\bar u\nabla u)\rangle
+\langle\rho, \nabla^2a\ast(\nabla\bar u\otimes\nabla u)\rangle=\\
\frac12\iint_{\R^d\times\R^d}\nabla^2a(|z-z'|)\cdot(\bar u(t, z')\nabla\bar u(t, z)-\bar u(t, z)\nabla\bar u(t, z'))\\
\cdot(u(t, z')\nabla u(t, z)- u(t, z)\nabla u(t, z'))\,dz'dz,
\end{multline*}
which is non-negative if the Hessian matrix $\nabla^2a$ is non-negative definite. On the other hand, by choosing the weight 
$a(|z|)$ depending only on $y\in\R^{d-n}$ the last term in
\eqref{eq:dtm} involving the potential $V=V(x)$, vanishes. Therefore,
the natural choice for the weight is $a(|z|)=|y|$. In this way we have 
\begin{equation}\label{eq:dtm2}
\begin{aligned}
\frac{d}{dt}M(t)\ge &\frac12\langle\nabla_y\rho,
\Delta_ya\ast\nabla_y\rho\rangle 
+\frac{\lambda\sigma}{\sigma+1}\langle\rho, \Delta_ya\ast\rho^{\sigma+1}\rangle.
\end{aligned}
\end{equation}
Notice that $\Delta_ya(|y|)=\frac{d-n-1}{|y|}$ is, up to a multiplicative constant, the integral kernel of the operator 
$(-\Delta_y)^{-\frac{d-n-1}{2}}$, that is,
\begin{equation*}
\((-\Delta_y)^{-\frac{d-n-1}{2}}f\)(y)=\int_{\R^{d-n}}\frac{c}{|y-y'|}f(y')\,dy'.
\end{equation*}
Thus, by recalling $z=(x, y)$, we obtain
\begin{multline*}
\iint_{\R^d\times\R^d}\frac{1}{|y-y'|}\nabla_y\rho(t, z')\cdot\nabla_y\rho(t, z)\,dz'dz\\
=\iiint_{\R^n\times\R^n\times\R^{d-n}}\nabla_y\rho(t, x, y)\cdot\nabla_y(-\Delta_y)^{-\frac{d-n-1}{2}}\rho(t, x', y)\,dxdx'dy.
\end{multline*}
Hence, if we define the marginal of the mass density
\begin{equation*}
R(t, y):=\int_{\R^n}\rho(t, x, y)\,dx,
\end{equation*}
the last integral also reads
\begin{equation*}
\int_{\R^{d-n}}\left||\nabla_y|^{\frac{3-(d-n)}{2}}R(t, y)\right|^2\,dy.
\end{equation*}
We now plug this expression into \eqref{eq:dtm2} and we integrate in time. Furthermore, since $\lambda>0$ the second term in the right hand side in \eqref{eq:dtm2} is positive. We then infer
\begin{equation}\label{eq:mor_est}
\int_0^T\int_{\R^{d-n}}\left||\nabla_y|^{\frac{3-(d-n)}{2}}R(t,
  y)\right|^2\,dydt\le C\sup_{t\in[0, T]}M(t).
\end{equation}
Furthermore, with our choice of the weight $a(|z|)$, we have
\begin{equation*}
M(t)=\iint\rho(t, z')\frac{y-y'}{|y-y'|}\cdot\IM(\bar u\nabla_yu)(t, z)\,dz'dz
\le\|u_0\|_{L^2(\R^d)}^3\|\nabla_yu(t)\|_{L^2(\R^d)},
\end{equation*}
where we have used the conservation of the $L^2$-norm of $u$ ($V$ is
real-valued). We infer that the right hand side in \eqref{eq:mor_est}
is uniformly bounded, 
\begin{equation*}
\int_0^T\int_{\R^{d-n}}\left||\nabla_y|^{\frac{3-(d-n)}{2}}R(t,
  y)\right|^2\,dydt\le C\|u_0\|_{L^2(\R^d)}^3
\sup_{t\in\R}\|\nabla_yu(t)\|_{L^2(\R^d)}.
\end{equation*}
By letting $T$ go to infinity, we prove \eqref{eq:mor_mar}.
\end{proof}
\begin{remark}
Let us come back to \eqref{eq:dtm2} in the proof of
Proposition~\ref{prop:morawetz}. We see that the second term in the
right hand side is nonnegative, and is equal to
\begin{multline*}
\frac{\lambda\sigma}{\sigma+1}\iint\rho(t,
z')\frac{d-n-1}{|y-y'|}\rho^{\sigma+1}(t, z)\,dz'dz\\ 
\le\frac{\lambda\sigma}{\sigma+1}\iint\frac{d-n-1}{|y-y'|}\rho^{\frac{\sigma+2}{2}}(t, z')\rho^{\frac{\sigma+2}{2}}(t, z)\, dz'dz.
\end{multline*}
As before, $\frac{1}{|y-y'|}$ is the integral kernel of $(-\Delta_y)^{-\frac{d-n-1}{2}}$, hence the last integral may be written in the following way,
\begin{equation*}
C\int_{\R^{d-n}}\left||\nabla_y|^{-\frac{d-n-1}{2}}\int_{\R^n}\rho^{\frac{\sigma+2}{2}}(t, x, y)\,dx\right|^2\,dy.
\end{equation*}
Consequently, from Morawetz estimates we also infer the following a priori bound
\begin{equation*}
\iint_{\R\times\R^{d-n}}\left||\nabla_y|^{-\frac{d-n-1}{2}}\int\rho^{(\sigma+2)/2}(t, x, y)\,dx\right|^2\,dydt
\le C\|u_0\|_{L^2}^3\sup_{t\in\R}\|\nabla_yu(t)\|_{L^2}.
\end{equation*}
This term does not appear in the statement of
Proposition~\ref{prop:morawetz},  since \eqref{eq:mor_mar} is the
only estimate we are going to use to prove asymptotic completeness. 
\end{remark}

\section{Asymptotic completeness}
\label{sec:ac}

\subsection{Outline of the proof and technical remarks}
\label{sec:outline}
The strategy of the proof of Theorem~\ref{theo:ac} is the same as in
e.g. \cite{PlVe09} (Section~5.1, for the one-dimensional example), or
\cite{BCD09}. Global in time estimates for the solution $u$ are
obtained by an inductive bootstrap argument:
\begin{lemma}\label{Global bootstrap argument}\label{lem:bootstrap-global}
  Let $t_0\in \R$, $h\in L^q_{\rm loc}([t_0,\infty))$ for some $1\le
  q\le \infty$. Suppose that
  there exist $C_0$, $\eta,\kappa>0$,  and $f\in
  L^p([t_0,\infty))$ with  $1\le p<\infty$, such that for all $t'\ge t\ge t_0$,
  \begin{equation}
    \label{eq:increment}
    \|h\|_{L^q([t,t'])}\le C_0 + \|f\|_{L^p([t,t'])}^\eta \|h\|_{L^q([t,t'])}^\kappa.
  \end{equation}
Then $h$ is globally integrable,
\begin{equation*}
   h\in L^q([t_0,\infty)).
\end{equation*}
\end{lemma}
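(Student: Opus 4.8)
The plan is to reduce the global claim to finitely many local ones: I would cut $[t_0,\infty)$ into pieces on which $\|f\|_{L^p}$ is as small as I like, run the elementary bootstrap of Lemma~\ref{lem:bootstrap} on each piece with constants that do not depend on the piece, and then reassemble. The only structural input used beyond Lemma~\ref{lem:bootstrap} is the absolute continuity of $s\mapsto\int_{t_0}^s|f|^p$, which is exactly where the hypothesis $p<\infty$ enters.

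First I would fix the partition. Since $p<\infty$, the map $s\mapsto\int_{t_0}^s|f(\tau)|^p\,d\tau$ is continuous, nondecreasing, and tends to $A:=\|f\|_{L^p([t_0,\infty))}^p<\infty$ as $s\to\infty$. Hence, for any prescribed $\eps>0$, one can choose an integer $N$ with $N\ge A/\eps^p$ and points $t_0<t_1<\cdots<t_{N-1}<t_N=\infty$ at which that integral takes the equally spaced values $jA/N$, so that $\|f\|_{L^p([t_{j-1},t_j])}\le\eps$ for every $j\in\{1,\dots,N\}$ (including the unbounded last piece, where the tail integral is $A-(N-1)A/N\le\eps^p$). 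The essential point is that, once $\eps$ is chosen, $N$ is finite.

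Next, on a single piece I would set $M_j(t'):=\|h\|_{L^q([t_{j-1},t'])}$; this vanishes at $t'=t_{j-1}$ and, for $q<\infty$, is continuous and nondecreasing. Applying \eqref{eq:increment} with $t=t_{j-1}$ gives
\begin{equation*}
  M_j(t')\le C_0+\|f\|_{L^p([t_{j-1},t'])}^{\eta}\,M_j(t')^\kappa\le C_0+\eps^\eta\,M_j(t')^\kappa .
\end{equation*}
When $\kappa>1$ I would invoke Lemma~\ref{lem:bootstrap} with $\eps_1=C_0$, $\eps_2=\eps^\eta$ and the same $\kappa$: the condition $M_j(t_{j-1})=0\le(\kappa\eps^\eta)^{-1/(\kappa-1)}$ is automatic, while $(\kappa\eps^\eta)^{-1/(\kappa-1)}\to\infty$ as $\eps\to0$, so the smallness requirement $C_0<(1-\tfrac1\kappa)(\kappa\eps^\eta)^{-1/(\kappa-1)}$ holds once $\eps$ is small enough, depending only on $C_0,\kappa,\eta$ and not on $j$. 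Lemma~\ref{lem:bootstrap} then yields $M_j(t')\le\tfrac{\kappa}{\kappa-1}C_0=:C_1$ on the whole piece, uniformly in $j$; for $j=N$ the piece is unbounded, so I would apply this on $[t_{N-1},T']$ for every finite $T'$ and let $T'\to\infty$, using monotone convergence to get $\|h\|_{L^q([t_{N-1},\infty))}\le C_1$. When $0<\kappa\le1$ no bootstrap is needed at all: from $M^\kappa\le 1+M$ (resp. directly when $\kappa=1$) one gets $M_j\le 2(C_0+1)$ on each piece as soon as $\eps^\eta\le\tfrac12$.

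Finally I would reassemble: $\|h\|_{L^q([t_0,\infty))}^q=\sum_{j=1}^N\|h\|_{L^q([t_{j-1},t_j])}^q\le N\,C_1^q<\infty$ when $q<\infty$, and $\|h\|_{L^\infty([t_0,\infty))}\le C_1$ when $q=\infty$, which is the assertion. The one place I expect to have to be careful is precisely the continuity/connectedness input to Lemma~\ref{lem:bootstrap} in the endpoint case $q=\infty$, where $t'\mapsto M_j(t')$ is a priori only monotone and left-continuous; this can be dealt with either by rerunning the open--closed argument underlying Lemma~\ref{lem:bootstrap} directly — observing that \eqref{eq:increment}, applied on the subintervals $[t,t']$ with $t$ just below a putative discontinuity, forbids an upward jump of $M_j$ across the level $C_1$ — or by first establishing the bound for every finite exponent $q'$ on each piece and then letting $q'\to\infty$. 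Everything else is routine.
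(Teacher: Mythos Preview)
Your argument is correct and follows essentially the same scheme as the paper's: partition $[t_0,\infty)$ into finitely many pieces on which $\|f\|_{L^p}$ is small, apply Lemma~\ref{lem:bootstrap} on each piece, and sum. The only differences are cosmetic --- the paper handles $\kappa\le1$ by Young's inequality to reduce to $\kappa>1$, whereas you give a direct algebraic bound --- with one caveat: of your two proposed fixes for the continuity issue at $q=\infty$, only the first (rerunning the open--closed argument, using \eqref{eq:increment} on shrinking intervals to rule out a jump) actually works, since the hypothesis \eqref{eq:increment} is given only for the fixed exponent $q$ and cannot be invoked for auxiliary finite $q'$.
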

\begin{proof}
  If $\kappa>1$, split $[t_0,\infty)$ into finitely many sub-intervals $I_j$,
 on which
  $\|f\|_{L^p(I_j)}^\eta\le \eps_2$, with
  $\eps_2=\eps_2(\eta,f,C_0,\kappa)$ so small that
  Lemma~\ref{lem:bootstrap} implies  
  \begin{equation*}
   \|h\|_{L^q(I_j)} \le \frac{\kappa}{\kappa-1}C_0,
  \end{equation*}
and the result follows. If $\kappa\le 1$, we may invoke Young
inequality $ab\lesssim a^q+b^{q'}$, $1<q<\infty$, to fall back into
the first case.
\end{proof}
\begin{remark}
  When $\kappa>1$, it is crucial in the above lemma that $C_0$ is independent of
  $t,t'$. If we suppose for instance that we have inequalities of the form
 \begin{equation*}
    M(t')\le M(t) + \|f\|_{L^p([t,t'])}^\eta M(t')^\kappa,
  \end{equation*}
then $M$ need not be bounded. Consider for instance $M(t)=t$: for
$t'\ge t\ge 1$,
\begin{equation*}
  t' =t+t'-t = t +\frac{t'-t}{tt'}tt'\le t
  +\frac{t'-t}{tt'}(t')^2,
\end{equation*}
that is,
\begin{equation*}
 M(t')\le M(t)+\(\int_{t}^{t'}\frac{dt}{t^2}\)M(t')^2. 
\end{equation*}
\end{remark}
We also need a Gronwall type argument:
\begin{lemma}\label{lem:Gronwall}
  Let $t_0\in \R$, $M\in C([t_0,\infty))$. Suppose that
  there exist $\eta>0$,  and $f\in
  L^p([t_0,\infty))$ with  $1\le
  p<\infty$, such that for all $t'\ge t\ge t_0$, 
  \begin{equation}
    \label{eq:increment2}
    M(t')\le M(t) + \|f\|_{L^{p}([t,t'])}^\eta  M(t').
  \end{equation}
Then $M$ is bounded, $M\in L^\infty([t_0,\infty))$.
\end{lemma}
\begin{proof}
  Split $[t_0,\infty)$ into finitely many sub-intervals $I_j$,
  \begin{equation*}
    [t_0,\infty)=\bigcup_{j=0}^NI_j,\quad I_j=[t_j,t_{j+1}),\ t_{N+1}=\infty,
  \end{equation*}
 on which
  $\|f\|_{L^p(I_j)}^\eta \le 1/2$. Then 
  \begin{equation*}
    \sup_{t\in I_j}M(t)\le 2 M(t_j),
  \end{equation*}
hence
\begin{equation*}
  \sup_{t\ge t_0}M(t)\le 2^{N+1}M(t_0).
\end{equation*}
\end{proof}
Now the goal is to use Strichartz inequalities and H\"older inequality
in order to obtain \eqref{eq:increment} with $h(t)=
\|u(t)\|_{Z}$ for some suitable $Z$, so that the existence of
such a function $f$ follows from the conservation of energy and
Morawetz estimates. Lemma~\ref{lem:Gronwall} is then applied to
$M(t)=\|Bu\|_{L^p([0,t];Z)}$, where $B\in
\{A_1,\dots,A_4\}$. Essentially, the nonlinear estimates of
Lemma~\ref{lem:bootstrap-global} become linear estimates as in
Lemma~\ref{lem:Gronwall} because all the vector-fields $A_j$ act like
first order derivatives on gauge invariant nonlinearities. Note that
in view of the conservation of energy (see
Proposition~\ref{prop:Cauchy}) and the identity
\eqref{eq:energy-rotation}, we may replace $h$ at the first step with
\begin{equation*}
  h(t) = \sum_{j=0}^3 \|A_j(t)u\|_{Z},
\end{equation*}
since the conservations of mass and energy yield an a priori bound in 
\begin{equation*}
  \Sigma_{\rm anis} = \{f\in L^2(\R^d),\quad xf,\nabla_x f,\nabla_y f\in
  L^2(\R^d)\}. 
\end{equation*}
In that case, only the operator $A_4$, which contains the large time decay
information, is left out. We will follow both approaches below. 
\smallbreak

At this stage, the strategy may appear relatively standard. The less
standard aspect is that the numerology associated to Strichartz
estimates in the present framework does not allow to follow directly
the above road map by working in $\ell^p_\gamma L^q(I_\gamma,L^r)$,
with $(p,q,r)$ as in Theorem~\ref{theo:strichartz}. Indeed, 
apply H\"older inequality to the quantity
\begin{equation*}
  \|\lvert u\rvert^{2\si}u\|_{\ell_\gamma^{p_1'}L^{q_1'}L^{r_1'}},
\end{equation*}
which appears after the use of inhomogeneous Strichartz estimates, to
bound it by 
\begin{equation*}
   \|u\|_{\ell_\gamma^{\theta}L^{\omega}L^{s}}^{(2\si+1)(1-\eta)}
\|u\|^{(2\si+1)\eta}_{\ell_\gamma^{p}L^{q}L^{r}}, 
\end{equation*}
where $\eta\in (0,1)$, and $(p_1,q_1,r_1)$, $(p,q,r)$ are triplets as in
Theorem~\ref{theo:strichartz}, with
\begin{align*}
  &1=\frac{1}{p_1}+\frac{(2\si+1)(1-\eta)}{\theta}+
  \frac{(2\si+1)\eta}{p},\\
&1=\frac{1}{q_1}+\frac{(2\si+1)(1-\eta)}{\omega}+
  \frac{(2\si+1)\eta}{q},\\
&1=\frac{1}{r_1}+\frac{(2\si+1)(1-\eta)}{s}+
  \frac{(2\si+1)\eta}{r}.
\end{align*}
Then since $d-n<d$, we necessarily have
$  \theta<\omega,$
unless $r_1=r=2$, a case where Strichartz estimates are not even
needed (energy estimate). Unless we proceed as in
Section~\ref{sec:wave}, which means unless we use Gagliardo-Nirenberg
inequality, we cannot claim that $u \in
\ell_\gamma^{\theta}L^{\omega}L^{s}$ with $\theta<\omega$, by invoking
only Morawetz estimates and the conservation of energy. To overcome
this issue, we shall use the Strichartz estimates from
Proposition~\ref{prop:generalization} (which, in our case, follow
essentially from \cite{TzVi12}, since the harmonic oscillator possesses
an eigenbasis): the above space $Z$ will be of the form $L^r_yL^2_x$,
or a more sophisticated version of it. 
 We prove Theorem~\ref{theo:ac} from the easiest case
to the most involved technically: if $d=3$ or $4$, a rather
straightforward estimate allows to apply
Lemmas~\ref{lem:bootstrap-global} and \ref{lem:Gronwall},
successively. When $d=2$, the same strategy works for $2<\si<4$: since
this does not cover the whole range of values for $\sigma$, we
directly present another approach, more in the spirit of
\cite{PTV-p}. 

\subsection{The case $d=4$}
\label{sec:ac4}

In addition to the global existence result stated in
Proposition~\ref{prop:Cauchy}, Proposition~\ref{prop:morawetz} yields
the information $u \in L^4_t L^4_{y}L^2_x$. Let 
$B\in \{{\rm Id}, A_1,A_2,A_3\}$. As recalled above, $Bu\in L^\infty_t
L^2_{x,y}$ from the conservation of the energy (see also
\eqref{eq:energy-rotation}). 
In view of Proposition~\ref{prop:generalization} and H\"older
inequality, we have, since $(\frac{10}{3},\frac{10}{3})$ is
$3$-admissible, for $t\in I$ any interval
\begin{align*} 
 \|Bu\|_{L^{10/3}_t L^{10/3}_y L^2_x} &\lesssim \|u_0\|_{\Sigma} + \|
 |u|^{2\si}Bu\|_{L^{10/7}_t L^{10/7}_y L^2_x} \\
& \lesssim \|u_0\|_{\Sigma} + \|
 u\|_{L^{5\si}_t L^{5\si}_y L^\infty_x}^{2\si}
 \|Bu\|_{L^{10/3}_t L^{10/3}_y L^2_x} .
\end{align*}
Since $n=1$, we invoke the standard estimate
\begin{equation*}
  \|f\|_{L^\infty(\R)}\le \sqrt 2 \|f\|^{1/2}_{L^2(\R)}\|\d_x
  f\|^{1/2}_{L^2(\R)},
\end{equation*}
to infer
\begin{equation*}
 \| u\|_{L^{5\si}_t L^{5\si}_y L^\infty_x} \lesssim 
\|u\|_{L^p_t L^p_y L^2_x}^{1/2}\|u\|^{1/2}_{L^{10/3}_tL^{10/3}_y
  H^1_x},\quad \text{where }\frac{1}{p}= \frac{4-3\si}{10\si}. 
\end{equation*}
For $\si\in (\frac{2}{3},1)$, we have
\begin{equation*}
  \frac{1}{10}<\frac{1}{p}<\frac{3}{10},
\end{equation*}
thus we can always interpolate
\begin{equation*}
  \|u\|_{L^p_t L^p_y L^2_x} \le \|u\|_{L^4_tL^4_yL^2_x}^\alpha
  \|u\|_{L^q_tL^q_y L^2_x}^{1-\alpha},
\end{equation*}
with $0<\alpha<1$, and where $q$ is either $10$ or $\frac{10}{3}$. The
value $10$ is motivated by the fact that the pair $(10,10)$ in
$3$-admissible at the level of $\dot
H^1$ (instead of $L^2$ so far), and so 
\begin{equation*}
  \|u\|_{L^{10}_tL^{10}_y L^2_x}\lesssim \|\nabla_y u\|_{L^{10}_tL^{30/13}_y L^2_x},
\end{equation*}
where the pair $(10,\frac{30}{13})$ is $3$-admissible. Letting
\begin{equation*}
h(t)= \sup_{(p,r)\
  3-\text{admissible}}\sum_{j=0}^3\|A_j(t)u\|_{L^r_y L^2_x}^p, 
\end{equation*}
we conclude that this function satisfies, for all interval $I$,
\begin{equation*}
  \|h\|_{L^1(I)}\lesssim 1+ \|u\|_{L^4(I;L^4_yL^2_x)}^\beta \|h\|_{L^1(I)}^\kappa,
\end{equation*}
for some $\beta,\kappa>0$. Lemma~\ref{lem:bootstrap-global} yields
\begin{equation*}
  u,A_1u,A_2u,A_3u \in L^p(\R;L^r_yL^2_x)
\end{equation*}
for all $3$-admissible pairs $(p,r)$. Now taking $B=A_4$ in the above
estimate, Lemma~\ref{lem:Gronwall} yields $A_4u \in
L^p(\R;L^r_yL^2_x)$. Theorem~\ref{theo:ac} follows for $d=4$, thanks
to the classical arguments recalled in Section~\ref{sec:wave}. 
\subsection{The case $d=3$}
\label{sec:ac3}

We now have the a priori information $|\nabla_y|^{1/2}\(|u|^2\)\in
L^2_tL^2_yL^1_x$. In view of Sobolev embedding, we infer
\begin{equation*}
  \|u\|_{L^4_tL^8_yL^2_x}^2 = \||u|^2\|_{L^2_tL^4_yL^1_x}\lesssim
  \left\| |\nabla_y|^{1/2}\(|u|^2\)\right\|_{L^2_tL^2_yL^1_x}<\infty. 
\end{equation*}
We then proceed as in the case $d=4$. Notice that $(4,4)$ is
$2$-admissible. For $B\in \{{\rm Id}, A_1,A_2,A_3\}$, 
\begin{align*} 
 \|Bu\|_{L^{4}_t L^{4}_y L^2_x} &\lesssim \|u_0\|_{\Sigma} + \|
 |u|^{2\si}Bu\|_{L^{4/3}_t L^{4/3}_y L^2_x} \\
& \lesssim \|u_0\|_{\Sigma} + \|
 u\|_{L^{4\si}_t L^{4\si}_y L^\infty_x}^{2\si}
 \|Bu\|_{L^{4}_t L^{4}_y L^2_x}. 
\end{align*}
We interpolate the nonlinear potential by
\begin{equation*}
  \|u\|_{L^{4\si}_t L^{4\si}_y L^\infty_x} \lesssim
  \|u\|^\theta_{L_t^{\frac{8\si}{2-\si}}L_y^{\frac{8\si}{2-\si}}L^2_x} 
\|u\|^{1-\theta}_{L^4_tL^4_y H^1_x},\quad \theta= \frac{2(\sigma-1)}{3\si-2}, 
\end{equation*}
and we note that $0<\theta<1/2$ since $1<\si<2$. We interpolate again to
introduce the quantity controlled by Morawetz estimate,
\begin{equation*}
  \|u\|_{L_t^{\frac{8\si}{2-\si}}L_y^{\frac{8\si}{2-\si}}L^2_x} \le 
\|u\|_{L^\infty_t L^{\frac{8(3\si-2)}{2-\si}}_y L^2_x}^{1-\alpha}
\|u\|_{L^4_tL^8_yL^2_x}^{\alpha},\quad \alpha = \frac{2-\si}{2\si}\in (0,1). 
\end{equation*}
Notice that $y\in \R^2$, so by Sobolev embedding,
\begin{equation*}
  \|u\|_{L^\infty_t L^{\frac{8(3\si-2)}{2-\si}}_y L^2_x}\lesssim
\|u\|_{L^\infty_t H^1_y L^2_x}\lesssim \|u_0\|_{\Sigma},
\end{equation*}
where the last inequality stems from Proposition~\ref{prop:Cauchy}. 
We can then conclude as in the case $d=4$, by considering first
\begin{equation*}
  h(t) = \sum_{j=0}^3\|A_j(t)u\|_{L^4_y L^2_x}^4,
\end{equation*}
hence Theorem~\ref{theo:ac} in the case $d=3$. 
\subsection{The case $d=2$}
\label{sec:ac2}

In this case, instead of working mainly with an $L^2$ regularity in
$x$, we consider a Banach algebra. Since $x$ is associated to a
harmonic oscillator, it is not sensible to require only a Sobolev type
regularity of the form $H^s_x$, since the harmonic oscillator rotates
the phase space (see e.g. \cite{CaCauchy}). Instead, we consider the
domain of the fractional harmonic oscillator: for $s\ge 0$ a real
number, let
\begin{equation*}
  \Sigma^s_x = \left\{ f\in L^2(\R),\quad
  \|f\|_{\Sigma^s_x}:=\|f\|_{L^2(\R)}+\left\|
    \(-\d_x^2+x^2\)^{s/2}f\right\|_{L^2(\R)}<\infty\right\}. 
\end{equation*}
We know from \cite[Theorem~2.1]{BCM08} that (for any nonnegative real
number $s$) 
the above norm enjoys the following equivalence,
\begin{equation*}
  \|f\|_{\Sigma^s_x}\sim \|f\|_{L^2(\R)} + \|f\|_{\dot H^{s}(\R)} +
  \|\lvert x\rvert^{s}f\|_{L^2(\R)}.
\end{equation*}
In particular, $\Sigma^s_x$ is a Banach algebra as soon as $s>1/2$. 
\begin{lemma}
    Let $d\ge 2$, $1\le n\le d-1$, and $0\le s\le 1$.
Then for all $u_0\in L^2_y(\R^{d-n};\Sigma_x^s)$, all $(d-n)$-admissible pairs
$(p_1,r_1)$ and $(p_2,r_2)$, there exists $C_{r_1,r_2}$ such that for all
interval $I\ni 0$, the
solution to 
\begin{equation*}
   i\d_t u  =Hu +F, \quad u_{\mid t=0}=u_0,
\end{equation*}
satisfies:
  \begin{align*}
    \|u\|_{L^{p_1}_t(I; L^{r_1}_y \Sigma^s_x)}  \le
    C_{r_1,r_2}\(\|u_0\|_{L^2_y\Sigma_x^s}+ \|F\|_{L^{p_2'}_t (I;L^{r_2'}_y 
      \Sigma^s_x)}\). 
  \end{align*}
\end{lemma}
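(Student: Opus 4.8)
The plan is to reduce this Strichartz-type estimate with values in the Banach space $\Sigma^s_x$ to the scalar-valued Strichartz estimates of Proposition~\ref{prop:generalization} (or, equivalently here, of Proposition~\ref{prop:striTV}), by observing that the norm $\|\cdot\|_{\Sigma^s_x}$ can be realized via operators acting only on the $x$ variable that commute with the free propagator $e^{-itH}$. Concretely, I would write $\Lambda_s := (-\partial_x^2 + x^2)^{s/2}$ and note that, since $H = H_1 + H_2$ with $H_1 = -\frac12\Delta_x + \frac{|x|^2}{2}$ acting on $x$ only and $H_2 = -\frac12\Delta_y$ on $y$ only, the operator $\Lambda_s$ is a function of $H_1$ and hence commutes with both $H_1$ and $H_2$, so $[\Lambda_s, e^{-itH}] = 0$. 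Therefore $v := \Lambda_s u$ solves the same equation $i\partial_t v = Hv + \Lambda_s F$ with data $\Lambda_s u_0$, and $\|u(t)\|_{L^{r}_y \Sigma^s_x} \sim \|u(t)\|_{L^r_y L^2_x} + \|\Lambda_s u(t)\|_{L^r_y L^2_x}$.

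The key steps, in order, are: (1) Apply Proposition~\ref{prop:generalization} with $X^n = \R^n$ and $P = -\frac12\Delta_x + \frac{|x|^2}{2}$ (scaled appropriately to match the normalization $i\partial_t u = Hu + F$; the harmonic oscillator flow is bounded — in fact unitary — on $L^2(\R^n)$), obtaining
\begin{equation*}
  \|u\|_{L^{p_1}_t(I; L^{r_1}_y L^2_x)} \le C_{r_1,r_2}\(\|u_0\|_{L^2_{x,y}} + \|F\|_{L^{p_2'}_t(I; L^{r_2'}_y L^2_x)}\)
\end{equation*}
for all $(d-n)$-admissible pairs. (2) Apply the very same estimate to $v = \Lambda_s u$, which solves the analogous equation with source $\Lambda_s F$ and data $\Lambda_s u_0$; this is legitimate precisely because of the commutation $[\Lambda_s, H] = 0$. (3) Add the two estimates and use the equivalence $\|g\|_{L^{r}_y \Sigma^s_x} \sim \|g\|_{L^r_y L^2_x} + \|\Lambda_s g\|_{L^r_y L^2_x}$, valid uniformly in $y$ by the pointwise-in-$y$ definition of the mixed norm, to conclude
\begin{equation*}
  \|u\|_{L^{p_1}_t(I; L^{r_1}_y \Sigma^s_x)} \le C_{r_1,r_2}\(\|u_0\|_{L^2_y \Sigma^s_x} + \|F\|_{L^{p_2'}_t(I; L^{r_2'}_y \Sigma^s_x)}\).
\end{equation*}
A small remark is needed on why $I$ may be any interval containing $0$ rather than $\R_+$: Proposition~\ref{prop:generalization} is stated for $\R_+$, but since $e^{-itH}$ is a bona fide unitary group on $L^2(\R^d)$ one has the same estimate on $\R_-$ by time reversal, and the local (finite-interval) version follows from the global one on $\R$ by the Christ--Kiselev lemma together with interpolation among admissible pairs, exactly as in the proof of Theorem~\ref{theo:strichartz}.

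I do not expect a genuine obstacle here; the only point requiring a little care is the commutation $[\Lambda_s, e^{-itH}] = 0$ and the fact that $\Lambda_s$ maps the relevant function spaces into themselves so that $\Lambda_s F$ makes sense — but $\Lambda_s = (-\partial_x^2+x^2)^{s/2}$ is defined by functional calculus on $L^2(\R^n)$, commutes with $H_1$ by construction and with $\Delta_y$ because it acts on a different variable, hence with $H = H_1 + H_2$ and with the group it generates. The role of the restriction $0 \le s \le 1$ is only to invoke the norm equivalence from \cite[Theorem~2.1]{BCM08} (and, downstream, to keep $\Sigma^s_x$ within the range where the Gagliardo--Nirenberg/Banach-algebra properties used in Section~\ref{sec:ac2} hold); it plays no structural role in the proof of the estimate itself, which would go through for any $s \ge 0$ for which $\Lambda_s u_0$ and $\Lambda_s F$ lie in the relevant spaces.
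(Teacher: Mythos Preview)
Your proof is correct, but it takes a different route from the paper's. The paper establishes the case $s=0$ from Proposition~\ref{prop:generalization}, then the case $s=1$ by applying the time-dependent vector fields $A_1(t),A_2(t)$ (which commute with $i\partial_t-H$) and using the pointwise identity \eqref{eq:energy-rotation} to convert $\|A_1(t)u\|_{L^2_x}^2+\|A_2(t)u\|_{L^2_x}^2$ back into $\|xu\|_{L^2_x}^2+\|\nabla_x u\|_{L^2_x}^2$, and finally interpolates for $0<s<1$. Your argument bypasses both the vector fields and the interpolation by observing directly that $\Lambda_s=(2H_1)^{s/2}$ is a function of $H_1$, hence commutes with $H$ and with $e^{-itH}$, so that the $s=0$ estimate applied to $\Lambda_s u$ gives the full range $s\ge 0$ in one stroke. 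This is cleaner for the lemma at hand; the paper's approach, on the other hand, reuses the machinery of Section~\ref{sec:Z} and makes the link with the operators $A_j$ explicit. One small correction to your closing remark: the restriction $0\le s\le 1$ in the statement is not tied to the norm equivalence of \cite{BCM08} (which the paper notes holds for every $s\ge 0$) but is simply an artifact of the paper's endpoint-plus-interpolation proof; your approach shows the estimate for all $s\ge 0$, as you rightly point out.
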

\begin{proof}
  For $s=0$, the result is a particular case of
  Proposition~\ref{prop:striTV} (or
  Proposition~\ref{prop:generalization}). For $s=1$, apply the
  vector-fields $A_1$ and $A_2$ to obtain the result from the case
  $s=0$ and the identity \eqref{eq:energy-rotation}. The case $0<s<1$
  follows by interpolation. 
\end{proof}
Morawetz estimate yields $|u|^2\in L^2_t \dot H^1_y L^1_x$. In view of
the one-dimensional inequality
\begin{equation*}
  \|f\|_{L^\infty_y} \lesssim \|f\|_{L^1_y}^{1/3}\|\d_y f\|_{L^2_y}^{2/3},
\end{equation*}
we infer the analogue of the property established in \cite{PlVe09},
$u\in L^6_t L^\infty_y L^2_x$. 

On the other hand, the conservation of mass and energy yields
\begin{equation*}
  u\in L^\infty_t H^{1/2-\alpha}_y \Sigma_x^{1/2+\alpha},\quad 
  0\le \alpha\le \frac{1}{2}.
\end{equation*}
Sobolev embedding yields ($d-n=1$)
\begin{equation*}
  u\in L^\infty_t L^{1/\alpha}_y \Sigma_x^{1/2+\alpha},\quad 
  0\le \alpha\le \frac{1}{2}.
\end{equation*}
By interpolation, we infer
\begin{equation*}
  u\in L^{6/\alpha}_t L^{1/(\alpha(1-\alpha))}_y
  \Sigma_x^{1/2+\delta},\quad 0<\alpha<\frac{1}{2},\quad \delta =
  \(1-\alpha\)\(\frac{1}{2}+\alpha\)-\frac{1}{2}>0. 
\end{equation*}
For $\alpha\in (0,1/2)$ to be fixed, and $(p,r)$ a $1$-admissible pair
to fix too, write
\begin{equation*}
  \| |u|^{2\si}u\|_{L^{p'}_t L^{r'}_y \H_x}\lesssim
  \|u\|^{(2\si+1)\eta}_{L^p_t L^r_y \H_x} \|u\|^{(2\si+1)(1-\eta)}_{
    L^{6/\alpha}_t L^{1/(\alpha(1-\alpha))}_y \H_x},
\end{equation*}
where $\H_x = \Sigma_x^{1/2+\delta}$, and $\eta\in (0,1)$. We have
used H\"older inequality, with 
\begin{align}
  1&= \frac{(2\si+1)\eta+1}{p} + (2\si+1)(1-\eta)\frac{\alpha}{6} \notag\\
1&= \frac{(2\si+1)\eta+1}{r} + (2\si+1)(1-\eta)\alpha(1-\alpha). \label{eq:r}
\end{align}
Recalling that $(p,r)$ is $1$-admissible, we infer that necessarily,
\begin{equation*}
  \frac{5}{4} =
  \frac{(2\si+1)\eta}{4}+(2\si+1)(1-\eta)\alpha
\(\frac{2}{3}-\frac{\alpha}{2}\). 
\end{equation*}
Now let $0<\eps\ll 1$, and define $\eta$ by the identity
\begin{equation*}
  5 = (2\si+1)\eta+\eps . 
\end{equation*}
Since $\si>2$, for $\eps$ sufficiently small, this defines indeed
$\eta\in (0,1)$. Then, up to decreasing $\eps$, fix $0<\alpha<1/2$ so
that
\begin{equation*}
  (2\si+1)(1-\eta)\alpha
\(\frac{2}{3}-\frac{\alpha}{2}\)=\eps. 
\end{equation*}
To define all the parameters, we now recall that $r$ is given by
\eqref{eq:r}: we check that $2<r<6$, with $r\approx 6$. Once all
these parameters are fixed, Strichartz inequality yields
\begin{equation*}
  \|u\|_{L^p_t L^r_y \H_x} \lesssim \|u_0\|_{\Sigma} + \|u\|^{2\si-4+\eps}_{
    L^{6/\alpha}_t L^{1/(\alpha(1-\alpha))}_y \H_x}
\|u\|^{5-\eps}_{L^p_t L^r_y \H_x} .
\end{equation*}
Following the same argument as in other dimensions, we infer
$  u\in L^p(\R; L^r_y\H_x)$, and thus $u\in L^{p_1}(\R;L^{r_1}_y
\H_x)$ for all $1$-admissible pairs $(p_1,r_1)$. 
\smallbreak

For $B\in \{A_0,\dots,A_4\}$, we also have, on any interval
$I=[t_0,t_1]$, 
\begin{align*}
  \|B u\|_{L^p_t L^r_y L^2_x} &\lesssim \|B(t_0)u\|_{L^2} + \|u\|^{2\si-4+\eps}_{
    L^{6/\alpha}_t L^{1/(\alpha(1-\alpha))}_y L^\infty_x}
\|u\|^{4-\eps}_{L^p_t L^r_y L^\infty_x}\|B u\|_{L^p_t L^r_y L^2_x}\\
  &\lesssim \|B(t_0)u\|_{L^2} + \|u\|^{2\si-4+\eps}_{
    L^{6/\alpha}_t L^{1/(\alpha(1-\alpha))}_y \H_x}
\|u\|^{4-\eps}_{L^p_t L^r_y \H_x}\|B u\|_{L^p_t L^r_y L^2_x},
\end{align*}
where we have used the embedding $\H_x\subset L^\infty_x$. Invoking
Lemma~\ref{lem:Gronwall}, we conclude $Bu \in L^p(\R; L^r_y L^2_x)$,
hence $Bu\in L^{p_1}(\R;L^{r_1}_y
L^2_x)$ for all $1$-admissible pairs
$(p_1,r_1)$. Theorem~\ref{theo:ac} then follows in the case $d=2$.

\appendix

\section{Existence of long range effects: formal proof}
\label{sec:long}

\begin{lemma}
 Let $f\in L^2(\R^n)$ and $g\in L^2(\R^{d-n})$. Consider the solution
 $u$ to
 \begin{equation}
    \label{eq:tensor}
    i\d_t u = Hu,\quad u(0,x,y)= f(x)g(y). 
  \end{equation}
Then as $t\to +\infty$, 
\begin{equation*}
  u(t,x,y) = e^{itH_1}f(x) \otimes\frac{1}{(it)^{(d-n)/2}}\F_2
  g\(\frac{y}{t}\)e^{i\frac{|y|^2}{2t}} +o(1) \text{ in }L^2(\R^d),
\end{equation*}
where $\F_2$ stands for the partial Fourier transform with respect to
$y$. 
\end{lemma}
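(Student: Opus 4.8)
The plan is to reduce the statement to the tensor structure of the linear flow. Since $H=H_1+H_2$, where $H_1=-\tfrac12\Delta_x+\tfrac{|x|^2}{2}$ acts only on the $x$ variable and $H_2=-\tfrac12\Delta_y$ acts only on the $y$ variable, the two operators commute, and for the separated Cauchy datum $u(0,x,y)=f(x)g(y)$ the unique solution of \eqref{eq:tensor} in $C(\R;L^2(\R^d))$ is
\begin{equation*}
  u(t,x,y)=e^{-itH}\(f\otimes g\)(x,y)=\(e^{-itH_1}f\)(x)\,\(e^{-itH_2}g\)(y),
\end{equation*}
the factorization already used in Sections~\ref{sec:strichartz} and \ref{sec:wave}. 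Here $e^{-itH_1}$ is unitary on $L^2(\R^n)$ --- this is the content of Mehler's formula, the Maslov phase shifts at $t\in\pi\Z$ included --- and $e^{-itH_2}$ is unitary on $L^2(\R^{d-n})$; in particular $u\in C(\R;L^2(\R^d))$ with $\|u(t)\|_{L^2(\R^d)}=\|f\|_{L^2(\R^n)}\|g\|_{L^2(\R^{d-n})}$.

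The only genuinely analytic ingredient is the large-time behaviour of the fully dispersive factor. Since $H_2=-\tfrac12\Delta_y$, the classical asymptotics of the free Schr\"odinger group recalled in the Introduction, with $d$ replaced by $d-n$, give
\begin{equation*}
  \left\|e^{-itH_2}g-\frac{1}{(it)^{(d-n)/2}}\,\F_2 g\(\frac{\cdot}{t}\)\,e^{i|\cdot|^2/(2t)}\right\|_{L^2(\R^{d-n})}\Tend t{+\infty}0 .
\end{equation*}
I would prove this in the usual soft way: both $e^{-itH_2}$ and the operator $g\mapsto(it)^{-(d-n)/2}\F_2 g(\cdot/t)e^{i|\cdot|^2/(2t)}$ are isometries of $L^2(\R^{d-n})$ --- the latter being a composition of the Fourier transform, an $L^2$-preserving dilation, and multiplication by a unimodular function --- so by density it is enough to verify the convergence for $g\in\Sch(\R^{d-n})$. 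For such $g$ one writes $e^{-itH_2}g$ via its explicit Gaussian kernel, factors out the quadratic phase $e^{i|y|^2/(2t)}$, and identifies the remaining integral with $\F_2 g(y/t)$ up to an error that vanishes by dominated convergence.

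It remains to upgrade this from a statement about the $y$-factor to one in the full $L^2(\R^d)$ norm, and this is the only step where a little care is needed, since $o(1)$ in $L^2_y$ for each fixed $x$ would not by itself give $o(1)$ in $L^2(\R^d)$. The tensor structure settles it: writing $w(t,y):=(it)^{-(d-n)/2}\F_2 g(y/t)e^{i|y|^2/(2t)}$, one has
\begin{equation*}
  u(t)-\(e^{-itH_1}f\)\otimes w(t)=\(e^{-itH_1}f\)\otimes\(e^{-itH_2}g-w(t)\),
\end{equation*}
and hence, by Fubini together with the $L^2(\R^n)$-unitarity of $e^{-itH_1}$,
\begin{equation*}
  \left\|u(t)-\(e^{-itH_1}f\)\otimes w(t)\right\|_{L^2(\R^d)}=\|f\|_{L^2(\R^n)}\left\|e^{-itH_2}g-w(t)\right\|_{L^2(\R^{d-n})}\Tend t{+\infty}0,
\end{equation*}
which is exactly the asserted expansion (with the $x$-factor being $e^{-itH_1}f$). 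There is no real obstacle here: once the commuting tensor structure of $H$ is used to factor the flow and the stationary-phase asymptotics for $e^{it\Delta/2}$ are invoked in dimension $d-n$, the argument is purely bookkeeping.
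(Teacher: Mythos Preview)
Your proof is correct and follows exactly the paper's approach: the paper's own proof is a single sentence, namely that the lemma is straightforward since $u(t,x,y)=e^{itH_1}f(x)\otimes e^{itH_2}g(y)$, implicitly relying on the free-Schr\"odinger asymptotics recalled in Section~\ref{sec:heu
ristics}. You have simply made explicit the two ingredients the paper leaves to the reader --- the $L^2$ asymptotics of $e^{-itH_2}$ in dimension $d-n$, and the tensor bookkeeping that upgrades this to $L^2(\R^d)$ via the unitarity of $e^{-itH_1}$ --- and your parenthetical remark about the sign of the exponent is a fair observation about the paper's own notation.
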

The lemma is straightforward, since 
\begin{equation*}
  u(t,x,y) = e^{itH_1}f(x)\otimes e^{itH_2}g(y). 
\end{equation*}
 Following the formal argument given in \cite{Ginibre},
  the above lemma suggests that for $\sigma\le 1/(d-n)$, long
  range effects are present in \eqref{eq:NLSP}. Assume $\lambda=1$ for
  simplicity. Let $u\in
  C([T,\infty);L^2(\R^d))$ be a solution of \eqref{eq:NLSP}
  such that there exists $u_+\in L^2(\R^d)$ with
  \begin{equation*}
 u_+(x,y)=f(x)g(y)\text{ and }   \left\lVert u(t)- e^{-it
     H}u_+\right\rVert_{L^2}\Tend t 
    {+\infty}0.
  \end{equation*}
Formal computations indicate that necessarily, $u_+\equiv 0$ and
$u\equiv 0$: the linear and nonlinear dynamics are no longer
comparable, due to long range effects. To see this, let $\psi=
\psi_1\otimes\psi_2$ with $\psi_1
\in C_0^\infty(\R^n)$, $\psi_2\in C_
0^\infty(\R^{d-n})$, and $t_2\ge
t_1\ge T$. By assumption,  
  \begin{equation*}
    \left\langle \psi,
      e^{it_2H}u(t_2)-e^{it_1H}u(t_1)\right\rangle= 
    -i\int_{t_1}^{t_2}\left\langle 
    e^{-itH}\psi, \left(|u|^{2\sigma}u\right)(t)\right\rangle dt
  \end{equation*}
goes to zero as $t_1,t_2\to +\infty$. The above lemma
implies that for $t\to +\infty$, we have
\begin{equation*}
  \left\langle 
    e^{-itH}\psi, \left(|u|^{2\sigma}u\right)(t)\right\rangle \approx
    \frac{1}{t^{(d-n)(\sigma+1)}} \int_{\R^n}F(t,x)dx
    \int_{\R^{d-n}}G\(\frac{y}{t}\)  dy,
\end{equation*}
for $F=e^{-itH_1} \psi_1 |e^{-itH_1}f|^{2\si}\overline{e^{-itH_1}f}$ and $G = \F_2
  \psi_2 |\F_2 g|^{2\si}\overline{\F_2 g}$. With the change of
  variable $y\mapsto t y$, the above 
  integral is equal to 
  \begin{equation*}
   \frac{1}{t^{(d-n)\sigma}} \(\int_{\R^n}F(t,x)dx\)
   \( \int_{\R^{d-n}}G(y) dy\).
  \end{equation*}
Now $F$ is periodic in time, with arbitrary mean value. But
$t\mapsto 1/t^{(d-n)\sigma}$ is not integrable, so the above quantity
is not integrable in time, unless 
$u_+= 0$. The 
conservation of mass then implies $u\equiv 0$.

\noindent {\bf Acknowledgments}. P. Antonelli and J. Drumond Silva
would like to thank the kind 
hospitality of the Institute of Mathematics at the Universit\'e
Montpellier 2, where part of this work was developed. The authors are
grateful to Nicola Visciglia for fruitful discussions.

\bibliographystyle{siam}
\bibliography{biblio}

\end{document}